\newcommand{\Aa}{\mathbb A}
\newcommand{\CC}{\mathbb C}
\newcommand{\PP}{\mathbb P}
\newcommand{\ZZ}{\mathbb Z}
\newcommand{\mcE}{{\mathcal E}}
\newcommand{\mcH}{\mathcal{H}}
\newcommand{\mcJ}{\mathcal{J}}
\newcommand{\mcL}{\mathcal{L}}
\newcommand{\mcM}{\mathcal{M}}
\newcommand{\mcN}{\mathcal{N}}
\newcommand{\mcO}{\mathop {\mathcal O}\nolimits}
\newcommand{\gtA}{\mathfrak{A}}
\newcommand{\gtB}{\mathfrak{B}}
\newcommand{\gtI}{\mathfrak{I}}
\newcommand{\gtU}{\mathfrak{U}}
\newcommand{\HH}{H}
\newcommand{\Ima}{\mathop {\rm Im}\nolimits}
\newcommand{\GL}{\mathop {\rm GL}\nolimits}
\newcommand{\PGL}{\mathop {\rm PGL}\nolimits}
\newcommand{\Pic}{\mathop {\rm Pic}\nolimits}
\newcommand{\Proj}{\mathop {\rm \bf Proj}\nolimits}
\newcommand{\Sing}{\mathop {\rm Sing}\nolimits}
\newcommand{\codim}{\mathop {\rm codim}\nolimits}
\newcommand{\Aut}{\mathop {\rm {Aut}}\nolimits}
\newcommand{\id}{\mathop {{\rm id}}\nolimits}
\newcommand{\pic}{\mathop{\rm Pic}\nolimits}
\newcommand{\wcl}{\mathrm{Cl}}
\newcommand{\Gr}{\mathrm{Gr}}
\newcommand{\natu}{ {\star} }
\newcommand{\ad}{\mathrm{AD}}
\newcommand{\sbpic}{\mathrm{sPic}}
\newcommand{\Eu}{\mathop \mathrm{Eu}\nolimits}
\newcommand{\gd}{\mathop \mathrm{lcd}\nolimits}
\newcommand{\den}{\mathop \mathrm{D}\nolimits}
\newcommand{\neu}{\mathop \mathrm{N}\nolimits}
\newcommand{\quot}{\mathop \mathrm{quot}\nolimits}
\newcommand{\rev}{}
\newtheorem{thm}{Theorem}[section] 
\newtheorem{Thm}[thm]{Theorem}
\newtheorem{lem}[thm]{Lemma}     
\newtheorem{cor}[thm]{Corollary}
\newtheorem{prop}[thm]{Proposition}
\newtheorem{conj}[thm]{Conjecture}
\theoremstyle{definition}
\newtheorem{defin}[thm]{Definition}
\newtheorem{rem}[thm]{Remark}
\newtheorem{ex}[thm]{Example}
\newtheorem{prob}[thm]{Problem}
\begin{document}

\title{Double covers and vector bundles of rank two}
\author{Taketo Shirane\thanks{
Partially supported by Grant-in-Aid for Scientific Research C (21K03182).
}
}
\affil{\small Department of Mathematical Sciences, Faculty of Science and Technology, Tokushima University, 2-1 Minamijyousanjima-cho, Tokushima 770-8506, JAPAN. \\ \textit{E-mail address:} {\tt shirane@tokushima-u.ac.jp}, ORCID iD:  0000-0002-4531-472X.}

\keywords{double covers, Picard groups, rank two vector bundles.}
\amssubj{primary: 14E20, secondary: 14C22, 14J60.}
\date{
}
\maketitle

\begin{abstract}
In 2017, Catanese--Perroni gave a natural correspondence between the Picard group of a double cover and a set of pairs of a $2$-bundle and a certain morphism of $2$-bundles on the base space. 
In this paper, we describe the group structure of the latter set induced from the Picard group in terms of transition functions of $2$-bundles. 
This study is derived from the study of embedded topology of plane curves. 
It also proposes approaches to the study of Picard groups of double covers, and to the construction of $2$-bundles. 
\end{abstract}

\section{Introduction}
In the study of the embedded topology of curves on the complex projective plane $\PP^2$, it is effective to consider the irreducibility of $\phi^\ast C$ for an irreducible curve $C\subset\PP^2$ and a Galois cover $\phi:X\to\PP^2$ (cf. \cite{bannai2016}, \cite{shirane2017}, \cite{shirane2019}). 
For example, let $B,C\subset\PP^2$ be two plane curves such that $\deg B$ is even and $C$ is irreducible with $\deg B\ne\deg C$, and let $\phi:X\to \PP^2$ be the double cover branched at $B$; 
then the embedded topology of $B+C$ changes depending on whether $\phi^\ast C$ is irreducible or not. 
In the case where $\phi$ is a cyclic cover and $C$ is smooth, a criterion for irreducibility of $\phi^\ast C$ is known in \cite{benshi2017}. 
This criterion is intensively used to distinguish embedded topology of plane curves (cf. \cite{abst}, \cite{abst_flex}, \cite{bgst2017}, \cite{shirane2019}). 
In the case where $\phi$ is the double cover branched at a smooth conic and $C$ is a nodal curve, a criterion for irreducibility of $\phi^\ast C$ is known in \cite{banshi2019}. 
However, for general $C\subset\PP^2$, it is still a problem to determine the irreducibility of $\phi^\ast C$ even if $\phi$ is a double cover. 
We consider a new approach to the problem, which is constructing various curves on $X$ (which correspond to irreducible components of $\phi^\ast C$) and studying property of their images (which correspond to $C$). 
The main aim of this paper is a preparation for this new approach by studying a correspondence between line bundles on $X$ and vector bundles of rank $2$ (say \textit{$2$-bundles} for short) on $Y$ in the case where $\phi$ is a double cover. 
As bi-products, we obtain approaches to studying the Picard group of double covers and to constructing $2$-bundles. 

In this paper, we consider a \textit{non-singular double cover} $\phi:X\to Y$ which is a finite surjective morphism of degree $2$ between non-singular varieties $X$ and $Y$ of any dimension over $\CC$. 
Catanese--Perroni \cite{catper2017} gave a natural correspondence between $\Pic(X)$ and a set of pairs of a $2$-bundle on $Y$ and a certain morphism of $2$-bundles on $Y$ (we call such pairs \textit{admissible pairs} for $\phi$). 
Hence the latter set has a group structure induced by that of $\Pic(X)$. 
Our aim is to give explicit formulas for the group structure in terms of transition functions of $2$-bundles. 
In particular, we give a criterion if $\phi_\ast\mcL$ splits (i.e., a direct summand of two line bundles on $Y$). 

A non-singular double cover $\phi:X\to Y$ is determined by the branch locus $B_\phi\subset Y$ and a divisor $L$ on $Y$ such that $2L\sim B_\phi$. 
However the relation between the pair $(B_\phi,L)$ and $\pic(X)$ is unknown. 
We will define a subgroup $\sbpic_\phi(X)$ of $\pic(X)$ generated by line bundles $\mcL$ such that $\phi_\ast\mcL$ splits. 
We can guess from the condition for $\phi_\ast\mcL$ to split that the structure of $\sbpic_\phi(X)$ strongly relate with the embedding $B_\phi\subset Y$ (this relation is still a problem). 
The author conjectures that the equality $\sbpic_\phi(X)=\pic(X)$ holds. 
In fact, the equality holds in several simple cases. 
Recently, it is proved in \cite{shirane2021} that the equality holds if $Y$ is isomorphic to the projective space $\PP^n$. 

From the view point of $2$-bundles, the group structure of $\pic(X)$ gives us a method for constructing $2$-bundles on $Y$, i.e., we can compute the transition functions of the $2$-bundle $\phi_\ast(\mcL_1^{\pm1}\otimes\dots\otimes\mcL_m^{\pm1})$ if line bundles $\mcL_k$ are known. 
Schwarzenberger \cite{schwarzenberger1961} proved that, for any $2$-bundle $\mcE$ on a smooth surface $Y$, there exist a non-singular double cover $\phi:X\to Y$ and a line bundle $\mcL$ on $X$ such that $\phi_\ast\mcL\cong\mcE$ (\cite[Theorem~3]{schwarzenberger1961}). 
In Section~\ref{sec:normal}, we will show a generalization of \cite[Theorem~3]{schwarzenberger1961} (Theorem~\ref{thm:push}). 
From this generalization, we can also expect that any $2$-bundle on a smooth variety can be constructed by our method. 

The main theorems of this paper are Theorems~\ref{thm:group_law0}, \ref{thm:split} and \ref{thm:push} below. 
In Theorems~\ref{thm:group_law0} and \ref{thm:split}, let $\phi:X\to Y$ be a non-singular double cover with $\phi_\ast\mcO_X\cong\mcO_Y\oplus\mcO_Y(-L)$ for a divisor $L$ on $Y$, and let $\iota:X\to X$ be the covering transformation of $\phi$. 
Theorem~\ref{thm:group_law0} describes the group structure of $\pic(X)$ in terms of $2$-bundles on $Y$.  

\begin{Thm}\label{thm:group_law0}
	Let $\mcL_k$ $(k=1,\dots,m)$ be $m$ line bundles on $X$. 
	Then there exist an affine open covering $\{U_i\}_{i\in I}$ of $Y$ and $K_{ij}^{(k)+}, K_{ij}^{(k)-}\in\GL(2,\mcO_{U_i\cap U_j})$ for any $i,j\in I$ and $k=1,\dots,m$ such that 
	the matrices 
	\[ \left(K_{ij}^{(1)}(n_1)\right)^{|n_1|}\dots\left(K_{ij}^{(m)}(n_m)\right)^{|n_m|}
	\begin{pmatrix}
	1 & 0 \\ 0 & \xi_{ij}
\end{pmatrix} 
\in \GL(2,\mcO_{U_i\cap U_j}) \quad (i,j\in I)
 \]
form transition functions of $\phi_\ast(\mcL_1^{n_1}\otimes\dots\otimes\mcL_m^{n_m})$ for any $n_1,\dots,n_m\in\ZZ$,
where $K_{ij}^{(k)}(n_k):=K_{ij}^{(k)+}$ if $n_k\geq0$, $K_{ij}^{(k)}(n_k):=K_{ij}^{(k)-}$ if $n_k<0$, and $\xi_{ij}\in\Gamma(U_i\cap U_j,\mcO_Y^\times)$ correspond to transition functions of $\mcO_Y(-L)$. 
Moreover, the explicit formulas for $K_{ij}^{(k)+}$ and $K_{ij}^{(k)-}$ are given in Theorem~\ref{thm:group_law}. 
\end{Thm}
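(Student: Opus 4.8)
The plan is to reduce the statement to an explicit local computation of transition functions, exploiting the fact that pushing forward along a degree-two map realizes multiplication in $\mcO_X$ as the regular representation of a rank-two free $\mcO_Y$-module. First I would fix an affine open covering $\{U_i\}_{i\in I}$ of $Y$ fine enough that: (i) $\mcO_Y(-L)$ is trivialized on each $U_i$, with the trivializations glued by the units $\xi_{ij}\in\Gamma(U_i\cap U_j,\mcO_Y^\times)$; (ii) over each $U_i$ the cover has the standard square-root form $\phi^{-1}(U_i)=\mathrm{Spec}\,\mcO_{U_i}[w_i]/(w_i^2-f_i)$ for some $f_i\in\Gamma(U_i,\mcO_Y)$ cutting out $B_\phi$, where $\iota$ acts by $w_i\mapsto -w_i$ and the local roots are glued by $w_i=\xi_{ij}\,w_j$ (this is exactly how $\mcO_Y(-L)$ enters); and (iii) every $\mcL_k$ is trivial on $\phi^{-1}(U_i)$, generated by some $e_i^{(k)}$ with transition units $g_{ij}^{(k)}\in\Gamma(\phi^{-1}(U_i\cap U_j),\mcO_X^\times)$. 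Under (ii) the module $\phi_\ast\mcO_X$ is free on $\{1,w_i\}$, and more generally $\phi_\ast\mcL_k$ is free on $\{e_i^{(k)},\,w_i e_i^{(k)}\}$.

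The key algebraic step is to record multiplication by a unit in this basis. Writing a transition unit as $g=a+b\,w_j$ with $a,b\in\Gamma(U_i\cap U_j,\mcO_Y)$ (its $\iota$-invariant and $\iota$-anti-invariant parts), multiplication by $g$ on the free module with basis $\{1,w_j\}$ is represented, using $w_j^2=f_j$, by $M(g)=\begin{pmatrix} a & b f_j \\ b & a\end{pmatrix}$. The assignment $g\mapsto M(g)$ is a ring homomorphism, so $M(g^n)=M(g)^n$ and $M(gh)=M(g)M(h)$; moreover $\det M(g)=a^2-b^2f_j$ is the norm of $g$, a unit in $\mcO_Y$ because $g$ is a unit in $\mcO_X$, whence $M(g)\in\GL(2,\mcO_{U_i\cap U_j})$ with $M(g)^{-1}=M(g^{-1})$. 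Expressing $\{e_i^{(k)},w_i e_i^{(k)}\}$ in terms of $\{e_j^{(k)},w_j e_j^{(k)}\}$ via $e_i^{(k)}=g_{ij}^{(k)}e_j^{(k)}$ and $w_i=\xi_{ij}w_j$ then yields, after a short computation, that the transition matrix of $\phi_\ast\mcL_k$ equals $D_{ij}\,M(g_{ij}^{(k)})^{\mathsf{T}}$, where $D_{ij}=\begin{pmatrix}1 & 0 \\ 0 & \xi_{ij}\end{pmatrix}$.

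With this in hand I would set $K_{ij}^{(k)+}:=D_{ij}\,M(g_{ij}^{(k)})^{\mathsf{T}}\,D_{ij}^{-1}$ and $K_{ij}^{(k)-}:=D_{ij}\,M((g_{ij}^{(k)})^{-1})^{\mathsf{T}}\,D_{ij}^{-1}=(K_{ij}^{(k)+})^{-1}$; these are the matrices whose explicit entries are recorded in Theorem~\ref{thm:group_law}. The assembly is then a telescoping conjugation: since $\mcL_1^{n_1}\otimes\dots\otimes\mcL_m^{n_m}$ has transition units $\prod_k (g_{ij}^{(k)})^{n_k}$ and $M$ is a homomorphism into a commuting family (as $\mcO_X$ is commutative), the transition matrix of its pushforward is $D_{ij}\,\big(\prod_k M(g_{ij}^{(k)})^{n_k}\big)^{\mathsf{T}}=D_{ij}\,\prod_k\big(M(g_{ij}^{(k)})^{\mathsf{T}}\big)^{n_k}$, and inserting $D_{ij}^{-1}D_{ij}=\mathrm{Id}$ between consecutive factors rewrites this as $\prod_k (K_{ij}^{(k)+})^{n_k}\,D_{ij}$. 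Splitting $n_k=\pm|n_k|$ and using $K^{-}=(K^{+})^{-1}$ turns $(K_{ij}^{(k)+})^{n_k}$ into $(K_{ij}^{(k)}(n_k))^{|n_k|}$, which is the claimed form; note that the trailing factor $D_{ij}$ is essential, since the conjugates $K^{\pm}$ need not themselves satisfy the cocycle condition and it is exactly the extra $D_{ij}$ that restores a genuine pushforward transition matrix. The main obstacle I anticipate is not the matrix algebra but the geometric set-up of steps (i)–(ii): one must check that a single covering simultaneously trivializes $\mcO_Y(-L)$, all the $\mcL_k$ on $X$, and puts $\phi$ in square-root form with the roots $w_i$ glued precisely by the $\xi_{ij}$ of $\mcO_Y(-L)$. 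Pinning down this compatibility—so that $D_{ij}$ is exactly $\mathrm{diag}(1,\xi_{ij})$ and not some twist—is where the hypothesis $\phi_\ast\mcO_X\cong\mcO_Y\oplus\mcO_Y(-L)$ and the Catanese–Perroni description are really used.
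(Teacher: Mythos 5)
Your proposal is correct in substance, but it takes a genuinely different route from the paper's. You work ``upstairs'': you trivialize each $\mcL_k$ on $V_i=\phi^{-1}(U_i)$, encode multiplication by a transition unit $g=a+bw_j\in\mcO_X^\times$ as the regular-representation matrix $M(g)=\left(\begin{smallmatrix} a & bf_j \\ b & a\end{smallmatrix}\right)$, and obtain the whole theorem at once from the multiplicativity $M(gh)=M(g)M(h)$, $M(g^{-1})=M(g)^{-1}$. The paper works ``downstairs'' with admissible pairs: it never takes trivializations of $\mcL_k$ on the $V_i$ as input, but starts from a good representation $(\{G_{ij}^{(k)}\},\{M_i^{(k)}\})$ of $(\phi_\ast\mcL_k,M_{\mcL_k})$ (Lemma~\ref{lem:good-rep}, which uses smoothness of $X$), extracts local generators $\bm{d}_{i1}$ of $\mcL_k|_{V_i}$ inside $\CC(X)$ from goodness, and then proves the tensor formula by an explicit basis manipulation (Proposition~\ref{prop:tensor}), handles nonnegative exponents by induction (Lemma~\ref{lem:n-tensor}), and treats inverses by a separate computation (Proposition~\ref{prop:inverse}). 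The two computations agree at their core: by Corollary~\ref{cor:corr}~(i) the transition unit of $\mcL_{(\mcM,M)}$ is $g_{ij,11}+g_{ij,21}(t_i-a_{i0})/a_{i1}$, and the paper's $K_{ij}^{(k)+}$ in (\ref{eq:Kij^k}) is exactly your $M(g)$ for this unit in the basis $(1,t_i)$, since $K^{(k)+}=\alpha E+\beta M_i^{(0)}$ with $\alpha=g_{11}-g_{21}a_{i0}/a_{i1}$, $\beta=g_{21}/a_{i1}$. Your homomorphism observation replaces Proposition~\ref{prop:tensor} plus the induction, and makes the commutativity of all factors transparent (they lie in the commutative algebra generated by $M_i^{(0)}$); what the paper's formulation buys is formulas expressed purely in terms of the $2$-bundle data $(G^{(k)},M^{(k)})$, which is what its applications in Sections~\ref{sec:subgroup}--\ref{sec:example} actually use.

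Two caveats. First, your step (iii) --- a single covering of $Y$ with every $\mcL_k$ trivial on $\phi^{-1}(U_i)$ --- is the one real gap you leave open, and it needs an argument: the two points of a fiber cannot be separated by disjoint Zariski opens, so ``trivial near each point'' does not suffice. It is nonetheless true: the stalk of $\mcL_k$ over the semi-local ring $(\phi_\ast\mcO_X)_y$ is free (finitely generated projective modules over semi-local rings are free), a generator extends to an open $W\supset\phi^{-1}(y)$, and since $\phi$ is finite hence closed, $U:=Y\setminus\phi(X\setminus W)$ is an open neighborhood of $y$ with $\phi^{-1}(U)\subset W$. The paper sidesteps exactly this point via Lemma~\ref{lem:good-rep}. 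Second, your $K^{-}:=(K^{+})^{-1}$ is a legitimate choice (its entries are regular because the norm $a^2-b^2f_j$ is a unit), but it is not literally the paper's: the formula (\ref{eq:-Kij^k}) satisfies $K^{+}_{ij}K^{-}_{ij}=(a_{j1}^{(k)}/a_{i1}^{(k)})E$, as one checks from $a_{j1}\det(G_{ij})=(a_{i1}g_{ij,11}^2-2a_{i0}g_{ij,11}g_{ij,21}-a_{i2}g_{ij,21}^2)\xi_{ij}$; the discrepancy is the \v{C}ech coboundary of the cochain $(a_{i1}^{(k)})$, so both cocycles define the same bundle. Likewise your $D_{ij}M(g)^{t}$ versus the paper's frame convention, and your gluing $w_i=\xi_{ij}w_j$ versus the paper's $t_j=t_i\xi_{ij}$, are transpose/index bookkeeping that you must align so that the trailing factor is $\mathrm{diag}(1,\xi_{ij})$ with $\xi_{ij}$ the transitions of $\mcO_Y(-L)$ and not of $\mcO_Y(L)$. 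None of this affects the existence assertion of the theorem; only the ``Moreover'' clause identifying your matrices with those of Theorem~\ref{thm:group_law} holds merely up to these equivalences rather than on the nose.
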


\begin{rem}
The push forward $\phi_\ast(\mcL_1^{n_1}\otimes\dots\otimes\mcL_m^{n_m})$ may be indecomposable even if $\phi_\ast\mcL_i$ splits for each $i=1,\dots,m$ (see Example~\ref{ex:conic} and \ref{ex:quartic}). 
\end{rem}

Theorem~\ref{thm:split} is a criterion for splitting of the push-forward for a line bundle on $X$ (see Remark~\ref{rem:generator} and Lemma~\ref{lem:generator}). 

\begin{Thm}\label{thm:split}
Let $D^+$ be an effective divisor on $X$, and
let $D$ be the effective divisor on $Y$ defined by $f=0$ for $f\in\HH^0\big(Y,\mcO_Y(D)\big)$ such that $\phi^\ast D=D^++\iota^\ast D^+$. 
Assume that $\HH^0(Y,\mcO_Y)=\CC$. 

If $\phi_\ast\mcO_X(D^+)\cong\mcO_Y(D')\oplus\mcO_Y$ for a divisor $D'$ on $Y$ satisfying either $\mcO_Y(D')\cong\mcO_Y$ or $\HH^0\big(Y,\mcO_Y(D')\big)=0$, 
then $D$ and $D'$ satisfy the following two conditions{\rev :} 
\begin{enumerate}[label=\rm (\roman{enumi})]
	\item $D'$ is linearly equivalent to $D-L$, i.e., $\mcO_Y(D')\cong\mcO_Y(D-L)$; and 
	\item there are global sections $a_0$ and $a_1$ of $\mcO_Y(L)$ and $\mcO_Y(2L-D)$, respectively, such that $F=a_0^2+fa_1$, {\rev where $F$ is a global section of $\mcO_Y(B_\phi)$ defining the branch locus $B_\phi$ of the double cvore $\phi$.} 
\end{enumerate}
Moreover, in the case where $D^+$ is irreducible, if $D$  {\rev satisfies} {\rm (ii)}, then $\phi_\ast\mcO_X(D^+)\cong\mcO_Y(D-L)\oplus\mcO_Y$. 
\end{Thm}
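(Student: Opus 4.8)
The plan is to exploit the two structures carried by the rank-two bundle $\mcE:=\phi_\ast\mcO_X(D^+)$: its determinant and its module structure over the sheaf of algebras $\mcA:=\phi_\ast\mcO_X=\mcO_Y\oplus\mcO_Y(-L)$.

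For (i) I would argue purely by determinants. For a finite flat degree-two morphism one has $\det\phi_\ast\mcN\cong(\det\phi_\ast\mcO_X)\otimes\mathrm{Nm}(\mcN)$ for any line bundle $\mcN$ on $X$, and $\mathrm{Nm}(\mcO_X(D^+))=\mcO_Y(\phi_\ast D^+)$. Since $\phi_\ast\phi^\ast$ is multiplication by $2$ on cycles and $\phi^\ast D=D^++\iota^\ast D^+$, this gives $\phi_\ast D^+=D$, hence $\det\mcE\cong\mcO_Y(D)\otimes\mcO_Y(-L)=\mcO_Y(D-L)$. On the other hand $\det(\mcO_Y(D')\oplus\mcO_Y)=\mcO_Y(D')$, so the splitting hypothesis forces $\mcO_Y(D')\cong\mcO_Y(D-L)$, which is (i).

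For (ii) the engine is the $\mcA$-module structure. Multiplication by the tautological section $z\in\HH^0(X,\phi^\ast\mcO_Y(L))$ with $z^2=\phi^\ast F$ pushes forward, via the projection formula, to an $\mcO_Y$-morphism $\psi:\mcE\to\mcE\otimes\mcO_Y(L)$ satisfying $\psi^2=F\cdot\id$. Using (i) I would choose the splitting $\mcE\cong\mcO_Y(D-L)\oplus\mcO_Y$ so that the tautological section $s_+$ of $\mcO_X(D^+)$ (with $\mathrm{div}\,s_+=D^+$) lies in the $\mcO_Y$-summand; this is automatic when $\HH^0(Y,\mcO_Y(D'))=0$ and can be arranged by a constant change of frame when $\mcO_Y(D')\cong\mcO_Y$. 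Writing $\psi=\begin{pmatrix}a_0&b\\a_1&c\end{pmatrix}$ with $a_0\in\HH^0(\mcO_Y(L))$, $b\in\HH^0(\mcO_Y(D))$ and $a_1\in\HH^0(\mcO_Y(2L-D))$, the $(1,1)$-entry of $\psi^2=F\cdot\id$ reads $F=a_0^2+ba_1$. It remains to identify $b$ with $f$ up to a scalar: I would push forward multiplication by $s_+$ to the $\mcA$-linear map $\phi_\ast s_+:\mcA\to\mcE$, whose determinant is the norm $\mathrm{Nm}(s_+)$, a section of $\mcO_Y(D)$ with divisor $\phi_\ast D^+=D=\mathrm{div}\,f$; in the chosen frame (where $s_+=(0,1)$) $\mcA$-linearity gives $\det(\phi_\ast s_+)=-b$. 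As $\HH^0(Y,\mcO_Y)=\CC$, this yields $b=\lambda f$ with $\lambda\in\CC^\times$, and replacing $a_1$ by $\lambda a_1$ gives $F=a_0^2+fa_1$, proving (ii).

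For the converse I would run the construction backwards, under the extra assumption that $D^+$ is irreducible. From (ii) the traceless matrix $\psi_0=\begin{pmatrix}a_0&f\\a_1&-a_0\end{pmatrix}$ satisfies $\psi_0^2=F\cdot\id$, hence defines an $\mcA$-module structure on $\mcO_Y(D-L)\oplus\mcO_Y$; by the Catanese--Perroni correspondence this is the push-forward of a line bundle on $X$, and the task is to identify that line bundle with $\mcO_X(D^+)$. The identity $(\psi-a_0\,\id)(\psi+a_0\,\id)=fa_1\cdot\id$, together with $\mathrm{tr}\,\psi=0$ and $\det\psi=-F$ (which hold for the module $\mcE$ attached to $\mcO_X(D^+)$ for any splitting), should single out the sub-line-bundle on which $z$ acts as $a_0$, namely the one cut out over $D$ by $z=\phi^\ast a_0$; irreducibility of $D^+$ guarantees that $\phi^\ast D=D^++\iota^\ast D^+$ admits only this decomposition, so the two descriptions agree and $\mcE$ acquires the asserted block-diagonal form. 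The main obstacle is exactly this last step: once $D^+$ is irreducible and $D^+\neq\iota^\ast D^+$, the section $s_+$ vanishes only in codimension $\ge 2$ and so generates a trivial sub-bundle, giving $0\to\mcO_Y\xrightarrow{s_+}\mcE\to\mcO_Y(D-L)\to 0$; turning this into a splitting amounts to killing the extension class in $\HH^1(Y,\mcO_Y(L-D))$, which I expect the factorization $(\psi-a_0)(\psi+a_0)=fa_1$ to achieve by producing the complementary $\psi$-compatible sub-bundle, with the $\iota$-invariant case $D^+=\iota^\ast D^+$ handled separately.
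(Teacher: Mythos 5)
Your forward direction is correct and takes a genuinely different route from the paper's. The paper starts the same way you do, normalizing the section $\tilde f$ defining $D^+$ to $\bm{b}_2={}^t(0,1)$ in the given splitting (this is where $\HH^0(Y,\mcO_Y)=\CC$ and the hypothesis on $D'$ enter), but it then obtains (i) and (ii) simultaneously by a local computation: it refines to a good representation (Lemma~\ref{lem:good-rep}), trivializes $\mcO_X(D^+)$ via Corollary~\ref{cor:corr}, and applies Corollary~\ref{cor:splitting} to see that the $(1,2)$-entry $a_2$ of the matrix of $M$ in the basis $\bm{b}_1,\bm{b}_2$ cuts out $D$; since $a_2\in\HH^0\big(Y,\mcO_Y(L+D')\big)$, both (i) and (ii) follow. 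You replace this with the norm calculus: $\det\phi_\ast\mcN\cong\mathrm{Nm}(\mcN)\otimes\det\phi_\ast\mcO_X$ plus $\phi_\ast D^+=D$ gives (i) outright, and $\det(\phi_\ast s_+)=\mathrm{Nm}(s_+)$ identifies your entry $b$ (the paper's $a_2$) with $f$ up to the nonzero constant that $\HH^0(Y,\mcO_Y)=\CC$ supplies. This is sound: the paper's Proposition~\ref{prop:splitting} is exactly your norm of $s_+$ written in transition-function coordinates, and \eqref{eq:inv_sheaf} is the norm formula for line bundles, so the substance agrees; your packaging is coordinate-free, decouples (i) from (ii), and skips the refined-cover bookkeeping, at the cost of importing the standard determinant/norm identities for finite flat degree-two morphisms, which the paper in effect reproves by hand.

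The converse, however, has a genuine gap --- one you flag yourself. You begin exactly as the paper does: $M_f=\begin{pmatrix}a_0&f\\a_1&-a_0\end{pmatrix}$ makes $(\mcM,M_f)$ with $\mcM=\mcO_Y(D-L)\oplus\mcO_Y$ an admissible pair, so Proposition~\ref{prop:corr} produces a line bundle $\mcL_f$ with $\phi_\ast\mcL_f\cong\mcM$ split by construction, and the whole problem is to identify $\mcL_f$ (or $\iota^\ast\mcL_f$) with $\mcO_X(D^+)$. You never complete that identification (``should single out'', ``I expect''), and the fallback you sketch fails as stated. First, codimension-$\geq2$ vanishing of $s_+$ does not give $0\to\mcO_Y\xrightarrow{s_+}\mcE\to\mcO_Y(D-L)\to0$ for $\mcE=\phi_\ast\mcO_X(D^+)$: the saturated quotient is $\mathcal{I}_Z\otimes\mcO_Y(D-L)$, where $Z$ is the vanishing scheme of $s_+$ (away from $B_\phi$ it is $\phi(D^+\cap\iota^\ast D^+)$), and nothing in the hypotheses makes $Z$ empty, so the quotient need not be a line bundle. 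Second, even granted the sequence, asserting that $(\psi-a_0)(\psi+a_0)=fa_1$ ``produces the complementary $\psi$-compatible sub-bundle'' is a restatement of the desired splitting, not an argument; no mechanism is offered that kills the class in $\HH^1(Y,\mcO_Y(L-D))$.

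The missing step is short and needs no extension theory. Set $\tilde f:=\upsilon_f(\bm{b}_2)\in\HH^0(X,\mcL_f)$. Your own norm computation, run for $\mcL_f$ (in the paper, the combination of Corollary~\ref{cor:corr} and Corollary~\ref{cor:splitting}), gives $\mathrm{div}(\tilde f)+\iota^\ast\mathrm{div}(\tilde f)=\phi^\ast D=D^++\iota^\ast D^+$. Since $D^+$ is irreducible, the only effective solutions are $\mathrm{div}(\tilde f)=D^+$ or $\mathrm{div}(\tilde f)=\iota^\ast D^+$; this also covers the $\iota$-invariant case uniformly, because $\mathrm{div}(\tilde f)+\iota^\ast\mathrm{div}(\tilde f)=2D^+$ then forces $\mathrm{div}(\tilde f)=D^+$, so the separate treatment you defer is unnecessary. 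Hence $\mcO_X(D^+)\cong\mcL_f$ or $\mcO_X(D^+)\cong\iota^\ast\mcL_f\cong\mcL_{(\mcM,-M_f)}$ by \eqref{eq:conjugate}, and in either case $\phi_\ast\mcO_X(D^+)\cong\mcM=\mcO_Y(D-L)\oplus\mcO_Y$, which is the paper's conclusion.
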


Theorem~\ref{thm:split} shows a correspondence between line bundles on a smooth double cover and equations of the form $F=a_0^2+a_1a_2$. 
In the case of $Y=\PP^1$ (hence $X$ is a hyperelliptic curve), Jacobi \cite{jacobi1846} have studied this correspondence via the Jacobian variety (cf. \cite{terasoma2019}). 

A finite surjective morphism of degree $2$ from a normal variety to a smooth variety is called a \textit{normal} double cover in this paper. 
The following theorem is a generalization of \cite[Theorem~3]{schwarzenberger1961}. 

\begin{Thm}\label{thm:push}
Let $\mcE$ be a $2$-bundle on a smooth projective variety $Y$ of dimension $n$ over $\CC$. 
There exist a normal double cover $\phi:X\to Y$ and a divisorial sheaf $\mcL$ on $X$ such that $\mcE\cong\phi_\ast\mcL$.
\end{Thm}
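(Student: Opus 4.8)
The plan is to realize $\mcE$ as a spectral sheaf in the spirit of Beauville--Narasimhan--Ramanan. Fix an ample line bundle $M$ on $Y$ and look for a \emph{trace-free} twisted endomorphism $\psi\colon\mcE\to\mcE\otimes M$, i.e.\ a global section of $\mathcal{E}nd_0(\mcE)\otimes M$, where $\mathcal{E}nd_0(\mcE)\subset\mathcal{E}nd(\mcE)$ is the rank-$3$ subbundle of trace-free endomorphisms. Set $L:=M$ and $F:=-\det\psi\in\HH^0\big(Y,\mcO_Y(2L)\big)$. The crucial point is that the Cayley--Hamilton identity for a trace-free $2\times2$ matrix reads $\psi\circ\psi=-\det(\psi)\,\id=F\cdot\id$; hence multiplication by $\psi$ makes $\mcE$ a module over the sheaf of $\mcO_Y$-algebras $\mcA:=\mcO_Y\oplus\mcO_Y(-L)$ with the multiplication rule $w^2=F$, that is, a module over $\phi_\ast\mcO_X$ for the double cover $\phi\colon X:=\mathrm{Spec}_Y\mcA\to Y$ branched along $B_\phi=\{F=0\}$. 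In the language of this paper, $\psi$ is precisely the datum of an admissible pair for $\phi$, so no compatibility condition beyond Cayley--Hamilton ever has to be imposed; this is why \emph{any} trace-free twisted endomorphism yields a candidate.

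Granting such a $\psi$, I would identify $\mcE$ with a pushforward through the spectral construction: viewing $X$ inside the total space of $M$ with tautological eigenvalue $t$, the eigensheaf $\mcL:=\Coker\!\big(\mcE\otimes M^{-1}\xrightarrow{\,t\cdot\id-\psi\,}\mcE\big)$, regarded as a sheaf on $X$, is a rank-$1$ torsion-free sheaf satisfying $\phi_\ast\mcL\cong\mcE$. For normality, note that $\phi$ is finite and flat of degree $2$, so $X$ is Cohen--Macaulay and in particular satisfies Serre's condition $S_2$; since $Y$ is smooth, $X$ is regular in codimension one exactly when the branch divisor $B_\phi=\{F=0\}$ is reduced, so $X$ is normal as soon as $F=-\det\psi$ is square-free. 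Ampleness of $M$ gives $\HH^0(Y,M^{-1})=0$, whence $\HH^0(X,\mcO_X)=\HH^0(Y,\mcO_Y)\oplus\HH^0(Y,M^{-1})=\CC$ and $X$ is connected; together with reducedness this makes $X$ integral and normal. Finally, outside a closed set of codimension $\geq2$ the eigensheaf $\mcL$ is a line bundle, so replacing it by its reflexive hull $\mcL^{\vee\vee}$ produces a divisorial sheaf; as the finite pushforward of a divisorial sheaf on $X$ is reflexive on the smooth variety $Y$ and agrees with $\mcE$ in codimension one, we still have $\phi_\ast\mcL^{\vee\vee}\cong\mcE$, as required.

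The one substantive point, and where I expect the difficulty to concentrate, is the existence of $\psi$ with $\det\psi$ square-free. For $M$ sufficiently ample the bundle $\mathcal{E}nd_0(\mcE)\otimes M$ is globally generated, so the evaluation $Y\times\HH^0\big(\mathcal{E}nd_0(\mcE)\otimes M\big)\to\mathcal{E}nd_0(\mcE)\otimes M$ is surjective; fibrewise $\{\det\psi(y)=0\}$ is the quadric cone $\{p^2+qr=0\}$ (locally $\det\psi=-(p^2+qr)$) in each $3$-dimensional fibre. I would then run a Bertini-type argument on the universal branch locus to show that for general $\psi$ the divisor $\{F=0\}$ is reduced — indeed smooth away from a set of codimension $\geq2$ in $Y$ — and not a perfect square. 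The delicate part is that $F$ is not a free element of $|2L|$ but is constrained to the image of the quadratic \emph{determinant} map; establishing that this image still meets the open locus of reduced, non-square sections, equivalently that the general spectral double cover is normal, is the technical heart of the argument, and one controls it by combining global generation with generic smoothness in characteristic zero.
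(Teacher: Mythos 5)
Your proposal is correct, but it proves the theorem by a genuinely different route than the paper. The paper modifies Schwarzenberger's argument: it embeds $\PP_\mcE=\Proj(S(\mcE))$ into $\PP^N$ via $\mcH^{\oplus r+1}\to\mcE\otimes\mcH^k$ and the Segre map, cuts with a general quadric $Q$, uses Kleiman's transversality to force the locus of fibres of $p$ lying inside $Q$ to have codimension $3$, then Stein-factorizes and normalizes to obtain $\phi$, recovering $\mcE\otimes\mcH^k$ as the reflexive pushforward across that codimension-$3$ set. You instead run the spectral (Beauville--Narasimhan--Ramanan) construction: a trace-free $\psi\in\HH^0(Y,\mathcal{E}nd_0(\mcE)\otimes M)$ with $F=-\det\psi$ makes $\mcE$ a $\phi_\ast\mcO_X$-module by Cayley--Hamilton, i.e.\ exactly an admissible pair in the paper's sense, and since $\phi$ is affine this module structure \emph{already} gives $\mcE\cong\phi_\ast\mcL$; your cokernel of $t\cdot\id-\psi$ is unnecessary (and in its naive form risks an off-by-a-twist error), and the passage to $\mcL^{\vee\vee}$ is also not needed, since $\mcL$ is rank-one maximal Cohen--Macaulay over normal $X$, hence reflexive. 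Your normality criterion (CM plus $R_1$, i.e.\ $F$ square-free) and connectedness via $\HH^0(Y,M^{-1})=0$ are correct. The step you leave as a sketch --- existence of $\psi$ with $\det\psi$ square-free --- is the true analogue of the paper's appeal to Kleiman, and your plan does close it: the determinantal cone in $\mathcal{E}nd_0(\mcE)\otimes M$ is fibrewise a rank-$3$ quadric cone singular only at the vertex, so pulling back along the smooth surjective evaluation $Y\times\HH^0\to\mathcal{E}nd_0(\mcE)\otimes M$ and applying generic smoothness in characteristic zero shows that for general $\psi$ the branch divisor is smooth away from $Z(\psi)=\{\psi=0\}$, which has codimension $3$ for a general section of a globally generated rank-$3$ bundle; every component of the branch divisor has dimension $n-1>n-3$, so it is reduced. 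What each approach buys: yours stays inside the paper's own admissible-pair formalism with explicit $L=M$ and explicit branch equation $F=-\det\psi$, producing a cover normal by construction with no Stein factorization or normalization; the paper's construction realizes $X$ geometrically as a quadric section of $\PP_\mcE$, at the price of the embedding bookkeeping and the reflexive-extension argument.
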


This paper is organized as follows. 
In Section~\ref{sec:2-bundles}, we rewrite the correspondence in \cite{catper2017} to make clear the relation between sections of $\mcL$ and $\phi_\ast\mcL$ for a line bundle $\mcL$ on $X$ (Proposition~\ref{prop:corr} and Corollary~\ref{cor:corr}). 
In Section~\ref{sec:group_structure}, we prove Theorem~\ref{thm:group_law0} (see Theorem~\ref{thm:group_law}). 
In Section~\ref{sec:subgroup}, we prove Theorem~\ref{thm:split}. 
We also define a subgroup $\sbpic_\phi(X)\subset\Pic(X)$ generated by line bundles on $X$ whose push-forwards by $\phi$ split. 
Observing several examples, a conjecture arises (Conjecture~\ref{conj:spic}). 
In Section~\ref{sec:normal}, we prove Theorem~\ref{thm:push}, and give an idea to generate $2$-bundles. 
In Section~\ref{sec:P1}, we give algorithms for computing the direct summands of a $2$-bundle on $\PP^1$ from its transition functions. 
In Section~\ref{sec:example}, we prove \cite[Proposition~8]{schwarzenberger1961} about jumping lines of $\phi_\ast\mcL$ by the method in Section~\ref{sec:P1} in the case where $\phi:X\to\PP^2$ is a non-singular double cover branched at conic on $\PP^2$. 
Moreover, we compute certain global sections of a line bundle on $X$ (Example~\ref{ex:global_section}).

\section{Line bundles over a double cover}\label{sec:2-bundles}

Let $\phi:X\to Y$ be a double cover, where $Y$ is a smooth variety. 
In \cite{catper2017}, Catanese and Perroni investigated a correspondence between line bundles on $X$ and $2$-bundles on $Y$. 
In this section, we describe it in terms of transition functions of $2$-bundles in the case where $X$ and $Y$ are smooth. 
We call a double cover $\phi:X\to Y$ a \textit{non-singular double cover} if $X$ and $Y$ are smooth over $\CC$. 

Let $\phi:X\to Y$ be a non-singular double cover. 
Let $B_\phi\subset Y$ and $R_\phi\subset X$ denote the branch locus and the ramification locus of $\phi$, respectively, and let $F\in\HH^0(Y,\mcO_Y(B_\phi))$ be a section defining $B_\phi$. 
There exists a divisor $L$ on $Y$ such that $2L\sim B_\phi$ and $\phi_\ast\mcO_X\cong\mcO_Y\oplus\mcO_Y(-L)$ as $\mcO_Y$-modules. 
We also have 
\begin{align*}
	\phi_\ast\mcO_X\cong \left(\bigoplus_{n=0}^\infty\, t^n\mcO_Y(-nL)\right)\Bigg/(t^2-F) 
\end{align*}
as $\mcO_Y$-algebras. 
Here $t$ corresponds to a section of $\mcO_X(R_\phi)$ defining the ramification locus $R_\phi$. 
{\rev 
For a line bundle $\mcL$ on $X$, $\phi_\ast\mcL$ is a $\phi_\ast\mcO_X$-module. 
Hence the multiplication by $t$ gives a morphism $M_\mcL:\phi_\ast\mcL(-L)\to\phi_\ast\mcL$ such that $M_\mcL^2=F{\cdot}\id_{\phi_\ast\mcL}$, i.e., the composition of $M_\mcL(-L):\phi_\ast\mcL(-2L)\to\phi_\ast\mcL(-L)$ and $M_\mcL:\phi_\ast\mcL(-L)\to\phi_\ast\mcL$ is the multiplication by $F$. 
}

\begin{defin}
	Let $(\mcM,M)$ be a pair of a $2$-bundle $\mcM$ on $Y$ and a morphism $M:\mcM(-L)\to\mcM$. 
	\begin{enumerate}
	\item We call $(\mcM,M)$ an \textit{admissible pair for $\phi$} if $M^2=F{\cdot}\id_\mcM:\mcM(-2L)\to\mcM$. 
	\item Two admissible pairs $(\mcM,M)$ and $(\mcN,N)$ are \textit{equivalent} if there exists an isomorphism $\Psi:\mcM\to\mcN$ such that $\Psi\circ M=N\circ\Psi(-L)$, and write $(\mcM,M)\sim(\mcN,N)$. 
	\end{enumerate}
\end{defin}

Let $\ad_\phi(Y)$ be the set of all equivalence classes of admissible pairs for a non-singular double cover $\phi:X\to Y$: 
\begin{align*}
	\ad_\phi(Y):=\big\{(\mcM,M):\mbox{ an admissible pair for $\phi$}\big\}\big/\sim 
\end{align*}

To describe a correspondence between admissible pairs for a non-singular double cover $\phi:X\to Y$ and line bundles on $X$, we introduce some notation. 
Let $(\mcM,M)$ be an admissible pair for $\phi$, and let $\gtU:=\{U_i\}_{i\in I}$ be an affine open covering of $Y$ such that 
\begin{align*}
	\begin{aligned}
	\phi_\ast\mcO_{X}|_{U_i} &\cong \mcO_{U_i}\oplus\mcO_{U_i} t_i & \mbox{as $\mcO_{U_i}$-algebras,} 
	\\
	\varphi_i:\mcM_i &:=\mcM|_{U_i} \overset{\sim}{\to}\mcO_{U_i}^{\oplus2} & \mbox{as $\mcO_{U_i}$-modules}
	\end{aligned}
\end{align*}
for any $i\in I$, where 
$t_i:=t|_{U_i}$. 
Note that 
$t_j=t_i\xi_{ij}$ 
for $i,j\in I$, 
where $\xi_{ij}\in\mcO_{U_i\cap U_{j}}^\times$ correspond to transition functions of $\mcO_Y(-L)$:
\[
\begin{tikzpicture}
	\node (Lj) at (0,1.2) {$\mcO_Y(-L)|_{U_j}$};
	\node (Li) at (3,1.2) {$\mcO_Y(-L)|_{U_i}$};
	\node (Oj) at (0,0) {$\mcO_{U_j}$};
	\node (Oi) at (3,0) {$\mcO_{U_i}$};
	
	\draw[double distance=2.5pt] (Lj)--(Li);
	\draw[->] (Oj)-- node[above, scale=0.7] {$\times \xi_{ij}$} (Oi);
	\draw[->] (Lj)-- node[right] {\rotatebox{90}{$\sim$}} (Oj);
	\draw[->] (Li)-- node[right] {\rotatebox{90}{$\sim$}} (Oi);
\end{tikzpicture}
\]
Then we have transition functions $G_{ij}\in\GL(2,\mcO_{U_i\cap U_{j}})$ of $\mcM$ for $i,j\in I$: 
\begin{align}\label{eq:Gij}
	G_{ij}=
	\begin{pmatrix}
		g_{ij,11} & g_{ij,12} \\ g_{ij,21} & g_{ij,22}
	\end{pmatrix}:=\varphi_i\circ\varphi_j^{-1}
	:\mcO_{U_j}^{\oplus 2}|_{U_i\cap U_{j}}\to\mcO_{U_i}^{\oplus 2}|_{U_i\cap U_{j}}
\end{align}
satisfying $G_{ik}=G_{ij}G_{jk}$ and $G_{ii}=E$ for each $i,j,k\in I$, where $E$ is the identity matrix. 
The restriction of $M:\mcM(-L)\to\mcM$ to $U_i$ corresponds to a matrix $M_i$:
\begin{align}\label{eq:Mi}
	M_i=
	\begin{pmatrix}
		a_{i0} & a_{i2} \\ a_{i1} & -a_{i0}
	\end{pmatrix}:=\varphi_i\circ\big(\varphi_i(-L)\big)^{-1}
	:\mcO_{U_i}^{\oplus 2}\to\mcO_{U_i}^{\oplus2}
\end{align}
satisfying $a_{i0}^2+a_{i1}a_{i2}=F_i:=F|_{U_i}$ and $M_{j}=\xi_{ij}G_{ij}^{-1}M_{i}G_{ij}$ as elements of $\GL(2,\CC(X))$ for each $i,j\in I$:
\[
\begin{tikzpicture}
	\node (mjl) at (-3.5,1.5) {$\mcM_j(-L)$};
	\node (mj) at (3.5,1.5) {$\mcM_j$};
	\node (mil) at (-3.5,-1.5) {$\mcM_i(-L)$};
	\node (mi) at (3.5,-1.5) {$\mcM_i$};
	\node (oj1) at (-1,0.7) {$\mcO_{U_j}^{\oplus 2}$};
	\node (oj2) at (1,0.7) {$\mcO_{U_j}^{\oplus 2}$};
	\node (oi1) at (-1,-0.7) {$\mcO_{U_i}^{\oplus 2}$};
	\node (oi2) at (1,-0.7) {$\mcO_{U_i}^{\oplus 2}$};
	
	\draw[->] (mjl) -- node[midway,sloped,below,scale=0.7] {$\varphi_j(-L)$} (oj1);
	\draw[->] (mil) -- node[midway,sloped,above,scale=0.7] {$\varphi_i(-L)$} (oi1);
	\draw[->] (mj) -- node[midway,sloped,below,scale=0.7] {$\varphi_j$} (oj2);
	\draw[->] (mi) -- node[midway,sloped,above,scale=0.7] {$\varphi_i$} (oi2);
	\draw[->] (oj1) -- node[below, scale=0.7] {$M_{j}$} (oj2);
	\draw[->] (oi1) -- node[above, scale=0.7] {$M_{i}$} (oi2);
	\draw[->] (oj1) -- node[left, scale=0.7] {$\xi_{ij}G_{ij}$} (oi1);
	\draw[->] (oj2) -- node[right, scale=0.7] {$G_{ij}$} (oi2);
	\draw[double distance=2.5pt] (mjl) -- (mil);
	\draw[double distance=2.5pt] (mj) -- (mi);
	\draw[->] (mjl) -- node[below, scale=0.7] {$M$} (mj);
	\draw[->] (mil) -- node[above, scale=0.7] {$M$} (mi);
\end{tikzpicture}
\]

\begin{defin}
With the above notation, we call $(\{G_{ij}\},\{M_i\})_\gtU$ a \textit{representation} of the admissible pair $(\mcM,M)$. 
A representation $(\{G_{ij}\},\{M_i\})_{\gtU}$ of $(\mcM,M)$ is said to be \textit{good} if $a_{i1}$ is a unit on $U_i$ for each $i\in I$. 
\end{defin}

\begin{lem}\label{lem:good-rep}
Any admissible pair $(\mcM,M)$ for $\phi$ has a good representation. 
\end{lem}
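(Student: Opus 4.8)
The plan is to reduce the statement to a pointwise linear-algebra problem and solve it by choosing a suitable constant frame near each point of $Y$. Fix a representation $(\{G_{ij}\},\{M_i\})_\gtU$ as constructed above, so that on each chart $U_i$ the endomorphism $M$ is given by the trace-zero matrix $M_i$ with entries $a_{i0},a_{i1},a_{i2}$ satisfying $a_{i0}^2+a_{i1}a_{i2}=F_i$. Changing the trivialization $\varphi_i$ of $\mcM|_{U_i}$ by a matrix $P\in\GL(2,\mcO_{U_i})$ replaces $M_i$ by $PM_iP^{-1}$: since $\mcO_Y(-L)|_{U_i}$ is free, the induced change of frame on $\mcM(-L)|_{U_i}$ is given by the same matrix $P$, so the two twists cancel and $M_i$ is conjugated. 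Thus it suffices to show that, after shrinking $U_i$ to a smaller affine open around any prescribed point, one can conjugate $M_i$ so that its $(2,1)$-entry becomes a unit. If $v=(v_1,v_2)^{\mathsf T}$ is the first vector of the new frame and $w$ the second, the new $(2,1)$-entry equals $\det(v\mid M_iv)/\det(v\mid w)$, and a direct computation gives $\det(v\mid M_iv)=q(v)$, where $q(v):=a_{i1}v_1^2-2a_{i0}v_1v_2-a_{i2}v_2^2$. Hence I only need a frame vector $v$ with $q(v)$ a unit.

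The key point — and the step I expect to be the main obstacle — is that the bundle map $M$ never vanishes on $Y$; equivalently, the three functions $a_{i0},a_{i1},a_{i2}$ have no common zero on $U_i$. Away from $B_\phi$ this is immediate, since $\det M_i=-F_i$ is then nonzero, so $M_i$ is invertible. At a point $p\in B_\phi$ one has $F_i(p)=0$; if $M_i(p)=0$ then $a_{i0},a_{i1},a_{i2}$ all vanish at $p$, so $F_i=a_{i0}^2+a_{i1}a_{i2}$ vanishes to order at least two at $p$. This contradicts the fact that, $\phi$ being a non-singular double cover, $B_\phi$ is smooth and hence $F$ vanishes to order exactly one along $B_\phi$. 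Therefore $M_i(p)\neq0$, which is exactly the statement that the quadratic form $q_p(v):=a_{i1}(p)v_1^2-2a_{i0}(p)v_1v_2-a_{i2}(p)v_2^2$ is not identically zero, for every $p$.

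To finish, I fix a point $p$ and choose a constant vector $v^{(p)}\in\CC^2$ with $q_p(v^{(p)})\neq0$; completing $v^{(p)}$ to a constant basis of $\CC^2$ produces a frame change $P_p$ of unit (constant) determinant, after which the $(2,1)$-entry of the conjugated matrix is $q(v^{(p)})$ up to a nonzero constant. Since $q(v^{(p)})$ is a regular function on $U_i$ nonvanishing at $p$, it is a unit on a basic affine open $U_p\subset U_i$ containing $p$. Running this over all points gives a refinement $\gtU'=\{U_p\}$ of $\gtU$ by affine opens; such a refinement still trivializes both $\mcM$ and the $\mcO_Y$-algebra $\phi_\ast\mcO_X$, so the resulting data $(\{G'_{ij}\},\{M'_i\})_{\gtU'}$ is a genuine representation of $(\mcM,M)$, and by construction its lower-left entries are units. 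This is the required good representation.
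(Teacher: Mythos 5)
Your proof is correct, and its engine is the same as the paper's: conjugate each $M_i$ by constant matrices so that the $(2,1)$-entry becomes nonvanishing at a prescribed point, then refine the cover to principal affine opens where that entry is a unit. Indeed your quadratic form $q(v)=a_{i1}v_1^2-2a_{i0}v_1v_2-a_{i2}v_2^2$ is exactly the expression $a_{i1}-2p\,a_{i0}-p^2a_{i2}$ that the paper produces by conjugating with $A_{(i,k)}=\begin{pmatrix}1&0\\ p_{ik}&1\end{pmatrix}$ (your $v=(1,p)$), the swap matrix $A_{(i,2)}$ corresponding to $v=(0,1)$, which yields $\pm a_{i2}$. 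The differences are in execution. The paper keeps the refinement finite over each chart: it chooses finitely many \emph{general} sections $p_{ik}\in\Gamma(U_i,\mcO_Y)$ so that $a_{i1},a_{i2},a_{i3},\dots,a_{in_i}$ have empty common zero locus on $U_i$, and covers $U_i$ by the corresponding nonvanishing loci; you instead choose one constant frame per point of $Y$, producing a point-indexed (typically infinite) refinement --- harmless, since no finiteness is demanded of a representation, and it spares you the general-position argument. In compensation, you actually prove the fact the paper only asserts with ``since $X$ is smooth'', namely that $M$ is nowhere zero: off $B_\phi$ because $\det M_i=-F_i\ne0$, and at $p\in B_\phi$ because common vanishing of $a_{i0},a_{i1},a_{i2}$ would put $F_i=a_{i0}^2+a_{i1}a_{i2}$ in $\mathfrak{m}_p^2$, contradicting the smoothness of the branch divisor forced by smoothness of $X$ (locally $X=\{t_i^2=F_i\}$). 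Your preliminary observations --- that a frame change by $P$ acts on $M_i$ by conjugation because the twist by the fixed trivialization of $\mcO_Y(-L)|_{U_i}$ cancels, and that the new $(2,1)$-entry is $\det(v\mid M_iv)/\det(v\mid w)$ --- are correct and are used implicitly in the paper.
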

\begin{proof}
Let $(\{G_{ij}\},\{M_i\})_\gtU$ be a representation of $(\mcM,M)$, where $G_{ij}$ and $M_{i}$ are as (\ref{eq:Gij}) and (\ref{eq:Mi}), respectively. 
Note that we have 
\begin{align*}
	a_{i1}&\ne0, & a_{i2}&\ne0, & \{a_{i0}&=a_{i1}=a_{i2}=0\}=\emptyset 
\end{align*}
on $U_i$ since $X$ is smooth. 
Hence, if we put $a_{ik}:=a_{i1}-2p_{ik}a_{i0}-p_{ik}^2a_{i2}$ for general sections $p_{ik}$ ($3\leq k\leq n_i$) of $\mcO_{U_i}$ for some $n_i\geq 3$, 
we obtain 
\begin{align*}
	\{a_{i1}=a_{i2}=a_{i3}=\dots=a_{in_i}=0\}=\emptyset 
\end{align*}
on $U_i$. 
Let $I^\natu:=\{(i,k)\mid i\in I,\ k=1,\dots,n_i\}$. 
We put $A_\alpha$ for $\alpha=(i,k)\in I^\natu$ as 
\begin{align}\label{eq:Palpha}
	A_{(i,1)}&:=E, 
	&
	A_{(i,2)}&:=
	\begin{pmatrix}
		0 & 1 \\ 1 & 0
	\end{pmatrix}, 
	&
	A_{(i,k)}&:=
	\begin{pmatrix}
		1 & 0 \\ p_{ik} & 1
	\end{pmatrix} \ \ (3\leq k\leq n_i), 
\end{align}
and define $g^\natu_{\alpha\beta,11},\dots,g^\natu_{\alpha\beta,22}$ and $a^\natu_{\alpha0},a^\natu_{\alpha1},a^\natu_{\alpha2}$ for $\alpha=(i,k), \, \beta=(j,k')\in I^\natu$ by 
\begin{align*}
	\begin{aligned}
	G_{\alpha\beta}^\natu
	&:=
	A_{\alpha}^{-1}G_{ij} A_\beta
	=
	\begin{pmatrix}
		g_{\alpha\beta, 11}^\natu & g_{\alpha\beta, 12}^\natu \\ g_{\alpha\beta, 21}^\natu & g^\natu_{\alpha\beta, 22}
	\end{pmatrix},
	\\
	M_\alpha^\natu
	&:=
	A_{\alpha}^{-1}M_i A_\alpha
	=
	\begin{pmatrix}
		a^\natu_{\alpha 0} & a^\natu_{\alpha 2} \\ a^\natu_{\alpha 1} & -a^\natu_{\alpha 0}
	\end{pmatrix}. 
	\end{aligned}
\end{align*}
Note that, since $a^\natu_{\alpha1}=a^\natu_{(i,k)1}=a_{ik}$ for $\alpha=(i,k)\in I^\natu$, we have 
\[ \{a^\natu_{(i,1)1}=a^\natu_{(i,2)1}=a^\natu_{(i,3)1}=\dots=a_{(i,n_i)1}^\natu=0\}=\emptyset. \] 
 Put $U_{\alpha}^\natu:=U_i\cap\{a^\natu_{\alpha 1}\ne0\}$ for $\alpha=(i, k)\in I^\natu$. 
Then $\gtU^\natu:=\{U_{\alpha}^\natu\}_{\alpha\in I^\natu}$ is an affine open covering of $Y$, and $(\{G_{\alpha\beta}^\natu\},\{M_{\alpha}^\natu\})_{\gtU^\natu}$ is a good representation of $(\mcM,M)$. 
\end{proof}

\begin{prop}[{\cite[Lemma~3.1 and Proposition~3.3]{catper2017}}]\label{prop:corr}
	Let $\phi:X\to Y$ be a non-singular double cover and let $\iota:X\to X$ be the covering transformation of $\phi$. 
	{\rev 
	The map $\Upsilon:\pic(X)\to\ad_\phi(Y)$ defined by 
		\begin{align*}
		\begin{array}{cccc}\Upsilon:& \pic(X) & \to & \ad_\phi(Y) \\ & \rotatebox{90}{$\in$} & & \rotatebox{90}{$\in$} \\ & [\mcL] & \mapsto & [(\phi_\ast\mcL,M_\mcL)]  \end{array}
		\end{align*} 
	 is well-defined and bijective. 
	}
\end{prop}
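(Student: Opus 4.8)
The plan is to construct the bijection $\Upsilon$ and its inverse explicitly using the good representations guaranteed by Lemma~\ref{lem:good-rep}, and to verify that the constructions are mutually inverse and independent of the choices made. First I would check that $\Upsilon$ is well-defined: starting from a line bundle $\mcL$ on $X$, the pushforward $\phi_\ast\mcL$ is locally free of rank $2$ since $\phi$ is finite flat of degree $2$, and the multiplication-by-$t$ map $M_\mcL:\phi_\ast\mcL(-L)\to\phi_\ast\mcL$ satisfies $M_\mcL^2=F\cdot\id$ because $t^2=F$ in $\phi_\ast\mcO_X$; hence $(\phi_\ast\mcL,M_\mcL)$ is an admissible pair. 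Isomorphic line bundles clearly give equivalent pairs, so $\Upsilon$ descends to a map on isomorphism classes.

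The core of the argument is to build the inverse. Given an admissible pair $(\mcM,M)$, I would pass to a good representation $(\{G_{ij}\},\{M_i\})_\gtU$ via Lemma~\ref{lem:good-rep}, so that each $a_{i1}$ is a unit on $U_i$. The relation $a_{i0}^2+a_{i1}a_{i2}=F_i$ together with $a_{i1}\in\mcO_{U_i}^\times$ lets me factor the $\mcO_{U_i}$-algebra structure: locally the subalgebra of $\phi_\ast\mcO_X$ generated by $M_i$ realizes $\phi^{-1}(U_i)$ as $\Spec$ of $\mcO_{U_i}[t_i]/(t_i^2-F_i)$, and the eigenspace decomposition of $M_i$ acting on $\mcO_{U_i}^{\oplus2}$ (available because $a_{i1}$ is invertible, so one eigenline is a nowhere-vanishing sub-line-bundle) produces a rank-one $\mcO_{\phi^{-1}(U_i)}$-module. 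The key computation is that the transition functions $G_{ij}$, once conjugated into the eigenbasis dictated by $M_i$ and $M_j$ and compared through $M_j=\xi_{ij}G_{ij}^{-1}M_iG_{ij}$, glue these local rank-one modules into a genuine line bundle $\mcL$ on $X$; one then verifies $\phi_\ast\mcL\cong\mcM$ as $\phi_\ast\mcO_X$-modules, with $M$ recovered as multiplication by $t$.

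After constructing both directions, I would confirm they are mutually inverse. For $\Upsilon$ applied after the inverse: starting from $(\mcM,M)$, building $\mcL$, and pushing forward returns $(\phi_\ast\mcL,M_\mcL)$, which is equivalent to $(\mcM,M)$ because the eigenline data is precisely what the pushforward of the glued bundle reconstructs. For the other composite: starting from $\mcL$ on $X$, the local eigenline of $M_\mcL$ recovers $\mcL|_{\phi^{-1}(U_i)}$, so the round trip yields a bundle isomorphic to $\mcL$. Finally I would check independence of the good representation chosen: two good representations of the same admissible pair differ by the change-of-frame matrices $A_\alpha$ and refinement maps appearing in Lemma~\ref{lem:good-rep}, and these act on the constructed line bundle by an isomorphism, so the class $[\mcL]\in\pic(X)$ is unchanged.

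The main obstacle I anticipate is the gluing step in the inverse construction: ensuring that the locally defined eigenline modules patch to a line bundle on all of $X$ (not merely on the ramified locus where the two sheets come together) requires tracking how $G_{ij}$ interacts with the eigendecompositions over $U_i$ and $U_j$ simultaneously, including behavior over the ramification locus $R_\phi$ where $F_i$ vanishes and the two eigenlines of $M_i$ collide. Handling the degeneration at $R_\phi$ — showing that the candidate line bundle remains locally free of rank one there, using smoothness of $X$ — is where the genuine content lies, and it is exactly what the goodness condition $a_{i1}\in\mcO_{U_i}^\times$ is designed to control.
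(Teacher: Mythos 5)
Your overall architecture matches the paper's in spirit (pushforward with multiplication by $t$ in one direction; a good representation and the line singled out by $M_i$ in the other, with goodness $a_{i1}\in\Gamma(U_i,\mcO_Y^\times)$ controlling the degeneration along $R_\phi$ --- your diagnosis of where the content lies is accurate). But one concrete step in your inverse construction fails as stated. The $t_i$-eigen-sub-line-bundle of $\phi^\ast\mcM$, spanned on $V_i=\phi^{-1}(U_i)$ by the nowhere-vanishing section $v_i={}^t(t_i+a_{i0},\ a_{i1})$ and glued by the $G_{ij}$, is \emph{not} the line bundle associated to $(\mcM,M)$ but its twist by $\phi^\ast\mcO_Y(-L)$; consequently its pushforward is $\mcM(-L)$, not $\mcM$, and your verification ``$\phi_\ast\mcL\cong\mcM$ with $M$ recovered as multiplication by $t$'' cannot go through. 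You can see this already on the trivial pair $\mcM=\mcO_Y\oplus\mcO_Y(-L)$, $G_{ij}=\begin{pmatrix}1&0\\0&\xi_{ij}\end{pmatrix}$, $M_i=\begin{pmatrix}0&F_i\\1&0\end{pmatrix}$, which must correspond to $\mcO_X$: here $v_i={}^t(t_i,\ 1)$ and $G_{ij}v_j=\xi_{ij}v_i$, so your eigen-subbundle is $\phi^\ast\mcO_Y(-L)$, whose pushforward is $\mcO_Y(-L)\oplus\mcO_Y(-2L)=\mcM(-L)$. Since $(\mcM(-L),M(-L))$ is in general not equivalent to $(\mcM,M)$ (an equivalence would force $\mcM(-L)\cong\mcM$), your round-trip check fails by this fixed twist. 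The repair is cheap but must be made: either take the \emph{quotient} of $\phi^\ast\mcM$ by the $(-t_i)$-eigenline spanned by ${}^t(a_{i0}-t_i,\ a_{i1})$ --- this eigenline is exactly the kernel of the canonical evaluation $\phi^\ast\phi_\ast\mcL\to\mcL$ --- or twist your eigen-subbundle by $\mcO_X(R_\phi)\cong\phi^\ast\mcO_Y(L)$.

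For comparison, the paper sidesteps the sub-versus-quotient bookkeeping entirely: rather than decomposing $\phi^\ast\mcM$, it embeds $\mcM$ into the constant sheaf $\CC(X)$ by a left-eigenvector relation, choosing $\bm{d}_{i_01}=t_{i_0}+a_{i_00}$, $\bm{d}_{i_02}=a_{i_02}$ with $t_{i_0}\begin{pmatrix}\bm{d}_{i_01}&\bm{d}_{i_02}\end{pmatrix}=\begin{pmatrix}\bm{d}_{i_01}&\bm{d}_{i_02}\end{pmatrix}M_{i_0}$ and transporting by $G_{i_0i}$; goodness yields $\bm{d}_{i2}=\frac{t_i-a_{i0}}{a_{i1}}\bm{d}_{i1}$, so the image is an $\mcO_X$-line bundle whose pushforward \emph{is} $\mcM$ tautologically (it is the same subsheaf of $\CC(X)$ viewed as an $\mcO_Y$-module), with multiplication by $t$ equal to $M$ on the nose. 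Note also that your mutual-inverse plan makes injectivity heavier than necessary: an equivalence $(\phi_\ast\mcL,M_\mcL)\sim(\phi_\ast\mcL',M_{\mcL'})$ is precisely a $\phi_\ast\mcO_X$-linear isomorphism, since compatibility with $M$ is compatibility with $t$, which generates $\phi_\ast\mcO_X$ over $\mcO_Y$; hence it is already an $\mcO_X$-isomorphism $\mcL\cong\mcL'$, and no round trip is needed. Likewise, surjectivity requires only one preimage per class, so independence of the chosen good representation need not be verified at all.
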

\begin{proof}
{\rev
Let $\CC(X)$ be the constant sheaf of rational functions on $X$, which can be regarded as an $\mcO_Y$-algebra. 
Any line bundle $\mcL$ on $X$ can be canonically embedded in $\CC(X)$, and $\phi_\ast\mcL$ is $\mcL\subset\CC(X)$ regarded as an $\mcO_Y$-module. 
 If $\mcL\cong\mcL'$ for line bundles $\mcL,\mcL'\subset\CC(X)$ on $X$, then there is a rational function $q\in\CC(X)^\times$ such that $\mcL'=q\mcL$ in $\CC(X)$. 
 The multiplication by $q$ gives an isomorphism $\Psi_q:\phi_\ast\mcL\to\phi_\ast\mcL'$ with $\Psi_q\circ M_\mcL=M_{\mcL'}\circ\Psi_q(-L)$. 
 Hence $(\phi_\ast\mcL,M_\mcL)\sim(\phi_\ast\mcL',M_{\mcL'})$, and $\Upsilon$ is well-defined. 

If $(\phi_\ast\mcL,M_\mcL)\sim(\phi_\ast\mcL',M_{\mcL'})$ for two line bundles $\mcL,\mcL'$ on $X$, 
then there is an isomorphism $\Psi:\phi_\ast\mcL\to\phi_\ast\mcL'$ with $\Psi\circ M_{\mcL}=M_{\mcL'}\circ\Psi(-L)$. 
This implies that $\Psi$ preserves the $\mcO_X$-module structures of $\mcL$ and $\mcL'$. 
Hence $\mcL\cong\mcL'$, and $\Upsilon$ is injective. 

We show that $\Upsilon$ is surjective. 
}
Let $(\mcM,M)$ be an admissible pair for $\phi$, 
and let $(\{G_{ij}\},\{M_i\})_{\gtU}$ be a good representation of $(\mcM,M)$, where $G_{ij}$ and $M_i$ are as (\ref{eq:Gij}) and (\ref{eq:Mi}). 
Fix an element $i_0\in I$. 
Let $\bm{d}_{i_01}, \bm{d}_{i_02}\in\CC(X)$ be linearly independent rational functions over $\CC(Y)$ satisfying 
\begin{align*}
	t_{i_0}
	\begin{pmatrix}
		\bm{d}_{i_01} & \bm{d}_{i_02}
	\end{pmatrix}
	=
	\begin{pmatrix}
		\bm{d}_{i_01} & \bm{d}_{i_02}
	\end{pmatrix}
	M_{i_0}.
\end{align*}
Note that there exist such $\bm{d}_{i_01},\bm{d}_{i_02}$, for example $\bm{d}_{i_01}=t_{i_0}+a_{i_00}$ and $\bm{d}_{i_02}=a_{i_02}$. 
Let $\eta_i:\mcM_i\to\CC(X)$ be the homomorphism defined by 
\begin{align*}
	\eta_i\circ\varphi_i^{-1}\begin{pmatrix} s_1 \\ s_2 \end{pmatrix}&:=
	\begin{pmatrix} \bm{d}_{i_01} & \bm{d}_{i_02} \end{pmatrix}G_{i_0i}\begin{pmatrix} s_1 \\ s_2 \end{pmatrix}. 
\end{align*}
We have $\eta_i=\eta_j$ on $U_i\cap U_j$ since $\varphi_j^{-1}=\varphi_i^{-1}\circ G_{ij}$ and 
\begin{align*}
\begin{aligned}
	\eta_j\circ\varphi_j^{-1}\begin{pmatrix} s_1 \\ s_2 \end{pmatrix}
	&=
	\begin{pmatrix} \bm{d}_{i_01} & \bm{d}_{i_02} \end{pmatrix}G_{i_0j}\begin{pmatrix} s_1 \\ s_2 \end{pmatrix}
	\\
	&=
	\begin{pmatrix} \bm{d}_{i_01} & \bm{d}_{i_02} \end{pmatrix}G_{i_0i}G_{ij}\begin{pmatrix} s_1 \\ s_2 \end{pmatrix}
	\\
	&=
	\eta_i\circ\varphi_{i}^{-1}\circ G_{ij}\begin{pmatrix} s_1 \\ s_2 \end{pmatrix}.
\end{aligned}
\end{align*}
Thus we can define an inclusion $\eta:\mcM\hookrightarrow\CC(X)$ of $\mcO_Y$-modules by gluing $\eta_i$ ($i\in I$).
The map $\eta$ is injective since $\bm{d}_{i_01}, \bm{d}_{i_02}$ are linearly independent over $\CC(Y)$. 
Put 
\begin{align*}
	\bm{d}_{i1}&:=\eta_i\circ\varphi_i^{-1} \begin{pmatrix} 1 \\ 0 \end{pmatrix}
	&
	\bm{d}_{i2}&:=\eta_i\circ\varphi_i^{-1} \begin{pmatrix} 0 \\ 1 \end{pmatrix}
\end{align*}
for each $i\in I$.
Then we have 
\begin{align}\label{eq:transform} 
	\begin{pmatrix} \bm{d}_{j1} & \bm{d}_{j2} \end{pmatrix} 
	&=
	\begin{pmatrix}
	\bm{d}_{i_01} & \bm{d}_{i_02}
	\end{pmatrix}
	G_{i_0i}G_{ij}
	=
	\begin{pmatrix}
		\bm{d}_{i1} & \bm{d}_{i2}
	\end{pmatrix}
	G_{ij}. 
\end{align}
Moreover, $M:\mcM(-L)\to\mcM$ is compatible with the multiplication by $t$ under $\eta$, i.e., 
\begin{align*}
	t_i
	\begin{pmatrix}
		\bm{d}_{i1} & \bm{d}_{i2}
	\end{pmatrix}
	&=
	\begin{pmatrix}
		\bm{d}_{i1} & \bm{d}_{i2}
	\end{pmatrix}
	M_i
	\qquad \mbox{ for each $i\in I$, }
\end{align*}
since $t_{i}=t_{i_0}\xi_{i_0i}$ and $M_i=\xi_{i_0i}G_{i_0i}^{-1}M_{i_0}G_{i_0i}$. 
Since $(\{G_{ij}\},\{M_i\})_\gtU$ is a good representation, 
we have 
\begin{align}\label{eq:basis}
	\begin{aligned}
	\bm{d}_{i2}&=\frac{t_i-a_{i0}}{a_{i1}}\bm{d}_{i1} & \quad & \mbox{over $U_i$}, \\
	\end{aligned}
\end{align}
Therefore $\Ima(\eta)\subset\CC(X)$ is a line bundle over $X$. 
{\rev 
By the construction of $\eta$, it is clear that $\Upsilon([\Ima(\eta)])=[(\mcM, M)]$. 
Hence $\Upsilon$ is surjective. 
}
%
%
\end{proof}

\begin{defin}
Let $(\mcM,M)$ be an admissible pair for $\phi:X\to Y$, and let $(\{G_{ij}\},\{M_i\})_\gtU$ be a representation of $(\mcM,M)$. 
For a line bundle $\mcL$ on $X$, 
we say that \textit{$\mcL$ is associated to $(\mcM,M)$} (or $(\{G_{ij}\},\{M_i\})_\gtU$) if 
{\rev 
$\Upsilon([\mcL])=[(\mcM,M)]$. 
A line bundle on $X$ associated to $(\mcM,M)$ is denoted by $\mcL_{(\mcM,M)}$. 
}
\end{defin}

\begin{cor}\label{cor:corr}
	Let $(\mcM,M)$ be an admissible pair for $\phi:X\to Y$, and let $(\{G_{ij}\},\{M_i\})_\gtU$ be a good representation of $(\mcM,M)$, where $G_{ij}$ and $M_i$ are defined as {\rm (\ref{eq:Gij})} and {\rm (\ref{eq:Mi})}, respectively. 
	Put $\mcL:=\mcL_{(\mcM,M)}$. 
	Then the following statements hold:
	\begin{enumerate}[label=\rm (\roman{enumi})]
	\item\label{cor:corr_transformation} For $V_i:=\phi^{-1}(U_i)$, there are isomorphisms $\tilde{\varphi_i}:\mcL|_{V_i}\overset{\sim}{\to}\mcO_{V_i}$ $(i\in I)$ such that $\tilde{\varphi}_i\circ\tilde{\varphi}_j^{-1}:\mcO_{V_j}\to\mcO_{V_i}$ on $V_i\cap V_j$ is defined by 
	\begin{align}
	\tilde{\varphi}_i\circ\tilde{\varphi}_j^{-1}(1)=g_{ij,11}+g_{ij,21}\,\dfrac{t_i-a_{i0}}{a_{i1}}.
	\label{eq:mcLi}
	\end{align}
	\item\label{cor:corr_morphism} A natural isomorphism $\upsilon:\mcM\to\phi_\ast\mcL$ of $\mcO_Y$-modules is locally given on $U_i$ by 
	\begin{align*}
		\tilde{\varphi}_i\circ\upsilon|_{U_i}\circ\varphi_i^{-1}\begin{pmatrix} 1 \\ 0 \end{pmatrix}&=1,
		& 
		\tilde{\varphi}_i\circ\upsilon|_{U_i}\circ\varphi_i^{-1}\begin{pmatrix} 0 \\ 1 \end{pmatrix}&=\frac{t_i-a_{i0}}{a_{i1}}.
	\end{align*}
	\end{enumerate}
\end{cor}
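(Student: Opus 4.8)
The plan is to unwind the explicit construction of the associated line bundle $\mcL=\mcL_{(\mcM,M)}$ carried out in the proof of Proposition~\ref{prop:corr} and to read both the transition functions and the isomorphism $\upsilon$ directly off that construction. Recall that there $\mcL$ was realized as $\Ima(\eta)\subset\CC(X)$, where $\eta\colon\mcM\hookrightarrow\CC(X)$ is obtained by gluing the local maps $\eta_i$, and that the rational functions $\bm{d}_{i1}:=\eta_i\circ\varphi_i^{-1}\binom{1}{0}$ and $\bm{d}_{i2}:=\eta_i\circ\varphi_i^{-1}\binom{0}{1}$ satisfy the two key identities (\ref{eq:transform}) and (\ref{eq:basis}). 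I expect every assertion of the corollary to follow from these two identities together with goodness of the representation.

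For part \ref{cor:corr_transformation}, I would first show that $\mcL|_{V_i}$ is the free $\mcO_{V_i}$-module of rank one generated by $\bm{d}_{i1}$. The $\mcO_{U_i}$-module $\Ima(\eta)|_{U_i}$ is generated by $\bm{d}_{i1}$ and $\bm{d}_{i2}$, but by (\ref{eq:basis}) one has $\bm{d}_{i2}=\frac{t_i-a_{i0}}{a_{i1}}\bm{d}_{i1}$, and the coefficient $\frac{t_i-a_{i0}}{a_{i1}}$ is regular on $V_i=\phi^{-1}(U_i)$: indeed $t_i$ is regular on the cover $V_i$, $a_{i0}\in\mcO_{U_i}$, and $a_{i1}$ is a unit on $U_i$ because the representation is good. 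Hence over $V_i$ the second generator is absorbed into the first, $\bm{d}_{i1}$ generates $\mcL|_{V_i}$, and since $\mcL$ has rank one this generator is free. I then define $\tilde{\varphi}_i\colon\mcL|_{V_i}\overset{\sim}{\to}\mcO_{V_i}$ by $\tilde{\varphi}_i(\bm{d}_{i1})=1$. The transition function is then a one-line computation: $\tilde{\varphi}_j^{-1}(1)=\bm{d}_{j1}$, and expanding the first column of (\ref{eq:transform}) gives $\bm{d}_{j1}=g_{ij,11}\bm{d}_{i1}+g_{ij,21}\bm{d}_{i2}$; substituting (\ref{eq:basis}) and collecting the factor $\bm{d}_{i1}$ yields $\tilde{\varphi}_i\circ\tilde{\varphi}_j^{-1}(1)=g_{ij,11}+g_{ij,21}\frac{t_i-a_{i0}}{a_{i1}}$, which is exactly (\ref{eq:mcLi}).

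For part \ref{cor:corr_morphism}, I would simply take $\upsilon$ to be $\eta$ regarded as an isomorphism of $\mcO_Y$-modules onto $\phi_\ast\mcL=\Ima(\eta)$. Then $\upsilon|_{U_i}\circ\varphi_i^{-1}\binom{1}{0}=\bm{d}_{i1}$ and $\upsilon|_{U_i}\circ\varphi_i^{-1}\binom{0}{1}=\bm{d}_{i2}=\frac{t_i-a_{i0}}{a_{i1}}\bm{d}_{i1}$; applying $\tilde{\varphi}_i$, which sends $\bm{d}_{i1}\mapsto1$, gives the two stated values $1$ and $\frac{t_i-a_{i0}}{a_{i1}}$.

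These computations are routine; the only point demanding real care, and the place I would concentrate, is the verification that each $\tilde{\varphi}_i$ is a well-defined \emph{isomorphism} of sheaves on $V_i$, that is, that regularity of $\frac{t_i-a_{i0}}{a_{i1}}$ on $V_i$ genuinely makes $\bm{d}_{i1}$ a free generator of $\mcL|_{V_i}$. This is precisely where the hypothesis that the representation is good (so that $a_{i1}$ is a unit) enters, and it is also what guarantees mutual compatibility of the $\tilde{\varphi}_i$, since the transition function computed above then lies in $\Gamma(V_i\cap V_j,\mcO_X^\times)$ as required of a line bundle on $X$.
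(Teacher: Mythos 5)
Your proposal is correct and follows essentially the same route as the paper: both realize $\mcL$ as $\Ima(\eta)\subset\CC(X)$ from the proof of Proposition~\ref{prop:corr}, define $\tilde{\varphi}_i$ by $\bm{d}_{i1}\mapsto 1$ using (\ref{eq:basis}) and goodness to see that $\bm{d}_{i1}$ freely generates $\mcL|_{V_i}$, compute the transition function as $\bm{d}_{j1}/\bm{d}_{i1}$ via (\ref{eq:transform}) and (\ref{eq:basis}), and take $\upsilon$ to be $\eta$ viewed as an isomorphism onto $\phi_\ast\mcL$. Your extra attention to the regularity of $(t_i-a_{i0})/a_{i1}$ on $V_i$ and to the transition functions being units only makes explicit what the paper leaves implicit.
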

\begin{proof}
Let $\eta:\mcM\to\CC(X)$ and $\bm{d}_{i1}, \bm{d}_{i2}$ be as in the proof of Proposition~\ref{prop:corr}. 
By the proof of Proposition~\ref{prop:corr}, $\phi_\ast\mcL$ is  isomorphic to the image of $\eta$, and $\eta$ gives a natural isomorphism $\upsilon:\mcM\to\phi_\ast\mcL$.  

Since $\mcL|_{V_i}$ is generated by $\bm{d}_{i1}$ in $\CC(X)$ by (\ref{eq:basis}), an isomorphism $\tilde{\varphi}_i:\mcL|_{V_i}\to\mcO_{V_i}$ is defined by $\tilde{\varphi}_i(\bm{d}_{i1})=1$. 
Then $\tilde{\varphi}_i\circ\tilde{\varphi}_j^{-1}(1)=\tilde{\varphi}_i(\bm{d}_{j1})=\bm{d}_{j1}/\bm{d}_{i1}$ on $V_i\cap V_j$. 
By (\ref{eq:transform}) and (\ref{eq:basis}), we obtain
\begin{align*}
	\frac{\bm{d}_{j1}}{\bm{d}_{i1}}=g_{ij,11}+g_{ij,21}\,\frac{t_i-a_{i0}}{a_{i 1}}. 
\end{align*}
Thus \ref{cor:corr_transformation} holds. 
For \ref{cor:corr_morphism},
we have 
\begin{align*}
		\tilde{\varphi}_i\circ\upsilon|_{U_i}\circ\varphi_i^{-1}\begin{pmatrix} 1 \\ 0 \end{pmatrix}
		&=
		\tilde{\varphi}_i(\bm{d}_{i1})=1,
		\\
		\tilde{\varphi}_i\circ\upsilon|_{U_i}\circ\varphi_i^{-1}\begin{pmatrix} 0 \\ 1 \end{pmatrix}&=\tilde{\varphi}_i(\bm{d}_{i2})=\frac{\bm{d}_{i2}}{\bm{d}_{i1}}=\frac{t_i-a_{i0}}{a_{i1}}.
\end{align*}
This completes the proof. 
\end{proof}

\begin{ex}\label{ex:rami}
Let $\phi:X\to Y$ be a non-singular double cover. 
Let $\{U_i\}_{i\in I}$ be an affine open covering of $Y$ such that $\phi_\ast\mcO_X|_{U_i}\cong\mcO_{U_i}\oplus\mcO_{U_i} t_i$ for each $i\in I$. 
The ramification divisor $R_\phi$ is defined by $t=0$. 
Hence we can take 
\[ \bm{d}_{i1}:=\frac{1}{t_i}, \qquad \bm{d}_{i2}:=t_i\bm{d}_{i1}=1 \]
as local basis of $\phi_\ast\mcO_X(R_\phi)$ in $\CC(X)$ as an $\mcO_Y$-module. 
Then we have
\[ 
 \begin{pmatrix}
 	\bm{d}_{j1} & \bm{d}_{j2}
 \end{pmatrix}
 =
 \begin{pmatrix}
 	\bm{d}_{i1} & \bm{d}_{i2}
 \end{pmatrix}
 \begin{pmatrix}
 	\xi_{ij}^{-1} & 0 \\ 0 & 1
 \end{pmatrix}, 
 \quad\ 
 t_i
 \begin{pmatrix}
 	\bm{d}_{i1} & \bm{d}_{i2}
 \end{pmatrix}
 =
 \begin{pmatrix}
 	\bm{d}_{i1} & \bm{d}_{i2}
 \end{pmatrix}
 \begin{pmatrix}
 	0 & F_i \\ 1 & 0
 \end{pmatrix}.
\]
Therefore, $\mcO_X(R_\phi)$ is associated to the admissible pair $(\mcO_Y(L)\oplus\mcO_Y, M)$ for $\phi$, where $M$ is given by 
\[ M=
 \begin{pmatrix}
 	0 & F \\ 1 & 0
 \end{pmatrix}:\mcO_Y\oplus\mcO_Y(-L)\to\mcO_Y(L)\oplus\mcO_Y.
 \]
\end{ex}

We define a normal representation of an admissible pair. In Corollary~\ref{cor:normal} below, we will prove that any admissible pair has a normal representation. 

\begin{defin}
Let $(\mcM,M)$ is an admissible pair for a non-singular double cover $\phi:X\to Y$, and let $F\in\HH^0(Y,\mcO_Y(B_\phi))$ be a global section defining $B_\phi$. 
A representation $(\{G_{ij}\},\{M_i\})_{\gtU}$ is said to be \textit{normal} if 
\begin{align*}
	M_i=
	\begin{pmatrix}
		0 & F_i \\ 1 & 0
	\end{pmatrix}
\end{align*}
for each $i\in I$, where $F_i:=F|_{U_i}$. 
\end{defin}

It is clear that a normal representation is good. 
The following lemma gives a criterion for equivalence of two normal representations. 

\begin{lem}\label{lem:equivalence}
Let $(\mcM,M)$ and $(\mcN,N)$ be two admissible pairs for a non-singular double cover $\phi:X\to Y$, and let $(\{G_{ij}\},\{M_i\})_{\gtU}$ and $(\{H_{ij}\},\{N_i\})_{\gtU}$ be normal representations of $(\mcM,M)$ and $(\mcN,N)$, respectively,
where $\gtU=\{U_i\}_{i\in I}$ is an affine open covering of $Y$. 
Then $(\mcM,M)$ and $(\mcN,N)$ are equivalent if and only if there exist $\alpha_i,\beta_i\in \Gamma(U_i,\mcO_Y)$ for any $i\in I$ such that $\alpha_i^2-\beta_i^2 F_i\in\Gamma(U_i,\mcO_Y^\times)$ and 
\begin{align*}
	\begin{pmatrix}
		\alpha_i & \beta_i F_i \\ \beta_i & \alpha_i
	\end{pmatrix}
	G_{ij}
	&=
	H_{ij}
	\begin{pmatrix}
		\alpha_j & \beta_j F_j \\ \beta_j & \alpha_j
	\end{pmatrix}
	\quad (i,j \in I). 
\end{align*}
\end{lem}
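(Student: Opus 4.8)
The plan is to encode an equivalence $\Psi:\mcM\to\mcN$ by its local matrices and to read off the two defining properties of an equivalence directly from the given normal representations. First I would fix the $\mcO_Y$-trivializations $\varphi_i:\mcM|_{U_i}\overset{\sim}{\to}\mcO_{U_i}^{\oplus2}$ and $\psi_i:\mcN|_{U_i}\overset{\sim}{\to}\mcO_{U_i}^{\oplus2}$ underlying the two normal representations, so that $G_{ij}=\varphi_i\circ\varphi_j^{-1}$, $H_{ij}=\psi_i\circ\psi_j^{-1}$, and both $M$ and $N$ are locally the matrix $J_i:=\begin{pmatrix}0&F_i\\1&0\end{pmatrix}$. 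Any $\mcO_Y$-morphism $\Psi$ is then recorded by $\Phi_i:=\psi_i\circ\Psi|_{U_i}\circ\varphi_i^{-1}\in\Hom(\mcO_{U_i}^{\oplus2},\mcO_{U_i}^{\oplus2})$, and $\Psi$ is a globally well-defined isomorphism exactly when each $\Phi_i\in\GL(2,\mcO_{U_i})$ and the gluing relation $\Phi_iG_{ij}=H_{ij}\Phi_j$ holds on every overlap $U_i\cap U_j$.

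Next I would localize the intertwining relation $\Psi\circ M=N\circ\Psi(-L)$. The one point requiring care is that $\Psi(-L)$ is recorded by the same matrix $\Phi_i$ as $\Psi$: twisting by $\mcO_Y(-L)$ multiplies both transition systems by the same $\xi_{ij}$ and uses the compatibly twisted trivializations $\varphi_i(-L)$, $\psi_i(-L)$, so the local matrix is unchanged. Composing local forms then converts $\Psi\circ M=N\circ\Psi(-L)$ into $\Phi_iM_i=N_i\Phi_i$, that is, $\Phi_iJ_i=J_i\Phi_i$ for every $i$. A direct computation of the commutant of $J_i$ shows that $\Phi_iJ_i=J_i\Phi_i$ forces $\Phi_i=\begin{pmatrix}\alpha_i&\beta_iF_i\\\beta_i&\alpha_i\end{pmatrix}$ for some $\alpha_i,\beta_i\in\Gamma(U_i,\mcO_Y)$, and conversely every such matrix commutes with $J_i$. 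Since $\det\Phi_i=\alpha_i^2-\beta_i^2F_i$, the invertibility of $\Phi_i$ is precisely the condition $\alpha_i^2-\beta_i^2F_i\in\Gamma(U_i,\mcO_Y^\times)$.

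Putting these together finishes both directions. For the forward implication, an equivalence $\Psi$ yields matrices $\Phi_i$ of the stated shape with $\det\Phi_i$ a unit, and substituting $\Phi_i$ into the gluing relation $\Phi_iG_{ij}=H_{ij}\Phi_j$ gives exactly the displayed identity. Conversely, given $\alpha_i,\beta_i$ with $\alpha_i^2-\beta_i^2F_i$ a unit and satisfying that identity, I would set $\Phi_i:=\begin{pmatrix}\alpha_i&\beta_iF_i\\\beta_i&\alpha_i\end{pmatrix}$; these lie in $\GL(2,\mcO_{U_i})$, satisfy the cocycle relation by hypothesis, hence glue to an isomorphism $\Psi:\mcM\to\mcN$, and they commute with $J_i$, which is the local form of $\Psi\circ M=N\circ\Psi(-L)$. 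I expect no serious obstacle beyond the bookkeeping of the $-L$ twist; the essential content is the elementary determination of the commutant of $J_i$, which produces exactly the two-parameter family $\begin{pmatrix}\alpha_i&\beta_iF_i\\\beta_i&\alpha_i\end{pmatrix}$ appearing in the statement.
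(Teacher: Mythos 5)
Your proposal is correct and follows essentially the same route as the paper: both encode $\Psi$ by local matrices, use the normal form $M_i=N_i=\begin{pmatrix}0&F_i\\1&0\end{pmatrix}$ to reduce the intertwining condition to a commutant computation forcing the shape $\begin{pmatrix}\alpha_i&\beta_iF_i\\\beta_i&\alpha_i\end{pmatrix}$, and read the displayed identity off the gluing relation. Your explicit check that the $-L$ twist leaves the local matrix of $\Psi(-L)$ unchanged is a point the paper passes over silently, but it does not change the argument.
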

\begin{proof}
Suppose that $(\mcM,M)$ and $(\mcN,N)$ are equivalent. 
Then there exists an isomorphism $\Psi:\mcM\to\mcN$ such that $\Psi{\circ} M=N{\circ}\Psi(-L)$. 
For each $i\in I$, $\Psi|_{U_i}$ is represented by $W_i\in\GL(2,\mcO_{U_i})$
\begin{align*}
	W_i:=
	\begin{pmatrix}
		\alpha_i & \gamma_i \\ \beta_i & \delta_i
	\end{pmatrix}
	: \mcO_{U_i}\oplus\mcO_{U_i}\cong \mcM|_{U_i} \overset{\Psi}{\to} \mcN|_{U_i} \cong \mcO_{U_i}\oplus\mcO_{U_i}
\end{align*}
satisfying $W_iM_i=N_iW_i$ and $\det(W_i)\in\Gamma(U_i,\mcO_Y^\times)$. 
By the definition of normal representations, we obtain 
\begin{align}\label{eq:Wi}
	W_i=
	\begin{pmatrix}
		\alpha_i & \beta_i F_i \\ \beta_i & \alpha_i
	\end{pmatrix} 
\end{align}
with $\alpha_i^2-\beta_i^2 F_i\in\Gamma(U_i,\mcO_Y^\times)$. 
Moreover $W_i$ ($i\in I$) satisfy $W_iG_{ij}=H_{ij}W_j$. 

Conversely, suppose that there exist $W_i\in\GL(U_i,\mcO_Y^\times)$ ($i\in I$) defined by (\ref{eq:Wi}) such that $W_iG_{ij}=H_{ij}W_j$ for each $i,j\in I$. 
It is clear that $W_iM_i=N_iW_i$. 
Therefore, $W_i$ ($i\in I$) defines an isomorphism $\Psi:\mcM\to\mcN$ with $\Psi{\circ}M=N{\circ}\Psi(-L)$, and $(\mcM,M)$ and $(\mcN,N)$ are equivalent. 
\end{proof}

\section{Group structure of $\ad_\phi(Y)$}\label{sec:group_structure}

Let $\phi:X\to Y$ be a non-singular double cover. 
The set $\ad_\phi(Y)$ has a group structure induced by one of $\pic(X)$ through {\rev the bijection} $\Upsilon$ in Proposition~\ref{prop:corr}. 
We next investigate the group structure of $\ad_\phi(Y)$. 
Let $\big(\mcM^{(k)},M^{(k)}\big)$ be an admissible pair for a non-singular double cover $\phi:X\to Y$ for each $k=1,\dots,m$. 
Let $\big(\{G_{ij}^{(k)}\},\{M_i^{(k)}\}\big)_\gtU$ be a good representation of $\big(\mcM^{(k)},M^{(k)}\big)$ for each $k=1,\dots,m$, where 
\begin{align*}
	G_{ij}^{(k)}&=
	\begin{pmatrix}
		g_{ij,11}^{(k)} & g_{ij,12}^{(k)} \\ g_{ij,21}^{(k)} & g_{ij,22}^{(k)}
	\end{pmatrix}, 
	&
	M_i^{(k)}&=
	\begin{pmatrix}
		a_{i0}^{(k)} & a_{i2}^{(k)} \\ a_{i1}^{(k)} & -a_{i0}^{(k)}
	\end{pmatrix}
	\quad (k=1,\dots,m). 
\end{align*}
Let $\big(\{G_{ij}^{(0)}\},\{M_i^{(0)}\}\big)_{\gtU}$ be the good representation of {\rev $\Upsilon([\mcO_X])$}, where 
\begin{align*}
	G_{ij}^{(0)}&=
	\begin{pmatrix}
		1 & 0 \\ 0 & \xi_{ij}
	\end{pmatrix},
	&
	M_i^{(0)}&=
	\begin{pmatrix}
		0 & F_i \\ 1 & 0
	\end{pmatrix}. 
\end{align*}
For $2\times 2$ matrices $A_1,\dots,A_m$, put $\prod_{k=1}^mA_k:=A_1A_2\dots A_m$. 
Then the following theorem holds, and {\rev proves} Theorem~\ref{thm:group_law0}. 

\begin{thm}\label{thm:group_law}
With the above notation, put 
\begin{align}
	K_{ij}^{(k)+} &:= 
	\frac{1}{a_{i1}^{(k)}}\bigg( \left( a_{i1}^{(k)}g_{ij,11}^{(k)} - a_{i0}^{(k)}g_{ij,21}^{(k)} \right)E + g_{ij,21}^{(k)}M_i^{(0)} \bigg),
	\label{eq:Kij^k}
\\
	K_{ij}^{(k)-}&:=
	\frac{\xi_{ij}}{a_{i1}^{(k)}\det(G_{ij})}\bigg( \left( a_{i1}^{(k)}g_{ij,11}^{(k)}-a_{i0}^{(k)}g_{ij,21}^{(k)} \right)E -g_{ij,21}^{(k)}M_i^{(0)} \bigg)
	\label{eq:-Kij^k}
\end{align}
for each $k=1,\dots,m$. 
Let $n_1,\dots,n_m$ be $m$ integers, and let $[n]$ be the list $[n_1,\dots,n_m]$. 
Then $\mcL^{[n]}:=\mcL_{(\mcM^{(1)},M^{(1)})}^{\otimes n_1}\otimes\dots\otimes\mcL_{(\mcM^{(m)},M^{(m)})}^{\otimes n_m}$ is associated to the normal representation $\big(\{G_{ij}^{[n]}\},\{M_i^{[n]}\}\big)_\gtU$, where 
\begin{align*}
	G_{ij}^{[n]}&:=
	\prod_{k=1}^{m}\left(K_{ij}^{(k)}(n_k)\right)^{|n_k|} 
	G_{ij}^{(0)},
	&
	M_{i}^{[n]}&:=M_i^{(0)},
\end{align*}
where $K_{ij}^{(k)}(n_k):=K_{ij}^{(k)+}$ if $n_k\geq0$, and $K_{ij}^{(k)}(n_k):=K_{ij}^{(k)-}$ otherwise. 
\end{thm}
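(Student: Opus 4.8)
The plan is to pass from the admissible pairs to the line bundles they determine, carry out the multiplication on $X$, and then translate back. The structural input is that, over each $U_i$, multiplication by $t_i$ on $\phi_\ast\mcO_X|_{U_i}\cong\mcO_{U_i}[t_i]/(t_i^2-F_i)$ is represented in the basis $(1,t_i)$ by $M_i^{(0)}$; hence the assignment $\rho_i\colon P+Rt_i\mapsto PE+RM_i^{(0)}$ is a ring homomorphism (the regular representation of $\phi_\ast\mcO_X|_{U_i}$ in that basis), and in particular $(M_i^{(0)})^2=F_i\,E$. First I would use Corollary~\ref{cor:corr}\ref{cor:corr_transformation}: applied to the good representation $(\{G_{ij}^{(k)}\},\{M_i^{(k)}\})_\gtU$, it gives the transition function of $\mcL_k:=\mcL_{(\mcM^{(k)},M^{(k)})}$ over $V_i\cap V_j$ (with $V_i=\phi^{-1}(U_i)$) as $q_{ij}^{(k)}=g_{ij,11}^{(k)}+g_{ij,21}^{(k)}\,(t_i-a_{i0}^{(k)})/a_{i1}^{(k)}\in\CC(X)^\times$, cf.\ \eqref{eq:mcLi}. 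Since transition functions of line bundles multiply under tensor products, $\mcL^{[n]}$ has transition function $\prod_{k=1}^m(q_{ij}^{(k)})^{n_k}$ over $V_i\cap V_j$.

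Next I would prove the converse of Corollary~\ref{cor:corr}: a line bundle on $X$ whose transition function over $V_i\cap V_j$ is $q_{ij}=P_{ij}+R_{ij}t_i$ (decomposed in $\phi_\ast\mcO_X|_{U_i}$) is associated to the normal representation $\big(\{\rho_i(q_{ij})\,G_{ij}^{(0)}\},\{M_i^{(0)}\}\big)_\gtU$. Indeed, taking $\bm{d}_{i1}$ as local generator and $\bm{d}_{i2}:=t_i\bm{d}_{i1}$, the relations $\bm{d}_{j1}=q_{ij}\bm{d}_{i1}$ and $\bm{d}_{j2}=t_j\bm{d}_{j1}=\xi_{ij}t_iq_{ij}\bm{d}_{i1}$ expand in the basis $(\bm{d}_{i1},\bm{d}_{i2})$ to the matrix $\rho_i(q_{ij})\,G_{ij}^{(0)}$; since $M_i^{(0)}$ realizes multiplication by $t_i$, this is a normal representation, and $\Upsilon$ of the corresponding admissible pair is the class of the given line bundle by Proposition~\ref{prop:corr}. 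Applying this with $q_{ij}=\prod_k(q_{ij}^{(k)})^{n_k}$ and invoking multiplicativity of $\rho_i$ gives $\rho_i\big(\prod_k(q_{ij}^{(k)})^{n_k}\big)=\prod_k\rho_i(q_{ij}^{(k)})^{n_k}$, so it remains only to identify the individual factors with the matrices \eqref{eq:Kij^k} and \eqref{eq:-Kij^k}.

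This last identification is where the real computation lies, and I expect it to be the main obstacle. A direct substitution gives $\rho_i(q_{ij}^{(k)})=K_{ij}^{(k)+}$. For the negative powers I would compute $\rho_i(q_{ij}^{(k)})^{-1}=(\det K_{ij}^{(k)+})^{-1}\mathrm{adj}(K_{ij}^{(k)+})$, where $\det K_{ij}^{(k)+}$ is the norm $q_{ij}^{(k)}\,\iota^\ast q_{ij}^{(k)}=(a_{i1}^{(k)})^{-2}\big((a_{i1}^{(k)}g_{ij,11}^{(k)}-a_{i0}^{(k)}g_{ij,21}^{(k)})^2-(g_{ij,21}^{(k)})^2F_i\big)$. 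Using the admissibility relation $(a_{i0}^{(k)})^2+a_{i1}^{(k)}a_{i2}^{(k)}=F_i$ to simplify, and then reading the $(1,1)$- and $(2,1)$-entries of the compatibility relation $G_{ij}^{(k)}M_j^{(k)}=\xi_{ij}M_i^{(k)}G_{ij}^{(k)}$, one obtains the key identity $a_{i1}^{(k)}\det K_{ij}^{(k)+}=a_{j1}^{(k)}\,\xi_{ij}^{-1}\det G_{ij}^{(k)}$, which turns $\rho_i(q_{ij}^{(k)})^{-1}$ into $K_{ij}^{(k)-}$.

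The one delicate point I would flag is that this identification of $K_{ij}^{(k)-}$ with $\rho_i(q_{ij}^{(k)})^{-1}$ is only valid up to the scalar $a_{j1}^{(k)}/a_{i1}^{(k)}$. This residual scalar is a coboundary on $\gtU$ (of the form $c_i/c_j$ with $c_i=(a_{i1}^{(k)})^{-1}$), so after collecting the factors over $k$ and appending $G_{ij}^{(0)}$, the matrices $\{G_{ij}^{[n]}\}$ differ from genuine transition functions of $\phi_\ast\mcL^{[n]}$ only by a scalar coboundary; in particular they still satisfy the cocycle condition and define an isomorphic bundle. Since such a coboundary is implemented by a scalar isomorphism of admissible pairs, it fixes the class in $\ad_\phi(Y)$ and leaves $M_i=M_i^{(0)}$ untouched. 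Hence $\big(\{G_{ij}^{[n]}\},\{M_i^{(0)}\}\big)_\gtU$ is a normal representation associated to $\mcL^{[n]}$, which is exactly the assertion of the theorem.
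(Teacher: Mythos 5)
Your proposal is correct, and it reaches the theorem by a genuinely different organization than the paper's. The paper proceeds in three steps: Proposition~\ref{prop:tensor} computes the representation of a single tensor product $\mcL_{(\mcM,M)}\otimes\mcL_{(\mcN,N)}$ by multiplying local generators $\bm{d}_{i1}\bm{e}_{i1}$ inside $\CC(X)$; Lemma~\ref{lem:n-tensor} then handles all nonnegative exponents by induction on $n_1+\dots+n_m$; and Proposition~\ref{prop:inverse} produces the factor $K_{ij}^{(k)-}$ indirectly, by identifying $\iota^\ast\mcL_{(\mcM,M)}\cong\mcL_{(\mcM,-M)}$ and computing $\mcL_{(\mcM,M)}\otimes\iota^\ast\mcL_{(\mcM,M)}\cong\phi^\ast\big((\det\mcM)\otimes\mcO_Y(L)\big)$. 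You instead package everything into the regular representation $\rho_i$ of $\phi_\ast\mcO_X|_{U_i}$ and the multiplicativity of transition cocycles in $\CC(X)^\times$: the converse of Corollary~\ref{cor:corr} that you prove is implicit in (but not isolated by) the surjectivity part of Proposition~\ref{prop:corr}, and once $\rho_i(q_{ij}^{(k)})=K_{ij}^{(k)+}$ is checked, commutativity of the factors (they all lie in the commutative subalgebra generated by $M_i^{(0)}$) and the mixed-sign case come for free, with the inverse computed directly by the adjugate rather than through the conjugate bundle. The computational core coincides: your key identity $a_{i1}^{(k)}\det K_{ij}^{(k)+}=a_{j1}^{(k)}\xi_{ij}^{-1}\det G_{ij}^{(k)}$ is exactly the $(2,1)$-entry relation the paper extracts inside the proof of Proposition~\ref{prop:inverse} (the $(1,1)$-entry you mention is not actually needed), and your treatment of the residual scalar $a_{j1}^{(k)}/a_{i1}^{(k)}$ as a coboundary absorbed by a scalar isomorphism is the same mechanism the paper invokes through Lemma~\ref{lem:equivalence}; you are right to flag it, since without it $K_{ij}^{(k)-}$ does not literally equal $\rho_i\big((q_{ij}^{(k)})^{-1}\big)$ and the displayed $G_{ij}^{[n]}$ is only a coboundary twist of the honest transition cocycle of $\phi_\ast\mcL^{[n]}$. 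What your route buys is a uniform, induction-free argument that explains structurally why the $K$-matrices multiply; what the paper's route buys is reusable intermediate statements: Proposition~\ref{prop:tensor} applies with an arbitrary second factor $N_i$ (not just the normal $M_i^{(0)}$), and the isomorphisms (\ref{eq:conjugate})--(\ref{eq:inverse}) of Proposition~\ref{prop:inverse} are needed again later, e.g.\ in Proposition~\ref{prop:splitting} and Theorem~\ref{thm:split}.
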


To prove Theorem~\ref{thm:group_law}, we first compute the admissible pair {\rev $\Upsilon([\mcL^{[1,1]}])$} in the case where $m=2$ and $ {\rev [n]}=[1,1]$. 
For simplicity, put $(\mcM,M):=(\mcM^{(1)},M^{(1)})$ and $(\mcN,N):=(\mcM^{(2)},M^{(2)})$. 
Let $(\{G_{ij}\},\{M_i\})_{\gtU}$ and $(\{H_{ij}\},\{N_i\})_{\gtU}$ be good representations of $(\mcM,M)$ and $(\mcN,N)$, respectively, for an affine open covering $\gtU=\{U_i\}_{i\in I}$ of $Y$. 
Write  
\begin{align*}
	G_{ij}&:=
	\begin{pmatrix}
		g_{ij,11} & g_{ij,12} \\ g_{ij,21} & g_{ij,22}
	\end{pmatrix}, 
	&
	M_i&:=
	\begin{pmatrix}
		a_{i0} & a_{i2} \\ a_{i1} & -a_{i0}
	\end{pmatrix}, 
	&
	N_i&:=
	\begin{pmatrix}
		b_{i0} & b_{i2} \\ b_{i1} & -b_{i0}
	\end{pmatrix}.
\end{align*}

\begin{prop}\label{prop:tensor}
	With the above notation, 
	$\mcL_{(\mcM,M)}\otimes\mcL_{(\mcN,N)}$ is associated to 
the good representation $(\{G_{ij}^\sim\},\{M_{i}^\sim\})_{\gtU}$, where 
	\begin{align*}
		G_{ij}^\sim &:=\frac{1}{a_{i1}}\bigg(\left(a_{i1}g_{ij,11}-a_{i0}g_{ij,21}\right)E+g_{ij,21}N_i\bigg)H_{ij},
		&
		M_i^\sim &:=N_i 
	\end{align*}
\end{prop}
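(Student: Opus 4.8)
The plan is to prove the statement by exhibiting $(\{G_{ij}^\sim\},\{M_i^\sim\})_\gtU$ directly as a representation of the admissible pair $\Upsilon([\mcL\otimes\mcL'])$, where I abbreviate $\mcL:=\mcL_{(\mcM,M)}$ and $\mcL':=\mcL_{(\mcN,N)}$. The guiding idea is that $\mcL$, $\mcL'$ and $\mcL\otimes\mcL'$ all embed canonically in the constant sheaf $\CC(X)$, so the tensor product is realized as the product of rational functions; reading off the transition functions and the multiplication morphism then reduces to multiplying local generators.

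First I would record the generators given by Corollary~\ref{cor:corr}. On $V_i:=\phi^{-1}(U_i)$ the bundle $\mcL$ is generated over $\mcO_{V_i}$ by a rational function $\bm{d}_{i1}$ whose transition function is, by (\ref{eq:mcLi}),
\[ \lambda_{ij}:=\frac{\bm{d}_{j1}}{\bm{d}_{i1}}=g_{ij,11}+g_{ij,21}\,\frac{t_i-a_{i0}}{a_{i1}}. \]
Writing $\mcN=\phi_\ast\mcL'$ with the $\mcO_{U_i}$-frame $(\bm{e}_{i1},\bm{e}_{i2})$ from the proof of Proposition~\ref{prop:corr}, multiplication by $t_i$ acts by $N_i$, the frames transform by $H_{ij}$ through (\ref{eq:transform}), and $\bm{e}_{i1}$ generates $\mcL'$ over $\mcO_{V_i}$ by (\ref{eq:basis}). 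Consequently $\phi_\ast(\mcL\otimes\mcL')|_{U_i}$ is obtained from $\mcN|_{U_i}$ by multiplication with the local generator $\bm{d}_{i1}$, so it carries the $\mcO_{U_i}$-frame $\bm{d}_{i1}(\bm{e}_{i1},\bm{e}_{i2})$.

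With these frames both pieces of data drop out. Multiplication by $t_i$ gives $t_i\,\bm{d}_{i1}(\bm{e}_{i1},\bm{e}_{i2})=\bm{d}_{i1}(\bm{e}_{i1},\bm{e}_{i2})N_i$, whence $M_i^\sim=N_i$. For the transition functions I would rewrite $\lambda_{ij}=\tfrac{1}{a_{i1}}\big((a_{i1}g_{ij,11}-a_{i0}g_{ij,21})+g_{ij,21}t_i\big)$ and use that $t_i$ acts on the frame $(\bm{e}_{i1},\bm{e}_{i2})$ by $N_i$: this identifies right-multiplication by the scalar $\lambda_{ij}$ with right-multiplication by the matrix $P_{ij}:=\tfrac{1}{a_{i1}}\big((a_{i1}g_{ij,11}-a_{i0}g_{ij,21})E+g_{ij,21}N_i\big)$. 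Combining $\bm{d}_{j1}=\lambda_{ij}\bm{d}_{i1}$ with $(\bm{e}_{j1},\bm{e}_{j2})=(\bm{e}_{i1},\bm{e}_{i2})H_{ij}$ yields
\[ \bm{d}_{j1}(\bm{e}_{j1},\bm{e}_{j2})=\bm{d}_{i1}(\bm{e}_{i1},\bm{e}_{i2})\,P_{ij}H_{ij}, \]
so the transition function is exactly $G_{ij}^\sim=P_{ij}H_{ij}$, as claimed.

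It remains to note that this data is a good representation of the correct class. Goodness holds because the lower-left entry of $M_i^\sim=N_i$ is $b_{i1}$, a unit on $U_i$ since $(\{H_{ij}\},\{N_i\})_\gtU$ is good; the cocycle condition for $\{G_{ij}^\sim\}$ is automatic, being transition functions of an honest $\mcO_Y$-module. Thus $(\{G_{ij}^\sim\},\{M_i^\sim\})_\gtU$ represents $(\phi_\ast(\mcL\otimes\mcL'),M_{\mcL\otimes\mcL'})=\Upsilon([\mcL\otimes\mcL'])$, which is the assertion. I expect the one genuinely delicate step to be the identification, inside $\CC(X)$, of multiplication by the scalar $\lambda_{ij}$ on $\mcN$ with right-multiplication by $P_{ij}$; once the three bundles are consistently embedded in $\CC(X)$, everything else is bookkeeping with (\ref{eq:transform}) and (\ref{eq:basis}).
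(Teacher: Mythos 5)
Your proof is correct and follows essentially the same route as the paper: both realize $\mcL_{(\mcM,M)}\otimes\mcL_{(\mcN,N)}$ inside $\CC(X)$ with the local frames $\bm{d}_{i1}\bm{e}_{i1},\bm{d}_{i1}\bm{e}_{i2}$, read off $M_i^\sim=N_i$ from multiplication by $t_i$, and obtain $G_{ij}^\sim$ from the factor $\frac{1}{a_{i1}}\big((a_{i1}g_{ij,11}-a_{i0}g_{ij,21})E+g_{ij,21}N_i\big)$, which is exactly the paper's $(g_{ij,11}E+g_{ij,21}R_{(M_i,N_i)})$. Your only (harmless) streamlining is to invoke the scalar transition function of Corollary~\ref{cor:corr} and substitute $t_i\mapsto N_i$, where the paper instead runs the explicit $2\times4$ block computation with $R_{(M_i,N_i)}$.
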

\begin{proof}
Let $\eta_M:\mcM\to\CC(X)$ and $\eta_N:\mcN\to\CC(X)$ be embeddings such that $t_i\eta_M=\eta_M\circ M_i$ and $t_i\eta_N=\eta_N\circ N_i$ on $U_i$ as the proof of Proposition~\ref{prop:corr}, and put 
\begin{align*}
	\bm{d}_{i1}&:= \eta_M \begin{pmatrix} 1 \\ 0\end{pmatrix}, 
	&
	\bm{d}_{i2}&:= \eta_M \begin{pmatrix} 0 \\ 1\end{pmatrix}, 
	&
	\bm{e}_{i1}&:= \eta_N \begin{pmatrix} 1 \\ 0 \end{pmatrix}, 
	&
	\bm{e}_{i2}&:= \eta_N \begin{pmatrix} 0 \\ 1 \end{pmatrix} 
\end{align*}
on $U_i$ for $i\in I$, where $\mcM_i:=\mcM|_{U_i}$ and $\mcN_i:=\mcN|_{U_i}$ are identified with $\mcO_{U_i}^{\oplus 2}$. 
Put $V_i:=\phi^{-1}(U_i)$ for $i\in I$. 
Then $\mcL_{(\mcM,M)}$ and $\mcL_{(\mcN,N)}$ are generated by $\bm{d}_{i 1}$ and $\bm{e}_{i1}$ on $V_i$, respectively, as $\mcO_X$-modules in $\CC(X)$.  
The line bundle $\mcL_{(\mcM,M)}\otimes\mcL_{(\mcN,N)}$ is locally generated by $\bm{d}_{i1}\bm{e}_{i1}$ on $V_{i}$ as an $\mcO_X$-module in $\CC(X)$. 
Thus $\mcE:=\phi_\ast\big(\mcL_{(\mcM,M)}\otimes\mcL_{(\mcN,N)}\big)$ is locally generated by $\bm{d}_{i 1}\bm{e}_{i1}$ and $t_i\bm{d}_{i 1}\bm{e}_{i1}$ on $U_{i}$ as an $\mcO_{Y}$-submodule of $\CC(X)$. 
By $t_i\eta_M=\eta_M\circ M_i$ and $t_i\eta_N=\eta_N\circ N_i$, we have 
\begin{align}\label{eq:11_12}
	\begin{aligned}
	t_i\bm{d}_{i1}\bm{e}_{i1}
	&=a_{i0}\bm{d}_{i1}\bm{e}_{i1}+a_{i1}\bm{d}_{i2}\bm{e}_{i1}
	=b_{i0}\bm{d}_{i1}\bm{e}_{i1}+b_{i1}\bm{d}_{i1}\bm{e}_{i2}, 
	\\
	t_i\bm{d}_{i1}\bm{e}_{i2}
	&=a_{i0}\bm{d}_{i1}\bm{e}_{i2}+a_{i1}\bm{d}_{i2}\bm{e}_{i2}
	=b_{i2}\bm{d}_{i1}\bm{e}_{i1}-b_{i0}\bm{d}_{i1}\bm{e}_{i2}. 
	\end{aligned}
\end{align}
Hence we obtain 
\begin{align*}
	\bm{d}_{i1}\bm{e}_{i2}&=\frac{t_i-b_{i0}}{b_{i1}}\bm{d}_{i1}\bm{e}_{i1}.
\end{align*}
Thus $\mcE$ is locally generated by $\bm{d}_{i1}\bm{e}_{i1}, \bm{d}_{i1}\bm{e}_{i2}$ on $U_{i}$ for any $i\in I$. 
Let 
$\bm{d}_{i k}^\sim:=\bm{d}_{i1}\bm{e}_{ik}$ for $i\in I$ and $k=1,2$. 
By (\ref{eq:11_12}), the equation
\begin{align*}
	\begin{pmatrix}
		\bm{d}_{i 1}^\sim & \bm{d}_{i 2}^\sim
	\end{pmatrix}
	R_{(M_i,N_i)}
	&=
	\begin{pmatrix}
		\bm{d}_{i2}\bm{e}_{i1} & \bm{d}_{i 2}\bm{e}_{i2}
	\end{pmatrix}
	\qquad \left( R_{(M_i,N_i)}:=\frac{1}{a_{i1}}(-a_{i0}E+N_i) \right)
\end{align*}
holds for $i\in I$. 
We construct the set of matrices $\{G_{ij}^\sim\}_{i,j\in I}$ with $\begin{pmatrix}\bm{d}_{j 1}^\sim & \bm{d}_{j 2}^\sim \end{pmatrix}=\begin{pmatrix}\bm{d}_{i 1}^\sim & \bm{d}_{i 2}^\sim \end{pmatrix}G_{ij}^\sim$ for $i,j\in I$. 
For $i,j\in I$, 
\begin{align*}
	&
	\begin{pmatrix}
		\bm{d}_{j1}^\sim & \bm{d}_{j2}^\sim
	\end{pmatrix}
	\begin{pmatrix}
		E &	R_{(M_j,N_j)}
	\end{pmatrix}
	\\
	=&
	\begin{pmatrix}
		\bm{d}_{j1} & \bm{d}_{j2}
	\end{pmatrix}
	\begin{pmatrix}
		\bm{e}_{j1} & \bm{e}_{j2} & 0 & 0 \\ 
		0 & 0 & \bm{e}_{j1} & \bm{e}_{j2}
	\end{pmatrix}
	\\
	=&
	\begin{pmatrix}
		\bm{d}_{i1} & \bm{d}_{i2}
	\end{pmatrix}
	G_{ij}
	\begin{pmatrix}
		\bm{e}_{i1} & \bm{e}_{i2} & 0 & 0 \\ 
		0 & 0 & \bm{e}_{i1} & \bm{e}_{i2}
	\end{pmatrix}
	\begin{pmatrix}
		H_{ij} & O \\ O & H_{ij}
	\end{pmatrix}
	\\
	=&
	\begin{pmatrix}
		\bm{d}_{i1}^\sim & \bm{d}_{i2}^\sim
	\end{pmatrix}
	\begin{pmatrix}
		E &	R_{(M_i,N_i)}
	\end{pmatrix}
	\begin{pmatrix}
		g_{ij,11}H_{ij} & g_{ij,12}H_{ij} \\ g_{ij,21}H_{ij} & g_{ij,22}H_{ij}
	\end{pmatrix}.
\end{align*}
Therefore we obtain 
\begin{align*}
	\begin{pmatrix}
		\bm{d}_{j1}^\sim & \bm{d}_{j2}^\sim
	\end{pmatrix}
	&=
	\begin{pmatrix}
		\bm{d}_{i1}^\sim & \bm{d}_{i2}^\sim
	\end{pmatrix}
	\begin{pmatrix}
		E & R_{(M_i,N_i)}
	\end{pmatrix}
	\begin{pmatrix}
		g_{ij,11}H_{ij} \\ g_{ij,21}H_{ij}
	\end{pmatrix}. 
\end{align*}
Note that $G_{ij}^\sim=(g_{ij,11}E+g_{ij,21}R_{(M_i,N_i)})H_{ij}$. 
Hence we obtain $\begin{pmatrix} \bm{d}_{j1}^\sim & \bm{d}_{j2}^\sim \end{pmatrix}=\begin{pmatrix} \bm{d}_{i1}^\sim & \bm{d}_{i2}^\sim \end{pmatrix}G_{ij}^\sim$. 
By the definition of $R_{(M_i,N_i)}$, $G_{ij}^\sim$ is defined on $U_i\cap U_j$. 
Note that $G_{ii}^\sim=E$ and 
	$\begin{pmatrix}\bm{d}_{i1}^\sim & \bm{d}_{i2}^\sim \end{pmatrix}G_{ij}^\sim G_{jk}^{\sim}
	=
	\begin{pmatrix}\bm{d}_{k1}^\sim & \bm{d}_{k2}^\sim \end{pmatrix}$ for any $i,j,k\in I$. 
Since $\bm{d}_{j1},\bm{d}_{j2}$ are linearly independent over $\CC(Y)$, we have $G_{ij}^\sim G_{jk}^\sim=G_{ik}^\sim$, and $G_{ij}^\sim\in\GL(2,\mcO_{U_i\cap U_j})$. 
Hence $\{G_{ij}^\sim\}_{i,j\in I}$ defines $\mcE$. 
By (\ref{eq:11_12}), it is clear that 
\begin{align*}
t_i
\begin{pmatrix}
	\bm{d}_{i1}^\sim & \bm{d}_{i2}^\sim
\end{pmatrix}
=
\begin{pmatrix}
	\bm{d}_{i1}^\sim & \bm{d}_{i2}^\sim
\end{pmatrix}
N_i. 
 \end{align*}
Hence, putting $M_i^\sim:=N_i$, the pair $(\{G_{ij}^\sim\},\{M_i^\sim\})_\gtU$ is an admissible pair corresponding to $\mcL_{(\mcM,M)}\otimes\mcL_{(\mcN,N)}$. 
\end{proof}

We can prove Theorem~\ref{thm:group_law} in the case where $n_k\geq 0$ for $k=1,\dots,m$ by Proposition~\ref{prop:tensor}. 

\begin{lem}\label{lem:n-tensor}
If $n_k\geq 0$ for any $k=1,\dots,m$, then the line bundle $\mcL^{[n]}$ is associated to the normal representation $\big(\{G_{ij}^{[n]}\},\{M_i^{[n]}\}\big)_\gtU$. 
\end{lem}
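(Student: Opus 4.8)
The plan is to prove the lemma by induction on the total degree $N := n_1 + \dots + n_m$, applying Proposition~\ref{prop:tensor} once at each step and peeling off a single tensor factor. For the base case $N = 0$ we have $[n] = [0,\dots,0]$ and $\mcL^{[n]} = \mcO_X$; since $\big(\{G_{ij}^{(0)}\},\{M_i^{(0)}\}\big)_\gtU$ is the (normal, hence good) representation of $\Upsilon([\mcO_X])$ and $\prod_{k=1}^m\big(K_{ij}^{(k)+}\big)^0 G_{ij}^{(0)} = G_{ij}^{(0)}$, the claim holds.

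For the inductive step I would let $\ell$ be the least index with $n_\ell > 0$ and set $[n'] := [n_1,\dots,n_{\ell}-1,\dots,n_m]$, so that $n_1 = \dots = n_{\ell-1} = 0$ and $\mcL^{[n]} \cong \mcL_{(\mcM^{(\ell)},M^{(\ell)})} \otimes \mcL^{[n']}$. By the inductive hypothesis $\mcL^{[n']}$ is associated to the normal representation $\big(\{G_{ij}^{[n']}\},\{M_i^{(0)}\}\big)_\gtU$. Then I apply Proposition~\ref{prop:tensor} with first factor $\mcL_{(\mcM^{(\ell)},M^{(\ell)})}$, equipped with its good representation $\big(\{G_{ij}^{(\ell)}\},\{M_i^{(\ell)}\}\big)_\gtU$, and second factor $\mcL^{[n']}$, equipped with the normal representation above, i.e.\ with $H_{ij} = G_{ij}^{[n']}$ and $N_i = M_i^{(0)}$. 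The normal representation is good (its $(2,1)$-entry equals the unit $1$), so it is a legitimate second factor.

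The crux is that keeping the second factor in normal form is exactly what makes the construction close up. Proposition~\ref{prop:tensor} returns $M_i^\sim = N_i = M_i^{(0)}$, so the output is again normal; and substituting $N_i = M_i^{(0)}$ into $G_{ij}^\sim = \big(g_{ij,11}^{(\ell)}E + g_{ij,21}^{(\ell)} R_{(M_i^{(\ell)},M_i^{(0)})}\big)H_{ij}$, with $R_{(M_i^{(\ell)},M_i^{(0)})} = \tfrac{1}{a_{i1}^{(\ell)}}\big(-a_{i0}^{(\ell)}E + M_i^{(0)}\big)$, the bracket collapses to precisely $K_{ij}^{(\ell)+}$ as defined in (\ref{eq:Kij^k}). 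Hence $G_{ij}^\sim = K_{ij}^{(\ell)+}G_{ij}^{[n']}$, and because $n_1 = \dots = n_{\ell-1} = 0$ this equals $\prod_{k=1}^m\big(K_{ij}^{(k)+}\big)^{n_k}G_{ij}^{(0)} = G_{ij}^{[n]}$, which completes the induction.

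I do not expect a genuine geometric obstacle here: Proposition~\ref{prop:corr} already guarantees that whatever representation Proposition~\ref{prop:tensor} produces is associated to $\mcL^{[n]}$, so the entire content is the matrix bookkeeping of how left-multiplication by $K_{ij}^{(\ell)+}$ accumulates. The only delicate points are the two substitutions above—that a normal representation is good, and that the general transition formula specializes to $K_{ij}^{(\ell)+}$ under $N_i = M_i^{(0)}$—and these are the sole places where the explicit shape of the normal form $M_i^{(0)}$ is used.
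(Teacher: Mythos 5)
Your proof is correct and takes essentially the same route as the paper's: induction on $n_1+\dots+n_m$, peeling off the factor of minimal positive index and applying Proposition~\ref{prop:tensor} with the good representation of $\big(\mcM^{(\ell)},M^{(\ell)}\big)$ as first factor and the (normal, hence good) representation of $\mcL^{[n']}$ as second factor, so that $M_i^\sim=M_i^{(0)}$ keeps the output normal. You merely make explicit two points the paper leaves implicit, namely that the bracket in $G_{ij}^\sim$ specializes to $K_{ij}^{(\ell)+}$ when $N_i=M_i^{(0)}$, and that left-multiplication at the minimal index yields exactly $G_{ij}^{[n]}=K_{ij}^{(\ell)+}G_{ij}^{[n']}$.
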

\begin{proof}
Since $n_k\geq 0$, we have $K_{ij}^{(k)}(n_k)=K_{ij}^{(k)+}$ for each $k=1,\dots,m$. 
Thus, in this case, 
\begin{align*}
	G_{ij}^{[n]}&:=
	\prod_{k=1}^m\left(K_{ij}^{(k)+}\right)^{n_k}G_{ij}^{(0)}, 
	&
	M_{i}^{[n]}&:=M_i^{(0)}. 
\end{align*}
We prove this lemma by induction on $n_1+\dots+n_m$. 
In the case of $n_1+\dots+n_m=0$, the assertion holds since $n_1=\dots=n_m=0$. 
Suppose that $n_1+\dots+n_m>0$. 
Put $k_0:=\min\{k\mid n_k>0\}$. 
By the assumption of the induction, $\mcL^{[n']}$ is associated to $\big( \{G_{ij}^{[n']}\}, \{M_i^{[n']}\} \big)_\gtU$, 
where $[n']=[0,\dots,0,n_{k_0}-1,n_{k_0+1},\dots,n_m]$. 
Since $\mcL^{[n]}=\mcL_{(\mcM^{(k_0)},M^{(k_0)})}\otimes\mcL^{[n']}$ and $M_i^{[n']}=M_i^{(0)}$, 
$\mcL^{[n]}$ is associated to $\big( \{\mcM^{[n]}\}, \{M^{[n]}\} \big)_\gtU$ by Proposition~\ref{prop:tensor}. 
\end{proof}

\begin{cor}\label{cor:normal}
For any admissible pair $(\mcM,M)$ for a non-singular double cover $\phi$, there exists a normal representation of $(\mcM,M)$. 
\end{cor}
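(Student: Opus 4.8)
The plan is to deduce the corollary directly from Lemma~\ref{lem:n-tensor}, whose construction already outputs a representation in normal form. The crucial observation is that the representation $\big(\{G_{ij}^{[n]}\},\{M_i^{[n]}\}\big)_\gtU$ produced there has $M_i^{[n]}=M_i^{(0)}=\begin{pmatrix} 0 & F_i \\ 1 & 0 \end{pmatrix}$ for every $i\in I$, which is precisely the condition defining a normal representation. So it suffices to realize an arbitrary admissible pair as one of the pairs to which that lemma applies.

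First I would use Proposition~\ref{prop:corr}: since $\Upsilon$ is bijective, the given pair satisfies $[(\mcM,M)]=\Upsilon([\mcL])$ for a (unique) line bundle $\mcL=\mcL_{(\mcM,M)}$ on $X$. By Lemma~\ref{lem:good-rep} I may fix a good representation of $(\mcM,M)$. Applying Lemma~\ref{lem:n-tensor} with $m=1$ and $n_1=1$ to this single pair then yields that $\mcL^{[1]}=\mcL_{(\mcM,M)}$ is associated to the representation $\big(\{G_{ij}^{[1]}\},\{M_i^{(0)}\}\big)_\gtU$, which is normal by the observation above.

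It remains to upgrade ``associated to a normal representation'' into ``$(\mcM,M)$ admits a normal representation.'' Let $(\mcM',M')$ be the admissible pair glued from this normal representation. By construction $\Upsilon([\mcL])=[(\mcM',M')]$, and since also $\Upsilon([\mcL])=[(\mcM,M)]$, injectivity of $\Upsilon$ gives an equivalence $\Psi:\mcM\to\mcM'$ with $\Psi\circ M=M'\circ\Psi(-L)$. Composing the trivializations $\varphi_i'$ of $\mcM'$ with $\Psi$ produces trivializations $\varphi_i:=\varphi_i'\circ\Psi|_{U_i}$ of $\mcM$; these leave the transition functions unchanged, and the identity $\Psi\circ M\circ\Psi(-L)^{-1}=M'$ shows that the local matrices computed from (\ref{eq:Mi}) still equal $M_i^{(0)}$. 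Hence the transported representation of $(\mcM,M)$ is again normal.

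I do not expect a genuine obstacle here: the substantive work is carried entirely by Lemma~\ref{lem:n-tensor}, and the only point needing care is the final diagram chase, namely that a representation may be transported along an equivalence of admissible pairs without disturbing the matrices $\{M_i\}$ (hence preserving normality). This uses nothing beyond the defining compatibility $\Psi\circ M=M'\circ\Psi(-L)$ together with the functoriality of the matrix assignments (\ref{eq:Gij}) and (\ref{eq:Mi}).
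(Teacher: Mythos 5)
Your proposal is correct and follows essentially the same route as the paper, whose proof is exactly the one-line application of Lemma~\ref{lem:n-tensor} with $m=1$ and $[n]=[1]$. Your final step---transporting the normal representation along the equivalence $\Psi$ furnished by injectivity of $\Upsilon$, and checking via $\Psi\circ M=M'\circ\Psi(-L)$ that the matrices $M_i$ remain $M_i^{(0)}$---is a valid and careful elaboration of a point the paper leaves implicit, not a different argument.
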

\begin{proof}
In the case where $m=1$ and $\big(\mcM^{(1)},M^{(1)}\big):=(\mcM,M)$, Lemma~\ref{lem:n-tensor} shows that $\mcL^{[1]}=\mcL_{(\mcM,M)}$ is associated to $\big( \{G_{ij}^{[1]}\}, \{M_i^{[1]}\} \big)_{\gtU}$ which is normal. 
\end{proof}

\begin{prop}\label{prop:inverse}
Let $(\mcM,M)$ be an admissible pair for a non-singular double cover $\phi:X\to Y$, and let $(\{G_{ij}\},\{M_i\})_\gtU$ be a good representation of $(\mcM,M)$.  
Let $\iota:X\to X$ be the covering transformation of $\phi$. 
Then the followings hold:
\begin{align}
	\iota^\ast\mcL_{(\mcM,M)} &\cong \mcL_{(\mcM,-M)}, 
\label{eq:conjugate}\\
	\mcL_{(\mcM,M)}\otimes \iota^\ast\mcL_{(\mcM,M)} &\cong \phi^\ast\Big( (\det\mcM)\otimes\mcO_Y(L) \Big), 
\label{eq:inv_sheaf}\\
	\mcL_{(\mcM,M)}^{-1} &\cong \phi^\ast\Big( (\det\mcM)^{-1}\otimes\mcO_Y(-L) \Big)\otimes\mcL_{(\mcM,-M)}. 
\label{eq:inverse}
\end{align}
Moreover, $\mcL_{(\mcM,M)}^{-1}$ is associated to 
\begin{align*}
	\left(
	\left\{
	\frac{\xi_{ij}}{\det(G_{ij})}G_{ij}
	\right\}
	, 
	\left\{
	-M_i
	\right\}
	\right)_\gtU. 
\end{align*}
\end{prop}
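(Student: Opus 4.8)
The plan is to establish the three isomorphisms in the natural order (\ref{eq:conjugate}), (\ref{eq:inv_sheaf}), (\ref{eq:inverse}), since the last two will follow easily once the first is in hand, and then to read off the associated representation of $\mcL_{(\mcM,M)}^{-1}$ from (\ref{eq:inverse}) using the representation of $\mcO_X(R_\phi)$ computed in Example~\ref{ex:rami}. First I would prove (\ref{eq:conjugate}). The key point is that the covering transformation $\iota$ acts on $\CC(X)$ as an $\mcO_Y$-algebra automorphism sending $t\mapsto -t$ (since $t$ defines the ramification divisor and $t^2=F\in\CC(Y)$). Working with the embedding $\eta:\mcM\hookrightarrow\CC(X)$ from the proof of Proposition~\ref{prop:corr}, with local generators $\bm{d}_{i1},\bm{d}_{i2}$ satisfying $t_i(\bm{d}_{i1}\ \bm{d}_{i2})=(\bm{d}_{i1}\ \bm{d}_{i2})M_i$, I would apply $\iota^\ast$. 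Since $\mcL_{(\mcM,M)}\subset\CC(X)$ is locally generated by $\bm{d}_{i1}$, the sheaf $\iota^\ast\mcL_{(\mcM,M)}$ is locally generated by $\iota^\ast\bm{d}_{i1}$, and the relation $t_i\mapsto-t_i$ shows that the new local basis $(\iota^\ast\bm{d}_{i1}\ \iota^\ast\bm{d}_{i2})$ transforms under $-M_i$ while the transition matrices $G_{ij}$ are unchanged (they lie over $\CC(Y)$ and are $\iota$-invariant). Hence $\iota^\ast\mcL_{(\mcM,M)}$ is associated to $(\{G_{ij}\},\{-M_i\})_\gtU$, which by Proposition~\ref{prop:corr} means $\iota^\ast\mcL_{(\mcM,M)}\cong\mcL_{(\mcM,-M)}$.

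Next I would prove (\ref{eq:inv_sheaf}). The natural map $\mcL\otimes\iota^\ast\mcL\to\CC(X)$ sends the local generator $\bm{d}_{i1}\otimes\iota^\ast\bm{d}_{i1}$ to the product $\bm{d}_{i1}\cdot\iota^\ast\bm{d}_{i1}=N_{\CC(X)/\CC(Y)}(\bm{d}_{i1})$, which is a rational function on $Y$, i.e.\ lies in $\CC(Y)\subset\CC(X)$. This shows that $\mcL\otimes\iota^\ast\mcL$ is the pullback of a line bundle on $Y$; to identify it, I would compute its transition functions. The local generator $\bm{d}_{i1}\iota^\ast\bm{d}_{i1}$ transforms by $(\iota^\ast\bm{d}_{i1})\cdot\bm{d}_{i1}$ relative to the $U_j$-generator via the $(1,1)$-entry of the transformation, and using (\ref{eq:transform}) together with $\bm{d}_{i2}=\frac{t_i-a_{i0}}{a_{i1}}\bm{d}_{i1}$ from (\ref{eq:basis}) one computes that the norm $\bm{d}_{i1}\iota^\ast\bm{d}_{i1}$ scales by $\det(G_{ij})\,\xi_{ij}^{-1}$ between charts. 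Comparing with $G_{ij}^{(0)}=\mathrm{diag}(1,\xi_{ij})$ from Example~\ref{ex:rami} (whose determinant gives $\mcO_Y(-L)$), the transition functions are those of $(\det\mcM)\otimes\mcO_Y(L)$, as claimed. Tensoring (\ref{eq:inv_sheaf}) by $\mcL^{-1}\otimes(\iota^\ast\mcL)^{-1}$ and substituting (\ref{eq:conjugate}) for $\iota^\ast\mcL$ yields (\ref{eq:inverse}) immediately.

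Finally, for the associated representation of $\mcL_{(\mcM,M)}^{-1}$, I would combine (\ref{eq:inverse}) with the group law from Theorem~\ref{thm:group_law}/Proposition~\ref{prop:tensor}: by (\ref{eq:conjugate}) the factor $\mcL_{(\mcM,-M)}$ has representation $(\{G_{ij}\},\{-M_i\})_\gtU$, and tensoring with the pullback of a line bundle on $Y$ with transition functions $\det(G_{ij})^{-1}\xi_{ij}$ (the inverse of what appeared in (\ref{eq:inv_sheaf})) multiplies the $G_{ij}$ by the scalar $\xi_{ij}/\det(G_{ij})$ while leaving $M_i$ replaced by $-M_i$. This gives exactly the representation $\big(\{\xi_{ij}\det(G_{ij})^{-1}G_{ij}\},\{-M_i\}\big)_\gtU$. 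I expect the main obstacle to be the bookkeeping in (\ref{eq:inv_sheaf}): carefully verifying that the product of local generators really lands in $\CC(Y)$ and that its transition factor is precisely $\det(G_{ij})\xi_{ij}^{-1}$ rather than its inverse or a twist thereof, since a sign or inversion error there propagates through both (\ref{eq:inverse}) and the final representation. The conjugation step (\ref{eq:conjugate}) and the final assembly are comparatively routine once the action $t\mapsto -t$ and the group law are invoked.
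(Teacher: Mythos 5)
Your proposal is correct, and for (\ref{eq:conjugate}), (\ref{eq:inverse}) and the final representation it matches the paper's proof essentially verbatim: the paper also uses the conjugation $\overline{a+bt_i}=a-bt_i$ on the local generators $\bm{d}_{i1},\bm{d}_{i2}\subset\CC(X)$, notes that the $G_{ij}$ are $\iota$-invariant while the $M_i$ flip sign, derives (\ref{eq:inverse}) formally from the first two isomorphisms, and reads off the representation of $\mcL_{(\mcM,M)}^{-1}$ by tensoring $(\{G_{ij}\},\{-M_i\})_\gtU$ with the pullback whose transition functions are $\xi_{ij}/\det(G_{ij})$. The one place you genuinely diverge is (\ref{eq:inv_sheaf}): the paper routes through its group-law machinery, applying Lemma~\ref{lem:n-tensor} with the explicit matrices $K_{ij}^{(1)+},K_{ij}^{(2)+}$ for the pairs $(\mcM,M)$ and $(\mcM,-M)$ and multiplying them out, whereas you multiply the scalar transition functions of the rank-one subsheaves directly and use the norm observation $\bm{d}_{i1}\overline{\bm{d}}_{i1}\in\CC(Y)$ to see a priori that the product is pulled back from $Y$ --- a more direct and conceptually cleaner rank-one computation. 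Both routes, however, bottleneck at the same point, which you correctly anticipated as the main obstacle: the literal cocycle of the generator $\bm{d}_{i1}\overline{\bm{d}}_{i1}$ is not $\det(G_{ij})\xi_{ij}^{-1}$ but
\begin{align*}
\frac{a_{i1}g_{ij,11}^2-2a_{i0}g_{ij,11}g_{ij,21}-a_{i2}g_{ij,21}^2}{a_{i1}}
=\frac{a_{j1}}{a_{i1}}\cdot\frac{\det(G_{ij})}{\xi_{ij}},
\end{align*}
where the second equality is exactly the $(2,1)$-entry of the compatibility relation $M_j=\xi_{ij}G_{ij}^{-1}M_iG_{ij}$; so to finish you must absorb the coboundary $a_{j1}/a_{i1}$ by rescaling each generator by the unit $a_{i1}$ (this is where goodness of the representation is used), which is what the paper accomplishes by invoking its equivalence criterion Lemma~\ref{lem:equivalence}. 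Filling in that one identity and rescaling, your argument goes through completely.
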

\begin{proof}
Assume $G_{ij}$ and $M_i$ are defined as (\ref{eq:Gij}) and (\ref{eq:Mi}), respectively. 
For $a+bt_i\in\CC(X)$ with $a,b\in\CC(Y)$, put $\overline{a+bt_i}:=a-bt_i$. 
We regard $\mcL_{(\mcM,M)}$ as an $\mcO_X$-submodule in $\CC(X)$ as in the proof of Proposition~\ref{prop:tensor}. 
The line bundle $\iota^\ast\mcL_{(\mcM,M)}$ is generated by $\overline{\bm{d}}_{i1}$ on $V_{i}:=\phi^{-1}(U_i)\subset X$ by (\ref{eq:basis}). 
Since $G_{ij}\in\GL(2,\mcO_{U_i\cap U_j})$, we obtain
\begin{align*}
	\begin{pmatrix}
		\overline{\bm{d}}_{j1} & \overline{\bm{d}}_{j2}
	\end{pmatrix}
	=\overline{
	\begin{pmatrix}
		\bm{d}_{i1} & \bm{d}_{i2}
	\end{pmatrix}
	G_{ij}}
	=
	\begin{pmatrix}
		\overline{\bm{d}}_{i1} & \overline{\bm{d}}_{i2}
	\end{pmatrix}
	G_{ij}.
\end{align*}
Thus $\phi_\ast(\iota^\ast\mcL_{(\mcM,M)})\cong \mcM$. 
Similarly, we have 
\begin{align*}
	t_i
	\begin{pmatrix}
		\overline{\bm{d}}_{i1} & \overline{\bm{d}}_{i2}
	\end{pmatrix}
	=
	\overline{-t_i
	\begin{pmatrix}
		\bm{d}_{i1} & \bm{d}_{i2}
	\end{pmatrix}
	}=
	\begin{pmatrix}
		\overline{\bm{d}}_{i1} & \overline{\bm{d}}_{i2}
	\end{pmatrix}
	(-M_i).
\end{align*}
Hence (\ref{eq:conjugate}) holds. 

For (\ref{eq:inv_sheaf}), we compute $\mcL_{(\mcM,M)}\otimes\mcL_{(\mcM,-M)}$ by using Lemma~\ref{lem:n-tensor} putting $(\mcM^{(1)},M^{(1)}):=(\mcM,M)$ and $(\mcM^{(2)},M^{(2)}):=(\mcM,-M)$. 
In this case, we have 
\begin{align*} 
	K_{ij}^{(1)+} &=\frac{1}{a_{i1}}\begin{pmatrix} a_{i1}g_{ij,11}-a_{i0}g_{ij,21} & g_{ij,21}F_i \\ g_{ij,21} & a_{i1}g_{ij,11}-a_{i0}g_{ij,21} \end{pmatrix}, 
	\\[0.5em]
	K_{ij}^{(2)+} &=\frac{1}{a_{i1}}\begin{pmatrix} a_{i1}g_{ij,11}-a_{i0}g_{ij,21} & -g_{ij,21}F_i \\ -g_{ij,21} & a_{i1}g_{ij,11}-a_{i0}g_{ij,21} \end{pmatrix}.  
\end{align*}
Since $F_i=a_{i0}^2+a_{i1}a_{i2}$, by direct computation, we obtain 
\begin{align*}
	G_{ij}^{[1,1]}&=
	\frac{a_{i1} g_{ij,11}^2 -2 a_{i0} g_{ij,11}g_{ij,21}  -a_{i2}g_{ij,21}^2}{a_{i1}}
	\begin{pmatrix}
		1 & 0 \\ 0 & \xi_{ij}
	\end{pmatrix}
\end{align*}
From the $(2,1)$ entry of the equation $M_j=\xi_{ij}G_{ij}^{-1}M_{i}G_{ij}$, we have 
\begin{align*}
	a_{j1}\det(G_{ij})&=\Big(a_{i1}g_{ij,11}^2 -2a_{i0}g_{ij,11}g_{ij,21} -a_{i2}g_{ij,21}^2\Big)\xi_{ij}. 
\end{align*}
Hence we obtain 
\begin{align}
	G_{ij}^{[1,1]}&=\frac{a_{j1}}{a_{i1}}\frac{\det(G_{ij})}{\xi_{ij}}
	\begin{pmatrix}
		1 & 0 \\ 0 & \xi_{ij}
	\end{pmatrix}.
	\label{eq:tran_push}
\end{align}
Moreover, since 
\begin{align*}
	\frac{1}{a_{i1}}
	\Bigg(
	\frac{\det(G_{ij})}{\xi_{ij}}
	\begin{pmatrix}
		1 & 0 \\ 0 & \xi_{ij}
	\end{pmatrix}
	\Bigg)
	&=
	\frac{1}{a_{j1}}
	G_{ij}^{[1,1]}
	, 
\end{align*}
(\ref{eq:inv_sheaf}) holds by Lemma~\ref{lem:equivalence}. 
Isomorphism (\ref{eq:inverse}) follows from (\ref{eq:conjugate}) and (\ref{eq:inv_sheaf}). 
Since $\phi^\ast((\det\mcM)^{-1}\otimes\mcO_Y(-L))$ is associated to 
\begin{align*}
	\left( \left\{ \frac{\xi_{ij}}{\det(G_{ij})}
	\begin{pmatrix}
		1 & 0 \\ 0 & \xi_{ij}
	\end{pmatrix}
	\right\}, \left\{ 
	\begin{pmatrix}
		0 & F_i \\ 1 & 0
	\end{pmatrix}
	\right\} \right)_\gtU, 
\end{align*}
the last assertion follows from (\ref{eq:inverse}) and Proposition~\ref{prop:tensor}. 
\end{proof}


\begin{proof}[Proof of Theorem~\ref{thm:group_law}]
The assertion follows from Lemma~\ref{lem:n-tensor} and Proposition~\ref{prop:inverse}. 
\end{proof}

In the rest of this section, we see the correspondence between global sections of $\mcL_{(\mcM,M)}$ and ones of $(\det\mcM)\otimes\mcO_Y(L)$. 
Let $(\mcM,M)$ be an admissible pair for $\phi$ represented by a good representation $\big(\{G_{ij}\},\{M_i\}\big)_{\gtU}$ ($\gtU:=\{U_i\}_{i\in I}$), where $G_{ij}$ and $M_i$ are written as (\ref{eq:Gij}) and (\ref{eq:Mi}), respectively. 
Put $\mcL:=\mcL_{(\mcM,M)}$. 
Let $\varphi_i:\mcM|_{U_i}\overset{\sim}{\to}\mcO_{U_i}^{\oplus 2}$ and $\tilde{\varphi}_i:\mcL|_{V_i}\overset{\sim}{\to}\mcO_{V_i}$ be isomorphisms satisfying (\ref{eq:Mi}) and (\ref{eq:mcLi}), respectively. 

\begin{prop}\label{prop:splitting}
With the above notation, let $\tilde{f}$ be a global section of $\mcL$, and put $\tilde{f}_i:=\tilde{f}|_{V_i}$ for $i\in I$. 
If $\tilde{\varphi}_i(\tilde{f}_i)=x_i+y_it_i$ with $x_i,y_i\in\Gamma(U_i,\mcO_Y)$, 
then $h_i:=a_{i1}(x_i^2-y_i^2 F_i)\in\Gamma(U_i,\mcO_Y)$ $(i\in I)$ satisfy 
\begin{align}
	\frac{\det(G_{ij})}{\xi_{ij}}h_j&=h_i. 
	\label{eq:det_h}
\end{align}
In particular, $\{h_i\}_{i\in I}$ defines a global section $h$ of $(\det\mcM)\otimes\mcO_{Y}(L)$. 
\end{prop}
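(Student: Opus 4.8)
The plan is to recognize each $h_i$ as $a_{i1}$ times the norm of $\tilde{\varphi}_i(\tilde{f}_i)$ for the quadratic extension $\CC(X)/\CC(Y)$, and to track how that norm transforms under the transition functions of $\mcL$ recorded in Corollary~\ref{cor:corr}. Set $s_i:=\tilde{\varphi}_i(\tilde{f}_i)=x_i+y_it_i$ and introduce the conjugation $\overline{x+yt_i}:=x-yt_i$; this is the action of $\iota^\ast$ and is independent of the chart, since $t_j=\xi_{ij}t_i$ forces $\overline{t_j}=\xi_{ij}\overline{t_i}$. The associated norm $N(u):=u\bar{u}$ is therefore a well-defined multiplicative map, and on $U_i$ one has $N(s_i)=x_i^2-y_i^2F_i$ because $t_i^2=F_i$. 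Thus $h_i=a_{i1}N(s_i)$, and every $h_i$ lies in $\Gamma(U_i,\mcO_Y)$ since $x_i,y_i,a_{i1},F_i$ are regular there.

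First I would exploit that $\tilde{f}$ is global: on $V_i\cap V_j$ the restrictions agree, so by (\ref{eq:mcLi}) we have $s_i=\lambda_{ij}s_j$, where
\[ \lambda_{ij}:=\tilde{\varphi}_i\circ\tilde{\varphi}_j^{-1}(1)=g_{ij,11}+g_{ij,21}\,\frac{t_i-a_{i0}}{a_{i1}}. \]
Applying the multiplicative norm yields $N(s_i)=N(\lambda_{ij})N(s_j)$, whence $h_i=\tfrac{a_{i1}}{a_{j1}}N(\lambda_{ij})h_j$. It therefore suffices to prove $\tfrac{a_{i1}}{a_{j1}}N(\lambda_{ij})=\det(G_{ij})/\xi_{ij}$.

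The computational core is the evaluation of $N(\lambda_{ij})$. Writing $\lambda_{ij}=\tfrac{a_{i1}g_{ij,11}-a_{i0}g_{ij,21}}{a_{i1}}+\tfrac{g_{ij,21}}{a_{i1}}t_i$ and substituting $F_i=a_{i0}^2+a_{i1}a_{i2}$, a direct expansion produces a factor $a_{i1}$ that cancels one power from the denominator $a_{i1}^2$, leaving
\[ N(\lambda_{ij})=\frac{1}{a_{i1}}\left(a_{i1}g_{ij,11}^2-2a_{i0}g_{ij,11}g_{ij,21}-a_{i2}g_{ij,21}^2\right). \]
The bracketed quantity is exactly the one appearing in the $(2,1)$-entry identity $a_{j1}\det(G_{ij})=(a_{i1}g_{ij,11}^2-2a_{i0}g_{ij,11}g_{ij,21}-a_{i2}g_{ij,21}^2)\xi_{ij}$ established in the proof of Proposition~\ref{prop:inverse}. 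Substituting gives $N(\lambda_{ij})=\tfrac{a_{j1}\det(G_{ij})}{a_{i1}\xi_{ij}}$, so $\tfrac{a_{i1}}{a_{j1}}N(\lambda_{ij})=\det(G_{ij})/\xi_{ij}$, and (\ref{eq:det_h}) follows at once.

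For the final assertion, I would observe that $\det\mcM$ has transition functions $\det(G_{ij})$ while $\mcO_Y(L)$ has transition functions $\xi_{ij}^{-1}$, so $(\det\mcM)\otimes\mcO_Y(L)$ has transition functions $\det(G_{ij})/\xi_{ij}$; equation (\ref{eq:det_h}) is then precisely the cocycle compatibility $h_i=(\det(G_{ij})/\xi_{ij})h_j$ that glues $\{h_i\}_{i\in I}$ into a global section $h$. I do not expect a serious obstacle here: the only points requiring care are that the norm $N$ is chart-independent and multiplicative, and that the cancellation in the computation of $N(\lambda_{ij})$ matches the $(2,1)$-entry relation already derived for Proposition~\ref{prop:inverse}, which reduces the whole argument to bookkeeping.
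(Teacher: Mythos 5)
Your proof is correct, and while it rests on the same underlying idea as the paper's --- the section $h$ is the norm $\tilde{f}\cdot\iota^\ast\tilde{f}$ --- the execution is genuinely different. The paper works at the level of the rank-two data: it expresses $\upsilon^{-1}(\tilde{f})$ in the bases $\bm{d}_{i1},\bm{d}_{i2}$ of $\phi_\ast\mcL$ inside $\CC(X)$, identifies $\tilde{f}(\iota^\ast\tilde{f})$ as a global section of $\mcL\otimes\iota^\ast\mcL\cong\phi^\ast\big((\det\mcM)\otimes\mcO_Y(L)\big)$ via Proposition~\ref{prop:inverse}, and reads off the transformation rule from the transition formula (\ref{eq:tran_push}), itself obtained through the tensor-product computation of Proposition~\ref{prop:tensor}. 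You instead stay entirely at the scalar level: you use only the cocycle $\lambda_{ij}$ of $\mcL$ from Corollary~\ref{cor:corr}\,(i), the multiplicativity and chart-independence of the norm $N(u)=u\cdot\iota^\ast u$ (your consistency check via $t_j=t_i\xi_{ij}$ with $\xi_{ij}$ fixed by conjugation is the right one), and a short expansion of $N(\lambda_{ij})$ in which the substitution $F_i=a_{i0}^2+a_{i1}a_{i2}$ correctly extracts the factor $a_{i1}$. You then close with the $(2,1)$-entry identity $a_{j1}\det(G_{ij})=\big(a_{i1}g_{ij,11}^2-2a_{i0}g_{ij,11}g_{ij,21}-a_{i2}g_{ij,21}^2\big)\xi_{ij}$, which is indeed established in the proof of Proposition~\ref{prop:inverse}, so citing it rather than re-deriving (\ref{eq:tran_push}) is legitimate; the final gluing step is also right, since $\xi_{ij}$ are transition functions of $\mcO_Y(-L)$, so $\det(G_{ij})/\xi_{ij}$ are exactly those of $(\det\mcM)\otimes\mcO_Y(L)$ and (\ref{eq:det_h}) is the section condition. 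The trade-off: your route is shorter and avoids all $\bm{d}$-basis bookkeeping, whereas the paper's route makes explicit that $h$ is (the descent of) $\tilde{f}\cdot\iota^\ast\tilde{f}$ --- a fact used immediately in Corollary~\ref{cor:splitting}, where $h=0$ must cut out $\phi(D^+)$. Your argument contains this implicitly, since $h_i=a_{i1}N(s_i)$ with $s_i$ a local equation of the zero divisor of $\tilde{f}$, but if you wanted your proof to feed into that corollary you should record the identification explicitly.
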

\begin{proof}
Let $\bm{d}_{i1}, \bm{d}_{i2}$ be local basis of $\phi_\ast\mcL$ in $\CC(X)$ as in the proof of Proposition~\ref{prop:tensor}. 
By Corollary~\ref{cor:corr}~\ref{cor:corr_morphism}, we have 
\begin{align*} 
	\tilde{\varphi}_i\circ\upsilon|_{U_i}\circ\varphi_i^{-1}\begin{pmatrix} x_i+a_{i0}y_i \\ a_{i1}y_i \end{pmatrix} =\tilde{f}_i. 
\end{align*}
Hence the sections $(x_i+a_{i0}y_i)\bm{d}_{i1}+a_{i1}y_i\bm{d}_{i2}$ ($i\in I$) are glued each other, and defines the global section $\upsilon^{-1}(\tilde{f})\in\HH^0(Y,\mcM)$. 
By (\ref{eq:basis}) and direct computation, we obtain 
\begin{align*}
	&\Big((x_i+a_{i0}y_i)\bm{d}_{i1}+a_{i1}y_i\bm{d}_{i2}\Big)\Big((x_i+a_{i0}y_i)\overline{\bm{d}}_{i1}+a_{i1}y_i\overline{\bm{d}}_{i2}\Big)
	=
	h_i\frac{\bm{d}_{i1}\overline{\bm{d}}_{i1}}{a_{i1}}, 
\end{align*}
and they define a global section $\tilde{f}(\iota^\ast \tilde{f})$ of $\phi^\ast\big(\det\mcM\otimes\mcO_Y(L)\big)$ by Proposition~\ref{prop:inverse}. 
By the proof of Proposition~\ref{prop:tensor} and (\ref{eq:tran_push}), we obtain 
\begin{align*}
	\frac{\det(G_{ij})}{\xi_{ij}}
	\begin{pmatrix}
		\dfrac{\bm{d}_{i1}\overline{\bm{d}}_{i1}}{a_{i1}} & \dfrac{\bm{d}_{i1}\overline{\bm{d}}_{i2}}{a_{i1}}
	\end{pmatrix}
	\begin{pmatrix}
		1 & 0 \\ 0 & \xi_{ij}
	\end{pmatrix}
	=
	\begin{pmatrix}
		\dfrac{\bm{d}_{j1}\overline{\bm{d}}_{j1}}{a_{j1}} & \dfrac{\bm{d}_{j1}\overline{\bm{d}}_{j2}}{a_{j1}}
	\end{pmatrix}
\end{align*}
This implies that $\{h_i\}$ satisfies (\ref{eq:det_h}), and defines $h\in\HH^0\big(Y,(\det\mcM)\otimes\mcO_Y(L)\big)$. 
\end{proof}

The following corollary is an interpretation of Proposition~\ref{prop:splitting} in terms of divisors. 

\begin{cor}\label{cor:splitting}
	With the same notation of Proposition~\ref{prop:splitting}, 
	let $D$ be an effective divisor on $Y$ with $\mcO_Y(D)\cong(\det\mcM)\otimes\mcO_Y(L)$. 
	If there exists an effective divisor $D^+$ on $X$ with $\mcO_{X}(D^+)\cong\mcL$ such that $\phi^\ast(D)=D^++\iota^\ast(D^+)$, then $D$ is locally defined by $a_{i1}(x_i^2-y_i^2F_i)=0$ for some $x_i,y_i\in\Gamma(U_i,\mcO_Y)$ on each $U_i$. 
\end{cor}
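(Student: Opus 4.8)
The plan is to deduce the statement directly from Proposition~\ref{prop:splitting} by feeding it the section attached to $D^+$, and then to recover $D$ as the divisor of zeros of the resulting section on $Y$. First, since $D^+$ is effective with $\mcO_X(D^+)\cong\mcL$, I would transport the canonical section of $\mcO_X(D^+)$ to a nonzero $\tilde f\in\HH^0(X,\mcL)$ whose divisor of zeros is exactly $D^+$. Writing $\tilde\varphi_i(\tilde f_i)=x_i+y_it_i$ as in Proposition~\ref{prop:splitting}, that proposition produces a global section $h$ of $(\det\mcM)\otimes\mcO_Y(L)$ with $h_i=a_{i1}(x_i^2-y_i^2F_i)$ on each $U_i$. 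So the corollary reduces to identifying the divisor of zeros of $h$ with $D$.

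Next I would reuse the computation already carried out inside the proof of Proposition~\ref{prop:splitting}: on $U_i$ the product $\tilde f_i\cdot(\iota^\ast\tilde f)_i$ equals $h_i\,\bm{d}_{i1}\overline{\bm{d}}_{i1}/a_{i1}$, and $\bm{d}_{i1}\overline{\bm{d}}_{i1}/a_{i1}$ is precisely the local frame of $\phi^\ast\big((\det\mcM)\otimes\mcO_Y(L)\big)$ coming from the isomorphism (\ref{eq:inv_sheaf}). This identifies $\phi^\ast h$ with the product section $\tilde f\cdot(\iota^\ast\tilde f)$ of $\mcL\otimes\iota^\ast\mcL$. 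Passing to divisors of zeros then gives
\[ \phi^\ast(\mathrm{div}\,h)=\mathrm{div}(\phi^\ast h)=\mathrm{div}(\tilde f)+\mathrm{div}(\iota^\ast\tilde f)=D^++\iota^\ast D^+=\phi^\ast D, \]
where the last equality is exactly the hypothesis.

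Finally, because $\phi$ is finite and surjective, pullback of divisors along $\phi$ is injective, so the displayed identity forces $\mathrm{div}(h)=D$. Since $h$ is locally $a_{i1}(x_i^2-y_i^2F_i)$, this says that $D$ is locally defined by $a_{i1}(x_i^2-y_i^2F_i)=0$, as required. The one delicate point — and the step I would check most carefully — is the middle one: a priori $h$ and $D$ only cut out the same line bundle $(\det\mcM)\otimes\mcO_Y(L)\cong\mcO_Y(D)$, so $\mathrm{div}(h)$ is only guaranteed to be linearly equivalent to $D$. What pins them together exactly is that the frame $\bm{d}_{i1}\overline{\bm{d}}_{i1}/a_{i1}$ used to express $h$ is the very frame realizing (\ref{eq:inv_sheaf}), so that $\phi^\ast h$ agrees with $\tilde f\cdot\iota^\ast\tilde f$ on the nose rather than up to a unit; combined with the hypothesis $\phi^\ast D=D^++\iota^\ast D^+$ and the injectivity of $\phi^\ast$, this upgrades the linear equivalence $\mathrm{div}(h)\sim D$ to the equality $\mathrm{div}(h)=D$.
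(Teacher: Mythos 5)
Your proof is correct and takes essentially the same route as the paper: feed the section $\tilde f$ defining $D^+$ into Proposition~\ref{prop:splitting} and identify the zero locus of the resulting section $h$ with $D$. The paper's own proof simply asserts that $h=0$ defines $\phi(D^+)=D$, whereas you rigorously justify that step via the identity $\phi^\ast h=\tilde f\cdot\iota^\ast\tilde f$ (extracted from the frame computation inside the proof of Proposition~\ref{prop:splitting} and the isomorphism (\ref{eq:inv_sheaf})) together with the injectivity of $\phi^\ast$ on divisors (e.g.\ from $\phi_\ast\phi^\ast D=2D$), which correctly upgrades the a priori linear equivalence $\mathrm{div}(h)\sim D$ to the equality the corollary needs.
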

\begin{proof}
Let $\tilde{f}\in\HH^0(X,\mcL)$ be a section defining $D^+$, and put $\tilde{f}_i:=\tilde{f}|_{V_i}$. 
Let $x_i,y_i\in\Gamma(U_i,\mcO_Y)$ be sections such that $\tilde{\varphi}_i(\tilde{f}_i)=x_i+y_it_i$. 
By Proposition~\ref{prop:splitting}, $h_i:=a_{i1}(x_i^2-y_i^2F_i)$ ($i\in I$) define a section $h\in\HH^0\big(Y,(\det\mcM)\otimes\mcO_Y(L)\big)$. 
Moreover $h=0$ defines the image $\phi(D^+)=D$. 
Therefore $D$ is locally defined by $a_{i1}(x_i^2-y_i^2F_i)=0$. 
\end{proof}

\begin{rem}
Corollary~\ref{cor:splitting} gives a condition for splitting of $\phi^\ast C$. 
However, it is difficult to represent a local equation of a divisor as $x_i^2-y_i^2 F_i=0$. 
For example, there exists a plane curve $C\subset\PP^2$ of degree $6$  defined locally by $s_4^2-s_3^2F_U=0$ on an affine open $U\subset\PP^2$ for $s_i,F_U\in\CC[x,y]$ with $\deg s_i=i$ and $\deg F=2$, i.e., the higher terms of $s_4^2$ and $s_3^2F$ are canceled in $s_4^2-s_3^2F_U$ (see Example~\ref{ex:global_section} below). 
\end{rem}

\section{A subgroup of $\Pic(X)$}\label{sec:subgroup}

We have seen the correspondence between admissible pairs for a non-singular double cover $\phi:X\to Y$ and line bundles on $X$. 
Hence it is effective for understanding $\pic(X)$ to study $\ad_\phi(Y)$. 
However, it seems difficult to find a morphism $M:\mcM(-L)\to\mcM$ satisfying $M^2=F\cdot\id_\mcM$ for a general $2$-bundle $\mcM$ on $Y$. 
In the case where $\mcM\cong\mcO_Y(D_1)\oplus\mcO_{Y}(D_2)$ for some divisors $D_1,D_2$ on $Y$, such a morphism $M$ can be represented as 
\begin{align}
	M&=
	\begin{pmatrix}
		a_0 & a_2 \\ a_1 & -a_0
	\end{pmatrix}
	: \mcO_Y(D_1-L)\oplus\mcO_Y(D_2-L)\to\mcO_Y(D_1)\oplus\mcO_{Y}(D_2),
	\label{eq:mor_split}
\end{align}
where $a_0$, $a_1$ and $a_2$ are global sections of $\mcO_Y(L)$, $\mcO_Y(L-D_1+D_2)$ and $\mcO_Y(L+D_1-D_2)$, respectively, satisfying $a_0^2+a_1a_2=F$. 

\begin{defin}
Let $\phi:X\to Y$ be a non-singular double cover. 
\begin{enumerate}
	\item We say that a line bundle $\mcL$ on $X$ \textit{splits} with respect to $\phi$ if $\phi_\ast\mcL$ is the direct sum of two line bundles on $Y$. 
	\item Let $\sbpic_\phi(X)$ denote the subgroup of $\pic(X)$ generated by line bundles which split with respect to $\phi$ (``s" of $\sbpic$ means ``sub" or ``split");
	\begin{align*}
		\sbpic_\phi(X):=\Big\langle [\mcL] \in \pic(X) \ \Big| \  \mbox{$\mcL$ splits with respect to $\phi$} \Big\rangle.
	\end{align*}
\end{enumerate}
\end{defin}

\begin{rem}\label{rem:generator}
If $\phi_\ast\mcL\cong\mcO_Y(D_1)\oplus\mcO_Y(D_2)$, 
then 
\begin{align*}
	\phi_\ast\big(\mcL\otimes\phi^\ast\mcO_Y(-D_2)\big)\cong\mcO_Y(D_1-D_2)\oplus\mcO_Y
\end{align*}
by projection formula. 
Since $\phi^\ast\mcO_Y(D_2)\in\sbpic_\phi(X)$, the subgroup $\sbpic_\phi(X)$ is generated by $\phi^\ast(\Pic(Y))$ and line bundles $\mcL$ satisfying $\phi_\ast\mcL\cong\mcO_Y(D')\oplus\mcO_Y$ on $X$ 
for some divisor $D'$ on $Y$. 

\end{rem}

\begin{lem}\label{lem:generator}
If $Y$ is an open subset of a smooth projective variety $\overline{Y}$ with $\codim_{\overline{Y}}(\overline{Y}\setminus Y)\geq2$, 
then $\HH^0(Y,\mcO_Y)=\CC$, and 
$\sbpic_{\phi}(X)$ is generated by $\phi^\ast(\Pic(Y))$ and line bundles $\mcL$ with $\phi_\ast\mcL\cong\mcO_Y(D')\oplus\mcO_Y$ such that either $\mcO_Y(D')\cong\mcO_Y$ or $\HH^0(Y,\mcO_Y(D'))=0$. 
\end{lem}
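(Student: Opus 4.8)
The plan is to split the statement into its two assertions: the cohomological fact $\HH^0(Y,\mcO_Y)=\CC$, and the refinement of the generating set already provided by Remark~\ref{rem:generator}. I would establish the former first, since it is the input that makes the second part work.

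For $\HH^0(Y,\mcO_Y)=\CC$, the idea is the algebraic Hartogs extension. Since $\overline{Y}$ is smooth, hence normal, and $Y=\overline{Y}\setminus Z$ with $\codim_{\overline{Y}}Z\geq 2$, the inclusion $j:Y\hookrightarrow\overline{Y}$ satisfies $\mcO_{\overline{Y}}\cong j_\ast\mcO_Y$; equivalently, every regular function on $Y$ extends uniquely across the codimension $\geq 2$ locus $Z$. Thus restriction gives an isomorphism $\HH^0(\overline{Y},\mcO_{\overline{Y}})\overset{\sim}{\to}\HH^0(Y,\mcO_Y)$, and because $\overline{Y}$ is irreducible and projective over $\CC$ the left-hand side is $\CC$. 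I would also record here that $Y$ is integral, being an open subset of the integral variety $\overline{Y}$; this integrality is used below.

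For the generation statement, I would start from Remark~\ref{rem:generator}, which already reduces the generators to $\phi^\ast(\Pic(Y))$ together with line bundles $\mcL$ satisfying $\phi_\ast\mcL\cong\mcO_Y(D')\oplus\mcO_Y$. The task is to replace each such $\mcL$, modulo $\phi^\ast(\Pic(Y))$, by one whose $D'$ meets the extra condition. The key elementary observation is that on an integral $Y$ with $\HH^0(Y,\mcO_Y)=\CC$, a line bundle $\mcN\not\cong\mcO_Y$ cannot have both $\HH^0(Y,\mcN)\ne0$ and $\HH^0(Y,\mcN^{-1})\ne0$: nonzero sections $s$ of $\mcN$ and $t$ of $\mcN^{-1}$ would multiply (integrality ensures nonvanishing of the product) to a nonzero element of $\HH^0(Y,\mcO_Y)=\CC$, i.e.\ a unit, forcing $s$ to vanish nowhere and hence $\mcN\cong\mcO_Y$. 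The other ingredient is a swap via the projection formula: taking $\mcN=\mcO_Y(-D')$ gives
\[
\phi_\ast\big(\mcL\otimes\phi^\ast\mcO_Y(-D')\big)\cong\big(\mcO_Y(D')\oplus\mcO_Y\big)\otimes\mcO_Y(-D')\cong\mcO_Y(-D')\oplus\mcO_Y,
\]
so modulo $\phi^\ast(\Pic(Y))$ the invariant $D'$ may be exchanged for $-D'$. Combining these: if $\mcO_Y(D')\cong\mcO_Y$ the generator already qualifies; otherwise at least one of $\HH^0(Y,\mcO_Y(D'))$ and $\HH^0(Y,\mcO_Y(-D'))$ vanishes, and I select the corresponding representative (either $\mcL$ or $\mcL\otimes\phi^\ast\mcO_Y(-D')$), which differs from the original by $\phi^\ast\mcO_Y(D')\in\phi^\ast(\Pic(Y))$. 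The resulting collection generates the same subgroup and consists of the admissible generators.

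The conceptual heart is the incompatibility of global sections of $\mcN$ and $\mcN^{-1}$ for $\mcN\not\cong\mcO_Y$, which is exactly where $\HH^0(Y,\mcO_Y)=\CC$ and the integrality of $Y$ are indispensable; I expect this to be the main (though still short) point. Everything else is bookkeeping with the projection formula and the reduction already in Remark~\ref{rem:generator}.
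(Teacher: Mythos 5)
Your proposal is correct and follows essentially the same route as the paper: Hartogs-type extension across the codimension $\geq2$ complement to get $\HH^0(Y,\mcO_Y)=\HH^0(\overline{Y},\mcO_{\overline{Y}})=\CC$ (the paper cites \cite[Proposition~1.6]{hartshorne1980} for the more general statement $\HH^0(Y,\mcO_Y(D))=\HH^0(\overline{Y},\mcO_{\overline{Y}}(\overline{D}))$), followed by the dichotomy that a nontrivial $\mcO_Y(D')$ cannot have nonzero sections of both itself and its inverse, and the projection-formula twist $\phi_\ast(\mcL\otimes\phi^\ast\mcO_Y(-D'))\cong\mcO_Y(-D')\oplus\mcO_Y$ feeding into Remark~\ref{rem:generator}. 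The only (harmless) difference is that you prove the section dichotomy directly on $Y$ via the multiplication argument and integrality, where the paper merely asserts it after establishing the extension of sections to the projective $\overline{Y}$.
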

\begin{proof}
For an irreducible divisor $C$ on $Y$, let $\overline{C}$ denote the closure of $C$ on $\overline{Y}$. 
Let $D$ be a divisor on $Y$, and put $\overline{D}:=\sum_{i=1}^k\overline{C}_i$, where $D=\sum_{i=1}^kC_i$ is the irreducible decomposition of $D$. 
Then $\mcO_Y(D)=\mcO_{\overline{Y}}(\overline{D})|_{Y}$. 
By \cite[Proposition~1.6]{hartshorne1980}, we have $\HH^0\big(Y,\mcO_Y(D)\big)=\HH^0\big(\overline{Y},\mcO_{\overline{Y}}(\overline{D})\big)$. 
Thus 
\begin{align*}
	\HH^0(Y,\mcO_Y)=\HH^0\left(\overline{Y},\mcO_{\overline{Y}}\right)=\CC. 
\end{align*}
For a line bundle $\mcL$ on $X$ with $\phi_\ast\mcL\cong\mcO_Y(D)\oplus\mcO_Y$, if $\mcO_Y(D)\not\cong\mcO_Y$ and $\HH^0(Y,\mcO_Y(D))\ne0$, then $\HH^0(Y,\mcO_Y(-D))=0$ and 
\begin{align*}
	\phi_\ast\Big(\mcL\otimes\phi^\ast\big(\mcO_Y(-D)\big)\Big)\cong\mcO_Y(-D)\oplus\mcO_Y.
\end{align*} 
Therefore the assertion holds true by Remark~\ref{rem:generator}. 
\end{proof}

\begin{proof}[Proof of Theorem~\ref{thm:split}]
Let $D'$ be a divisor on $Y$ such that either $D'\sim0$ or $\HH^0(Y,\mcO_Y(D'))=0$. 
Put $\mcL:=\mcO_X(D^+)$ and $\mcM:=\mcO_Y(D')\oplus\mcO_Y$. 
Let $\tilde{f}\in\HH^0(X,\mcL)$ be a section defining $D^+$. 
Suppose that there is an isomorphism $\upsilon:\mcM\overset{\sim}{\to}\phi_\ast\mcL$. 
By $\HH^0(Y,\mcO_Y)=\CC$ and the assumption for $D'$, we may assume that 
\begin{align*}
	\upsilon^{-1}(\tilde{f})&=\bm{b}_2:=
	\begin{pmatrix}
		0 \\ 1
	\end{pmatrix}
	\in \HH^0\big( Y,\mcM \big)=\HH^0\big(Y,\mcO_Y(D')\big)\oplus\HH^0\big(Y,\mcO_Y\big)
\end{align*}
after taking a certain basis $\bm{b}_1,\bm{b}_2$ of $\mcM$, $\mcM=\mcO_Y(D')\bm{b}_1\oplus\mcO_Y\bm{b}_2$. 
Let $M:\mcM(-L)\to\mcM$ be a morphism satisfying $M^2=F{\cdot}\id_\mcM$ such that $\mcL$ is associated to $(\mcM,M)$. 
The morphism $M$ can be regarded as a matrix with respect to the basis $\bm{b}_1,\bm{b}_{2}$ as in (\ref{eq:mor_split}) with $D_1=D'$ and $D_2=0$. 
Let $\gtU:=\{U_i\}_{i\in I}$ be an affine open covering of $Y$, and put $a_{ik}:=a_k|_{U_i}$ ($i\in I$, $k=0,1,2$) and 
\begin{align*}
	M_i&:=M|_{U_i}=
	\begin{pmatrix}
		a_{i0} & a_{i2} \\ a_{i1} & -a_{i0}
	\end{pmatrix}.
\end{align*}
Let $\eta_{ij}$ be sections of $\Gamma(U_i\cap U_j,\mcO_Y)$ which form transition functions of $\mcO_Y(D')$: 
\[
\begin{tikzpicture}
	\node (Lj) at (0,1.2) {$\mcO_Y(D')|_{U_j}$};
	\node (Li) at (2.5,1.2) {$\mcO_Y(D')|_{U_i}$};
	\node (Oj) at (0,0) {$\mcO_{U_j}$};
	\node (Oi) at (2.5,0) {$\mcO_{U_i}$};
	
	\draw[double distance=2.5pt] (Lj)--(Li);
	\draw[->] (Oj)-- node[above, scale=0.7] {$\times \eta_{ij}$} (Oi);
	\draw[->] (Lj)-- node[right] {\rotatebox{90}{$\sim$}} (Oj);
	\draw[->] (Li)-- node[right] {\rotatebox{90}{$\sim$}} (Oi);
\end{tikzpicture}
\]
Transition functions of $\mcM$ are represented by the matrices 
\begin{align*}
	G_{ij}&=
	\begin{pmatrix}
		\eta_{ij} & 0 \\ 0& 1
	\end{pmatrix}
	: \mcO_{U_j}\oplus\mcO_{U_j} \to \mcM \to \mcO_{U_i}\oplus\mcO_{U_i}. 
\end{align*}
Since $(\{G_{ij}\},\{M_i\})_{\gtU}$ is not good in general, we take a good representation $(\{G^\natu_{\alpha\beta}\},\{M^\natu_{\alpha}\})_{\gtU^\natu}$ constructed from $(\{G_{ij}\},\{M_i\})_{\gtU}$ as in the proof of Lemma~\ref{lem:good-rep}. 
Let $\varphi_\alpha:\mcM|_{U_\alpha^\natu} \to\mcO_{U_\alpha^\natu}^{\oplus2}$ ($\alpha\in I^\natu$) be isomorphisms such that $\varphi_\alpha\circ\varphi_\beta^{-1}=G_{\alpha\beta}^\natu$. 
Recall that $I^\natu:=\{(i,k)\mid i\in I,\ k=1,\dots,n_i\}$. 
Then 
\begin{align*}
	\varphi_{(i,2)}\circ\upsilon^{-1}(\tilde{f})&=
	\begin{pmatrix}
		1 \\ 0
	\end{pmatrix}, 
	&
	\varphi_{(i,k)}\circ\upsilon^{-1}(\tilde{f})&=
	\begin{pmatrix}
		0 \\ 1
	\end{pmatrix} 
	\quad (k\ne 2)
\end{align*}
for $i\in I$. 
By Corollary~\ref{cor:corr}, there exist isomorphisms $\tilde{\varphi}_{\alpha}:\mcL|_{V_\alpha}\to\mcO_{V_i}$ ($\alpha\in I^\natu$) such that
\begin{align*}
	\tilde{\varphi}_{(i,2)}(\tilde{f})&=1, 
	&
	\tilde{\varphi}_{(i,k)}(\tilde{f})&=\frac{t_i-a^\natu_{(i,k)0}}{a^\natu_{(i,k)1}} \quad (k\ne 2), 
\end{align*}
By Corollary~\ref{cor:splitting}, $D$ is locally defined by $a^\natu_{(i,2)1}=a_{i2}$ on $U^\natu_{(i,2)}$ and $-a^\natu_{(i,k)2}=-a_{i2}$ on $U^\natu_{(i,k)}$ for $k\ne 2$. 
Hence $a_2$ is a global section of $(\det\mcM)\otimes\mcO_Y(L)$ (note that the difference of sign of $a_{i2}$ above is derived from $\det(A_{(i,2)})=-1$ and $\det(A_{(i,k)})=1$ for $k\ne 2$ and $A_{(i,k)}$ in (\ref{eq:Palpha})). 
Since $\det\mcM\cong\mcO_Y(D')$ and $a_2\in\HH^0(Y,\mcO_Y(D))$, we obtain (i) by Proposition~\ref{prop:inverse}. 
Since $a_2$ and $f$ defines the same divisor $D$, we have $f=ca_2$ for some $c\in\CC^\times$. 
By replacing $a_1$ with $a_1/c$, we obtain $F=a_0^2+fa_1$, hence (ii) holds. 

For the last assertion, suppose that $D^+$ is irreducible, and that 
(ii) holds. 
Put $\mcM:=\mcO_Y(D-L)\oplus\mcO_Y$. 
Then we have the morphism $M_f:\mcM(-L)\to\mcM$ represented by the matrix 
\begin{align*}
	M_f&=
	\begin{pmatrix}
		a_0 & f \\ a_1 & -a_0
	\end{pmatrix}
	:
	\mcO_Y(D-2L)\oplus\mcO_Y(-L)\to\mcO_Y(D-L)\oplus\mcO_Y 
\end{align*}
Let $\mcL_f$ be the line bundle on $X$ associated to $(\mcM,M_f)$, 
and let $\upsilon_f:\mcM\to\phi_\ast\mcL_f$ be the natural isomorphism in Corollary~\ref{cor:corr} \ref{cor:corr_morphism}. 
By the above argument, we can see that $\tilde{f}:=\upsilon_f(\bm{b}_2)$ defines a component $\widetilde{D}$ of $\phi^\ast D$. 
Since $D^+$ is irreducible, either $\widetilde{D}=D^+$ or $\widetilde{D}=\iota^\ast D^+$. 
Hence either $\mcO_X(D^+)\cong\mcL_f$ or $\mcO_X(D^+)\cong\iota^\ast\mcL_f$. 
Since $\phi_\ast\mcL_f\cong\mcM$, we obtain $\phi_\ast\mcO_X(D^+)\cong\mcM$ by Proposition~\ref{prop:inverse}. 
\end{proof}

Theorem~\ref{thm:split} implies that generators of $\sbpic_\phi(X)$ correspond to equations of the form $F=a_0^2+a_1a_2$. 
Hence we can expect that $\sbpic_\phi(X)$ reflects the arrangement of $B_\phi$ in $Y$ enough to describe the structure of $\pic(X)$. 
In several examples below, the equation $\sbpic_\phi(X)=\pic(X)$ holds (see also Example~\ref{ex:rami}). 

\begin{ex}
Let $X$ be a hyperelliptic curve, and let $\phi:X\to\PP^1$ be a non-singular double cover. 
Since any rank $2$-bundle on $\PP^1$ splits, we have $\pic(X)=\sbpic_\phi(X)$. 
\end{ex}

\begin{ex}\label{ex:conic}
Let $\phi:X\to\PP^2$ be a double cover branched along a smooth conic $B_\phi$. 
Note that $\deg(L)=1$. 
Then $X\cong\PP^1\times\PP^1$ and $\pic(X)\cong\ZZ\oplus\ZZ$. 
Let $D^+$ be a ruling of $X\cong\PP^1\times\PP^1$. 
The image $D=\phi(D^+)$ is a tangent line of $B_\phi$. 
Let $f\in\HH^0\big(\PP^2,\mcO_{\PP^2}(1)\big)$ be a section defining $D$. 
Then $B_\phi$ is given by $a_0^2+fa_1=0$ for some $a_0,a_1\in\HH^0\big(\PP^2,\mcO_{\PP^2}(1)\big)$. 
By Proposition~\ref{prop:inverse} and Theorem~\ref{thm:split}, 
we obtain 
	$\phi_\ast \mcO_X(D^+)\cong\phi_\ast\mcO_X(\iota^\ast D^+)\cong\mcO_Y^{\oplus 2}$. 
Since $\pic(X)$ is generated by $\mcO_X(D^+)$ and $\mcO_X(\iota^\ast D^+)$, we have $\pic(X)=\sbpic_{\phi}(X)$. 
By \cite{schwarzenberger1961}, it is known that $\phi_\ast\mcO_X(m D^+)$ is indecomposable if $|m|\geq 2$. 
\end{ex}

\begin{ex}\label{ex:quartic}
Let $\phi:X\to\PP^2$ be a double cover branched along a smooth quartic $B_\phi$. Note that $\deg(L)=2$. 
Then $X$ is isomorphic to the blowing-up of $\PP^2$ at $7$ points in general position. 
Moreover $\pic(X)$ is generated by $8$ $(-1)$-curves $E_0,\dots,E_7$, where $E_0$ is the strict transform of a line passing through two bowing-up centers, and $E_1,\dots,E_7$ are the exceptional divisors. 
The images $\phi(E_0),\dots,\phi(E_7)$ are $8$ of $28$ bitangent lines of $B_\phi$. 
Let $f_j\in\HH^0\big(\PP^2,\mcO_{\PP^2}(1)\big)$ be a section defining $\phi(E_j)$ for each $j=0,\dots,7$. 
Then there exist global sections $a_{j,k}$ ($k=0,1$) of $\mcO_{\PP^2}(k+2)$ such that $B_\phi$ is defined by $a_{j,0}^2+f_ja_{j,1}=0$. 
Hence $\phi_\ast\mcO_X(E_j)\cong\mcO_{\PP^2}(-1)\oplus\mcO_{\PP^2}$ for each $j=0,\dots,7$ by Theorem~\ref{thm:split}. 
Therefore we obtain $\pic(X)=\sbpic_\phi(X)$. 
In \cite{schwarzenberger1961} and \cite{valles2009}, it is shown that there are line bundles $\mcL$ on $X$ such that $\phi_\ast\mcL$ is indecomposable. 
\end{ex}

The following conjecture and problem arise. 

\begin{conj}\label{conj:spic}
If $\HH^0(Y,\mcO_Y)=\CC$, then $\pic(X)=\sbpic_\phi(X)$. 
\end{conj}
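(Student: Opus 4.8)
The plan is to combine the converse half of Theorem~\ref{thm:split} with the elementary fact that, $X$ being smooth, $\pic(X)=\mathrm{Cl}(X)$ is generated by the classes of prime divisors; it then suffices to place each $[\mcO_X(D^+)]$, for $D^+$ an irreducible reduced divisor, into $\sbpic_\phi(X)$. These prime divisors fall into two types under the covering involution $\iota$. If $D^+=\iota^\ast D^+$, then either $\iota$ acts nontrivially on $D^+$, so $\phi|_{D^+}$ is generically two-to-one and $\mcO_X(D^+)\cong\phi^\ast\mcO_Y(D)$ with $D=\phi(D^+)$ lies in $\phi^\ast\pic(Y)\subset\sbpic_\phi(X)$; or $D^+$ is a component of the ramification locus $R_\phi$, in which case $\mcO_X(D^+)$ splits by the computation in Example~\ref{ex:rami}. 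Hence these classes already lie in $\sbpic_\phi(X)$, and the whole content of the conjecture is to treat the prime divisors $D^+$ with $D^+\neq\iota^\ast D^+$.

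First I would fix such a $D^+$, let $D=\phi(D^+)$ be its reduced image so that $\phi^\ast D=D^++\iota^\ast D^+$, and let $f\in\HH^0\big(Y,\mcO_Y(D)\big)$ define $D$. By the last assertion of Theorem~\ref{thm:split} (valid since $\HH^0(Y,\mcO_Y)=\CC$), the class $[\mcO_X(D^+)]$ splits, hence lies in $\sbpic_\phi(X)$, as soon as condition {\rm (ii)} holds: $F=a_0^2+fa_1$ for global sections $a_0\in\HH^0\big(Y,\mcO_Y(L)\big)$ and $a_1\in\HH^0\big(Y,\mcO_Y(2L-D)\big)$. Granting $a_0$, the section $F-a_0^2\in\HH^0\big(Y,\mcO_Y(2L)\big)$ vanishes along the reduced divisor $D=\{f=0\}$, so $a_1:=(F-a_0^2)/f$ is automatically a section of $\mcO_Y(2L-D)$; thus {\rm (ii)} reduces to the single requirement that $F$ be congruent to a square $a_0^2$ modulo $f$, and the conjecture is transferred entirely to a statement about $F$ and the images $D$.

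The geometric reason to expect such a square root is that the induced double cover $\phi^{-1}(D)\to D$ is generically split: its total space $D^+\cup\iota^\ast D^+$ is reducible precisely because $D^+\neq\iota^\ast D^+$. Over the generic point this forces $F|_D$ to be a square in $\CC(D)$, and on the normalization $\nu:\widetilde{D}\to D$ the triviality of the pulled-back cover upgrades this to a \emph{section}: there is $\widetilde{a}_0\in\HH^0\big(\widetilde{D},\nu^\ast\mcO_D(L)\big)$ with $\widetilde{a}_0^2=\nu^\ast(F|_D)$, the square root cutting out exactly one half of $B_\phi\cap D$. What is \emph{not} automatic is that this square root descends to $D$ and then extends to a global section of $\mcO_Y(L)$ on all of $Y$. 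This extension is the crux, and it is here that the embedding $B_\phi\subset Y$ enters, in accordance with the heuristic of Section~\ref{sec:subgroup} that $\sbpic_\phi(X)$ records the position of $B_\phi$ in $Y$.

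The natural route to the extension is cohomological. From $0\to\mcO_Y(L-D)\to\mcO_Y(L)\to\mcO_D(L)\to0$ one obtains a lift of $a_0|_D$ as soon as the restriction $\HH^0\big(Y,\mcO_Y(L)\big)\to\HH^0\big(D,\mcO_D(L)\big)$ hits the relevant class, for which it suffices that $\HH^1\big(Y,\mcO_Y(L-D)\big)=0$ as $D$ ranges over a generating set of prime divisor classes. For $Y=\PP^n$ with $n\geq2$ every such group vanishes, which is essentially the content of \cite{shirane2021}, and I would first try to carry this out in general. The hard part will be that for an arbitrary smooth projective $Y$ these intermediate cohomology groups need not vanish, so the half-branch square root on $D$ may genuinely fail to lift. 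Overcoming this will require either a Noether--Lefschetz-type argument tailored to the pair $(L,B_\phi)$ ensuring that the class of $\tfrac12(B_\phi\cap D)$ always lies in the image of restriction, or a direct construction of splitting generators from factorizations $F=a_0^2+a_1a_2$ in the spirit of Examples~\ref{ex:conic} and \ref{ex:quartic}; I regard this lifting problem as the real difficulty that keeps the equality $\pic(X)=\sbpic_\phi(X)$ conjectural.
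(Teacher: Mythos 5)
The statement you set out to prove is Conjecture~\ref{conj:spic}: the paper offers no proof of it, only supporting examples (hyperelliptic curves, and double covers of $\PP^2$ branched at a conic or a smooth quartic) and the remark that the case $Y\cong\PP^n$ is settled in \cite{shirane2021}, so there is no argument in the paper to compare yours against, and a complete blind proof could not be expected. What you have written is, by your own closing admission, a reduction rather than a proof --- but the reduction is sound and matches the paper's own framing. Decomposing prime divisors by the $\iota$-action is correct: if $\iota^\ast D^+=D^+$ with $\iota$ acting nontrivially on $D^+$, then $\phi^\ast D=D^+$ and $[\mcO_X(D^+)]\in\phi^\ast\pic(Y)\subset\sbpic_\phi(X)$ by the projection formula, as in Remark~\ref{rem:generator}. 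One small imprecision: Example~\ref{ex:rami} literally treats only the full ramification divisor $R_\phi$; for a single component of $R_\phi$ lying over a factor in $F=F_1F_2$ you should instead invoke condition (ii) of Theorem~\ref{thm:split} with $f=F_1$, $a_0=0$, $a_1=F_2$, so this case is subsumed by the same theorem you use for the non-invariant primes. For $D^+\neq\iota^\ast D^+$ your appeal to the last assertion of Theorem~\ref{thm:split} is legitimate (it requires only irreducibility of $D^+$ and condition (ii), not the hypothesis on $D'$), the observation that $a_1:=(F-a_0^2)/f$ is automatically a section of $\mcO_Y(2L-D)$ once $F\equiv a_0^2$ modulo the reduced $f$ is correct since $Y$ is smooth, and the square root on the normalization exists for the reason you give: generic splitness of the restricted cover yields a rational section $g$ of $\nu^\ast\mcO_D(L)$ with $g^2=\nu^\ast(F|_D)$, and normality of $\widetilde D$ forces $2v_E(g)=v_E\bigl(\nu^\ast(F|_D)\bigr)\geq0$ along every prime divisor $E$, so $g$ is regular.

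The genuine gap is exactly the one you name: descending $\widetilde a_0$ from $\widetilde D$ to the (possibly quite singular) divisor $D$, and then extending it to a global section $a_0\in\HH^0\bigl(Y,\mcO_Y(L)\bigr)$. Your sufficient condition $\HH^1\bigl(Y,\mcO_Y(L-D)\bigr)=0$ cannot be arranged for all generating $D$ on a general $Y$, and even where the cohomological obstruction vanishes the descent through $\nu_\ast\nu^\ast\mcO_D(L)\supsetneq\mcO_D(L)$ is not automatic; this is precisely why the statement is a conjecture. Your endpoint --- that Conjecture~\ref{conj:spic} is equivalent to every image $D=\phi(D^+)$ of a non-invariant prime $D^+$ admitting a factorization $F=a_0^2+fa_1$ --- is in fact the paper's own heuristic in Section~\ref{sec:subgroup}, where it is observed after Theorem~\ref{thm:split} that generators of $\sbpic_\phi(X)$ correspond to equations of the form $F=a_0^2+a_1a_2$. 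So your text should be read as a correct and honest reformulation of the open problem, not as a proof, and it would be improved only by fixing the Example~\ref{ex:rami} attribution and by acknowledging the normalization-descent issue on $\PP^n$ as well.
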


\begin{prob}
Describe the rank of $\sbpic_\phi(X)$	in terms of the branch locus $B_\phi$ and the divisor $L$. 
\end{prob}

\begin{rem}
In \cite{shirane2021}, it is proved that Conjecture~\ref{conj:spic} holds true if $Y$ is isomorphic to the projective space $\PP^n$ for any $n\geq 1$. 
\end{rem}

\section{An idea to generate $2$-bundles}\label{sec:normal}

In this section, we give an idea to generate $2$-bundles through double covers. 
We call $\phi:X\to Y$ a \textit{normal double cover} if $\phi$ is a finite surjective morphism of degree two from a normal variety $X$ to a smooth variety $Y$ over $\CC$. 
Let $\wcl(X)$ be the divisor class group of $X$. 
Note that 
there is a canonical one-to-one correspondence between $\wcl(X)$ and the set of divisorial sheaves on $X$ (cf. \cite{schwede_div}). 
As the idea of \cite{catper2017}, we can apply our method to divisorial sheaves on normal double covers as follows. (See \cite{hartshorne1980} for general results on reflexive sheaves.) 

Let $\phi:X\to Y$ be a normal double cover. 
Let $X^\circ$ be the smooth locus $X\setminus\Sing(X)$ of $X$, and put $Y^\circ:=\phi(X^\circ)$. 
Then the restriction $\phi^\circ:X^\circ\to Y^\circ$ of $\phi$ is a non-singular double cover. 
For a divisorial sheaf $\mcL$ on $X$, the restriction $\mcL^\circ$ of $\mcL$ to $X^\circ$ is a line bundle on $X^\circ$, and $i_\ast\mcL^\circ=\mcL$ and $j_\ast\phi^\circ_\ast\mcL^\circ=\phi_\ast\mcL$ hold, where $i:X^\circ\to X$ and $j:Y^\circ\to Y$ are the inclusion maps. 
Hence computation of push-forwards of line bundles on $X^\circ$ can be applied to that of divisorial sheaves on $X$ via $j_\ast$. 
By modifying the proof of \cite[Theorem~3]{schwarzenberger1961}, we can prove Theorem~\ref{thm:push}. 

\begin{proof}[Proof of Theorem~\ref{thm:push}]
Let $\PP_\mcE$ be the $\PP^1$-bundle $\Proj(S(\mcE))$, and let $p:\PP_\mcE\to Y$ be the projection, where $S(\mcE)$ is the symmetric algebra of $\mcE$. 
Let $\mcH$ be a very ample line bundle on $Y$. 
The line bundle $\mcH$ gives the embedding $\Phi_{\mcH}:Y\hookrightarrow\PP^s$ with $s+1=\dim \HH^0(Y,\mcH)$. 
For $k$ large enough, we have the following exact sequence:
\begin{align*}
	\mcH^{\oplus r+1}\to\mcE\otimes\mcH^k\to 0
\end{align*}
for some $r>0$. 
This induces an embedding $i:\PP_\mcE\hookrightarrow\PP^N$ for $N=rs+r+s$ via the Segre embedding $\PP^r\times\PP^s\hookrightarrow\PP^N$. 
Put $\widetilde{\mcL}:=i^\ast\mcO_{\PP^N}(1)$. 
Note that $i(p^{-1}(P))$ is a line in $\PP^N$ for each $P\in Y$, and $p_\ast\widetilde{\mcL}\cong\mcE\otimes\mcH^k$. 
Hence $i:\PP_\mcE\hookrightarrow\PP^N$ induces an embedding $i':Y\hookrightarrow \Gr_1(N)$ by $P\mapsto i(p^{-1}(P))$, where $\Gr_1(N)$ is the Grassmannian consisting of lines in $\PP^N$. 


For a quadratic hypersurface $Q\subset\PP^N$, let $V(Q)$ be the subscheme of $\Gr_1(N)$ consisting of lines on $Q$. 
Note that $\PGL(N,\CC):=\Aut(\PP^N)$ acts transitively on both of $\PP^N$ and $\Gr_1(N)$ such that $V(g(Q))=g(V(Q))$ for any $g\in\PGL(N,\CC)$ and $Q\subset\PP^N$. 
Since $\dim\Gr_1(N)=2N-2$, $\dim V(Q)=2N-5$ (cf. \cite{beheshti2006}) and $\dim Y=n$, we obtain $\dim Y\cap V(Q)=n-3$ for a general hypersurface $Q\subset\PP^N$ of degree $2$ by \cite[Theorem~2]{kleimann1974}. 
Put $X':=\PP_\mcE\cap Q$ for a general quadratic hypersurface $Q\subset\PP^N$ such that $\dim i'(Y)\cap V(Q)=n-3$ and $X'$ is smooth. 
Let $\mcL'$ be the restriction of $\widetilde{\mcL}$ to $X'$. 
For an affine open set $U$ of $Y$, the K\"unneth formula for sheaves implies that 
\begin{align*}
	R^q p_\ast\widetilde{\mcL}^{-1}(U)=\HH^q(U\times \PP^1, \mcO_U\otimes\mcO_{\PP^1}(-1))=0
\end{align*}
for all $q\geq 0$. 
Since $\widetilde{\mcL}\otimes\mcJ_{X'}\cong\widetilde{\mcL}^{-1}$ for the ideal sheaf $\mcJ_{X'}\cong\widetilde{\mcL}^{-2}$ of $X'$, 
we have $p'_\ast\mcL'\cong\mcE\otimes\mcH^k$ by \cite[Proposition~5]{schwarzenberger1961}. 

Then the restriction $p':=p|_{X'}:X'\to Y$ is a generically finite morphism of degree $2$. 
Let $U':=\{P\in Y \mid \mbox{$(p')^{-1}(P)$ is finite}\}$. 
By Stein factorization of $p'$, we obtain a birational morphism $f':X'\to X''$ and a finite morphism $g':X''\to Y$ such that $g'\circ f'=p'$. 
Take the normalization $\kappa:X\to X''$, and put $\phi:=g'\circ\kappa:X\to Y$, which is a normal double cover. 
\[ 
\begin{tikzpicture}
	\node (x) at (-1.5,1.6) {$X$};
	\node (x') at (1.5,2) {$X'$};
	\node (x'') at (0,1.2) {$X''$};
	\node (y1) at (-1.5,0) {$Y$};
	\node (y2) at (0,0) {$Y$};
	\node (y3) at (1.5,0) {$Y$};
	\node (y4) at (3,0) {$Y$};
	\node (p) at (3,2) {$\PP_\mcE$};
	\node (pp) at (4.5,2) {$\PP^N$};

	\draw[->] (x') -- node[above, scale=0.7] {$\subset$} (p);
	\draw[->] (x') -- node[above, scale=0.7] {$f'$} (x'');
	\draw[->] (x) -- node[above, scale=0.7] {$\kappa$} (x'');
	\draw[->] (p) -- node[right, scale=0.7] {$p$} (y4);
	\draw[->] (x') -- node[right, scale=0.7] {$p'$} (y3);
	\draw[->] (x'') -- node[right, scale=0.7] {$g'$} (y2);
	\draw[->] (x) -- node[right, scale=0.7] {$\phi$} (y1);
	\draw[double distance=2.5pt] (y1) -- (y2);
	\draw[double distance=2.5pt] (y2) -- (y3);
	\draw[double distance=2.5pt] (y3) -- (y4);
	\draw[->] (p) --node[above, scale=0.7] {$i$} (pp);
%
%
\end{tikzpicture}
\]
Let $\mcL$ be the double dual $(\kappa^\ast f_\ast\mcL')^{\vee\vee}$ of $\kappa^\ast f_\ast\mcL'$. 
Then $\mcL$ is a divisorial sheaf on $X$, and $\phi_\ast\mcL|_{U'}\cong p'_\ast\mcL'|_{U'}$ since $f'$ and $\kappa$ are isomorphic over $U'$. 
Since $\phi_\ast\mcL$ is reflexive by \cite[Corollary~1.7]{hartshorne1980} and $\codim_Y(Y\setminus U')=3$, $\phi_\ast\mcL\cong p'_\ast\mcL'\cong\mcE\otimes\mcH^k$. 
Therefore, $\phi_\ast(\mcL\otimes\phi^\ast\mcH^{-k})\cong\mcE$. 
\end{proof}

{\rev
Let $\phi:X\to Y$ be a normal double cover over a smooth projective variety $Y$ branched at $B\subset Y$, and let $F$ be a global section of $\mcO_Y(B)$ defining $B$. 
If $F$ has several representations of the form $F=a_0^2+a_1a_2$, then we can expect that many $2$-bundles on $Y$ are generated by the following method: 
\begin{enumerate}[label=\rm (\roman{enumi})]
	\item Let $\phi^\circ:X^\circ\to Y^\circ$ be the non-singular double cover as above;
	\item take several line bundles $\mcL_1,\dots,\mcL_m$ on $X^\circ$ such that $\phi^\circ_\ast\mcL_i$ is split, and compute $2$-bundles $\phi^\circ_\ast(\mcL_1^{n_1}\otimes\dots\otimes\mcL_m^{n_m})$ on $Y^\circ$ by Theorem~\ref{thm:group_law};
	\item\label{method:fainal} then $j_\ast\phi^\circ_\ast(\mcL_1^{n_1}\otimes\dots\otimes\mcL_m^{n_m})$ are reflexive sheaves of rank two. 
\end{enumerate}
If $\phi:X\to Y$ is non-singular, then the reflexive sheaves in \ref{method:fainal} are $2$-bundles. 
This methods has the following problem:

\begin{prob}
	\begin{enumerate}[label=\rm (\arabic{enumi})]
		\item Give a condition for a reflexive sheaf $j_\ast\phi^\circ_\ast(\mcL_1^{n_1}\otimes\dots\otimes\mcL_m^{n_m})$ in \ref{method:fainal} to be a $2$-bundle. 
		\item Which normal double cover $\phi:X\to Y$ generates many $2$-bundles on $Y$ by the above method? 
	\end{enumerate}
\end{prob}
If Conjecture~\ref{conj:spic} is true, then there is a normal double cover $\phi_\mcE :X\to Y$ for any $2$-bundle $\mcE$ on a smooth projective variety $Y$ such that $\mcE$ is generated by the above method using $\phi_\mcE$. 
}


\section{Direct summands of $2$-bundles on $\PP^1$}\label{sec:P1}

It is known due to Grothendieck that any $2$-bundle on $\PP^1$ splits. 
Let $\Gr_1(n)$ be the Grassmannian consisting of lines in $\PP^n$. 
We identify a point of $\Gr_1(n)$ with the line in $\PP^n$ corresponding to the point. 
For a $2$-bundle $\mcE$ on $\PP^n$, we have a map $a_{\mcE}:\Gr_1(n)\to\ZZ^2$ given by $a_\mcE(L)=(k_1,k_2)$ for $L\in\Gr_1(n)$ if 
\[ \mcE|_L\cong\mcO_{\PP^1}(k_1)\oplus\mcO_{\PP^1}(k_2) \quad (k_1\geq k_2). \]
It is known that a maximal subset $U_\mcE$ of $\Gr_1(n)$ with the restriction $a_\mcE|_{U_\mcE}:U_\mcE\to\ZZ^2$ constant is a non-empty Zariski-open subset of $\Gr_1(n)$ (cf. \cite{okonek2011}). 
A line $L\subset\PP^n$ is called a \textit{jumping line} of $\mcE$ if $L\not\in U_\mcE$. 
It is clear that $\mcE$ is indecomposable if $ U_\mcE\subsetneq\Gr_1(n)$. 

In this section, we investigate a method of computing direct summands of a $2$-bundle from transition functions $G_{ij}$. 
This method enables us to compute jumping lines of a $2$-bundle on $\PP^n$ if we know transition functions of the $2$-bundle. 
For this aim, we give a proof of the well-known result of Grothendieck by using transition functions. 

We first construct a matrix $\Eu_x(G)$ for $G\in\GL(2,\CC(x))$ such that $\Eu_x(G)G$ is an upper triangular matrix. 
For $p\in\CC$, let $v_p:\CC(x)\setminus\{0\}\to\ZZ$ be the valuation at $p\in\Aa^1=\CC$, and put $v_p(0):=\infty$. 
For $G:=(g_{ij})\in\GL(2,\CC)$, let 
\[ \gd_x(G):=\prod_{p\in\CC}(x-p)^{\min\{v_p(g_{ij})\mid i,j=1,2\}}. \]
Let $G,J\in\GL(2,\CC(x))$ be matrices as follows;
\begin{align}\label{eq:def_G}
	G&:=\gd_x(G)
	\begin{pmatrix}
		a & b \\ c & d
	\end{pmatrix},
	\qquad
	J:=
	\begin{pmatrix}
		0&1\\1&0
	\end{pmatrix}.
\end{align}
Note that we have $\gd_x(G)\ne0$ and $a,b,c,d\in\CC[x]$ with $\gcd(a,b,c,d)=1$. 
Put $A_k$ for $k\geq 0$ as~$A_0:=E$,
\begin{align*}
	G_k&:=A_{k}G=\gd_x(G) 
	\begin{pmatrix}
		a_k & b_k \\ c_k & d_k
	\end{pmatrix},
	\\[0.5em]
	A_{k+1}&:=
	\left\{
	\begin{array}{ll}
		JA_{k}
		& \mbox{if $\deg_x(a_{k})>\deg_x(c_{k})$ \ or \ $a_k=0$,}
		\\[1.0em]
		\begin{pmatrix}
			1 & 0 \\ s_{k+1} & 1
		\end{pmatrix}A_{k}
		& \mbox{if $a_k,c_k\ne0$ and $\deg_x(a_{k})\leq\deg_x(c_{k})$},
		\\[1em]
		A_k
		& \mbox{if $c_k=0$,}
	\end{array}
	\right.
\end{align*}
where $s_k\in\CC[x]$ is the polynomial with $\deg_x(c_{k-1}+s_ka_{k-1})<\deg_x(a_{k-1})$. 
By Euclidian algorithm, $c_m=0$ for some $m\geq 0$. 
Note that $a_md_m=\pm\det(G)\ne0$. 
Let $a_{m,0},d_{m,0}\in\CC^\times$ be the coefficients of the leading terms of $a_m, d_m$, respectively. 
Put 
\begin{align*}
	B_m&:=
	\begin{pmatrix}
		a_{m,0}^{-1} & s_m \\ 0 & d_{m,0}^{-1}
	\end{pmatrix}, 
	&
	\Eu_x(G)&:=B_mA_m,
\end{align*}
where $s_m$ is the polynomial satisfying $\deg_x(a_{m,0}^{-1}b_m+s_md_m)<\deg_x(d_m)$. 
Then $\Eu_x(G)G$ is of the form 
\begin{align}
	\Eu_x(G)G&=\gd_x(G)
	\begin{pmatrix}
		a' & b' \\ 0 & d'
	\end{pmatrix}
	\label{eq:euclid_1}
\end{align}
such that $a',d'\in\CC[x]$ are monic, $\deg_x(b')<\deg_x(d')$ and $a'=\gcd(a,c)$. 
For an affine open $U\subset\Aa^1$ and $a\in\CC(x)\setminus\{0\}$, let 
\[ \den_U(a):= \prod_{p\in\Aa^1\setminus U}(x-p)^{v_p(a)}, \qquad \neu_U(a):=\frac{a}{\den_U(a)}. \]
Note that $\den_U(a)\in\Gamma(U,\mcO_U^\times)$, and $\den_U(a)\den_U(b)=\den_U(ab)$ and $\neu_U(a)\neu_U(b)=\neu_U(ab)$ for $a,b\in\CC(x)\setminus\{0\}$. 

Let $\gtU:=\{U_i\}_{i=0}^n$ be an affine open covering of $\Aa^1$, and let $G_{ij}\in\GL(2,\mcO_{U_i\cap U_j})$ ($i,j=0,\dots,n$) be transition functions of a $2$-bundle $\mcE$ on $\Aa^1$. 
We construct matrices $A_i\in\GL(2,\mcO_{U_i})$ such that $A_i^{-1}G_{ij}A_j=E$. 
We regard $\CC[x]$ as the coordinate ring of $\Aa^1$. 
Put $A_0^{(0)}:=E$. 
We define $A_i^{(k)}$ for $0\leq i\leq k\leq n$ in Algorithm~\ref{algo1} below. 
Let us write $G_{0k}^{(k-1)}:=\Eu_x(G_{k0}A_0^{(k-1)})G_{k0}A_0^{(k-1)}$ as 
\[ G_{k0}^{(k-1)}=g_k
\begin{pmatrix}
	a_k & b_k \\ 0 & d_k
\end{pmatrix},
\qquad \mbox{where \ } g_k:=\gd_x(G_{k0}^{(k-1)}). 
 \]
Since $\gcd(\neu_{U_k}(a_k), \den_{U_k}(d_k))=1$, there are polynomials $\alpha_k,\beta_k\in\CC[x]$ such that $\neu_{U_k}(a_k)\den_{U_k}(d_k)b_k+\alpha_k\neu_{U_k}(a_k)+\beta_k\den_{U_k}(d_k)=0$. 
\begin{algorithm}[H]
\caption{Computation of a global basis for a $2$-bundle on $\Aa^1$}	
\label{algo1}
\begin{algorithmic}[1]
\REQUIRE The transition functions $G_{ij}$ ($i,j=0,\dots,n$) of a $2$-bundle on $\Aa^1$.
\ENSURE The matrices $A_i$ ($i=0,\dots,n$) such that $A_i^{-1}G_{ij}A_j=E$.
\STATE $A_0^{(0)}:=E$; $k:=1$;
\WHILE{$k\leq n$}
	\STATE $i:=0$;
	\WHILE{$i\leq k-1$}
	\STATE $A_i^{(k)}:=\frac{1}{\neu_{U_k}(g_k a_k d_k)}A_i^{(k-1)}
		\begin{pmatrix}\neu_{U_k}(d_k) & \alpha_k \\ 0 & \neu_{U_k}(a_k)\end{pmatrix}$;
		\STATE $i:=i+1$;
	\ENDWHILE
	\STATE $A_k^{(k)}:=\left( \frac{1}{\den_{U_k}(g_k a_k d_k)}
	\begin{pmatrix}
	\den_{U_k}(d_k) & \beta_k \\ 0 & \den_{U_k}(a_k)
	\end{pmatrix}
	\Eu_x(G_{k0}A_0^{(k-1)})
	\right)^{-1}$;
	\STATE $k:=k+1$;
\ENDWHILE
\FOR{$i=0,\dots,n$}
\STATE $A_i:=A_i^{(n)}$;
\ENDFOR
\RETURN $A_0,\dots,A_n$.
\end{algorithmic}
\end{algorithm}


\begin{prop}\label{prop:A1} 
For $A_i$ $(i=0,\dots,n)$ obtained by Algorithm~\ref{algo1}, the followings hold. 
\begin{enumerate}[label=\rm (\roman{enumi})]
	\item $A_i\in\GL(2,\mcO_{U_i})$ for $i=0,\dots,n$. 
	\item $A_i^{-1}G_{ij}A_j=E$ for $i,j=0,\dots,n$. 
\end{enumerate}	
In particular, any $2$-bundle on $\Aa^1$ is isomorphic to $\mcO_{\Aa^1}^{\oplus 2}$. 
\end{prop}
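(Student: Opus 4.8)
The plan is to verify that Algorithm~\ref{algo1} terminates and produces matrices $A_i$ satisfying the two asserted properties, and then to read off the splitting. Since the $2$-bundle $\mcE$ is defined on the affine line $\Aa^1$ (not yet $\PP^1$), the strategy is to show directly that $\mcE$ is trivial, i.e., isomorphic to $\mcO_{\Aa^1}^{\oplus 2}$; the claim for $\PP^1$ then reduces to gluing the trivialization over the two standard affine charts, whence Grothendieck's splitting follows by comparing degrees. The heart of the matter is therefore the correctness of the algorithm, which I would establish by induction on the counter $k$.

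First I would record the key property of the operator $\Eu_x$: by construction (\ref{eq:euclid_1}), for any $G\in\GL(2,\CC(x))$ the product $\Eu_x(G)G$ is upper triangular with monic diagonal entries, and $\Eu_x(G)$ itself lies in $\GL(2,\CC[x])$ (it is a product of the elementary matrices $J$, lower-unipotent matrices, and the final normalizing matrix $B_m$, all of which have polynomial entries and unit determinant up to constants). The Euclidean-algorithm bookkeeping guarantees termination, since $\deg_x(c_k)$ strictly decreases until $c_m=0$. This is the local triangularization step on which everything rests.

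Next I would set up the induction. The loop invariant to maintain is: after the $k$-th pass of the outer \textbf{while}, the matrices $A_0^{(k)},\dots,A_k^{(k)}$ satisfy $A_i^{(k)}\in\GL(2,\mcO_{U_i})$ and $(A_i^{(k)})^{-1}G_{ij}A_j^{(k)}=E$ for all $0\le i,j\le k$. The base case $k=0$ is trivial with $A_0^{(0)}=E$. For the inductive step, the new chart $U_k$ is incorporated by triangularizing $G_{k0}A_0^{(k-1)}$ via $\Eu_x$ and then splitting the resulting upper-triangular cocycle into a part holomorphic on $U_k$ and a part holomorphic on the already-trivialized charts, using the factorization $a_k = \den_{U_k}(a_k)\,\neu_{U_k}(a_k)$ and the B\'ezout relation $\neu_{U_k}(a_k)\den_{U_k}(d_k)b_k+\alpha_k\neu_{U_k}(a_k)+\beta_k\den_{U_k}(d_k)=0$. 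The point is that $\den_{U_k}$ produces units on $U_k$ while $\neu_{U_k}$ produces units on each $U_i$ with $i<k$ (since the bad points of $d_k,a_k$ have already been absorbed), so the matrix multiplying $A_i^{(k-1)}$ on the right for $i<k$ lies in $\GL(2,\mcO_{U_i})$, and the separately defined $A_k^{(k)}$ lies in $\GL(2,\mcO_{U_k})$; a direct check using the B\'ezout identity shows the cocycle condition $(A_i^{(k)})^{-1}G_{ik}A_k^{(k)}=E$ is restored while the previously established relations among charts $i,j<k$ are preserved.

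The main obstacle I anticipate is precisely this verification that the two separately constructed pieces—the right factor applied uniformly to the old $A_i^{(k-1)}$ for $i<k$, and the freshly built $A_k^{(k)}$—are compatible, i.e. that they glue to give $(A_i^{(k)})^{-1}G_{ik}A_k^{(k)}=E$ simultaneously for every $i<k$. This hinges on the fact that the \emph{same} upper-triangular reduction of $G_{k0}A_0^{(k-1)}$ governs the transition to $U_k$ from \emph{all} earlier charts through the already-trivialized cocycle $G_{ik}=G_{i0}G_{0k}$; tracking the bookkeeping of which factors contribute units on which open sets, and confirming that the B\'ezout splitting distributes the poles correctly between $\den_{U_k}$ and $\neu_{U_k}$, is the delicate part. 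Once the invariant is proved, setting $A_i:=A_i^{(n)}$ gives (i) and (ii), and the sections $A_i\mathbf{e}_1,A_i\mathbf{e}_2$ glue to a global frame, so $\mcE\cong\mcO_{\Aa^1}^{\oplus 2}$; the passage to $\PP^1$ and Grothendieck's theorem then follows by the standard two-chart argument.
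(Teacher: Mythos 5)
Your proposal is correct and follows essentially the same route as the paper: induction on the counter $k$ with the invariant that the matrices $A_i^{(k)}$ trivialize the cocycle and are regular on their charts, where the delicate point you single out—that the $\neu_{U_k}$-parts of $g_k,a_k,d_k$ are units on all earlier charts—is justified exactly as in the paper, via the identity $G_{k0}A_0^{(k-1)}=G_{ki}A_i^{(k-1)}\in\GL\big(2,\mcO_{U_i\cap U_k}\big)$ pushed through the triangularization $\Eu_x$, with the B\'ezout relation killing the off-diagonal entry by direct computation. The only (cosmetic) difference is organizational: the paper first proves the rational identity $G_{i0}A_0^{(k)}=A_i^{(k)}$ over $\CC(x)$, which yields (ii) from the cocycle relations, and then runs a separate induction for the regularity statement (i), whereas you carry both claims in a single loop invariant.
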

\begin{proof}
Note that $A_i^{(k)}\in\GL(2,\CC(x))$. 
We first prove $G_{i0}A_0^{(k)}=A_i^{(k)}$ for $0\leq i\leq k\leq n$. 
In the case of $k=0$, the equation is trivial. 
For $k\geq 1$, suppose $G_{i0}A_0^{(k-1)}=A_i^{(k-1)}$ for $i=0,\dots,k-1$.  
By the definition of $A_i^{(k)}$, we have $(A_i^{(k-1)})^{-1}A_i^{(k)}=(A_0^{(k-1)})^{-1}A_0^{(k)}$ for any $i=0,\dots,k-1$. 
Thus we obtain 
\begin{align*}
	G_{i0}A_0^{(k)}=G_{i0}A_0^{(k-1)}(A_i^{(k-1)})^{-1}A_i^{(k)}=A_i^{(k)}
\end{align*}
for $i=0,\dots,k-1$. 
By direct computation, we can see $(A_k^{(k)})^{-1}G_{k0}A_0^{(k)}=E$. 
Hence $G_{i0}A_0^{(k)}=A_i^{(k)}$ holds for each $i=0,\dots,k$. 
In particular, $A_i^{-1}G_{i0}A_0=E$ for $i=0,\dots,n$. 
Thus 
\[ A_i^{-1}G_{ij}A_j=A_i^{-1}G_{i0}A_0(A_j^{-1}G_{j0}A_0)^{-1}=E, \]
and (ii) holds. 

We next prove (i) by induction on $k$. 
Assume that $A_i^{(k-1)}\in\GL(2,\mcO_{U_i})$ for $0\leq i\leq k-1$. 
By definitions of $\Eu_x(G)$ and $\den_{U}(a)$, we have $A_k^{(k)}\in\GL(2,\mcO_{U_k})$. 
By the above argument, we obtain 
\[ \GL(2,\mcO_{U_0\cap U_k})\ni G_{k0}A_0^{(k-1)} =G_{ki}G_{i0}A_{0}^{(k-1)}=G_{ki}A_i^{(k-1)}\in\GL(2,\mcO_{U_i\cap U_k}). \]
This implies that $G_{k0}A_0^{(k-1)}\in\GL(2,\mcO_{U^{(k)}\cap U_k})$, where $U^{(k)}:=U_0\cup\dots\cup U_{k-1}$. 
Hnece $G_{k0}^{(k-1)}\in\GL(2,\mcO_{U^{(k)}\cap U_k})$, and $g_k,a_k,d_k\in\Gamma(U^{(k)},\mcO_{U^{(k)}\cap U_k}^\times)$. 
Therefore $\neu_{U_k}(g_k), \neu_{U_k}(a_k),\neu_{U_k}(d_k)\in\Gamma(U^{(k)},\mcO_{U^{(k)}}^\times)$, and $A_i^{(k)}\in\GL(2,\mcO_{U_i})$ for $i=0,\dots,k-1$. 
\end{proof}

Next we compute the direct summands of a $2$-bundle $\mcE$ on $\PP^1$ from its transition functions. 
Let $u_0,u_1$ be homogeneous coordinates of $\PP^1$, and let $U_i\subset\PP^1$ be the affine open subset defined by $u_i\ne 0$ for $i=0,1$. 
Put 
\[ x:=\frac{u_1}{u_0}, \qquad y:=\frac{u_0}{u_1}. \]
Note that $\CC(\PP^1)=\CC(x)=\CC(y)$. 
Let $P_x,P_y\in\PP^1$ be the points defined by $u_1=0$ and $u_0=0$, respectively. 
By Proposition~\ref{prop:A1}, we may assume that a $2$-bundle on $\PP^1$ is defined by a transition function $G\in\GL(2,\mcO_{U_0\cap U_1})$. 

Let $G\in\GL(2,\mcO_{U_0\cap U_1})$ be as in (\ref{eq:def_G}). 
For $P\in\PP^1$, let $v_P:\CC(x)\setminus\{0\}\to\ZZ$ be the valuation of $P\in\PP^1$, and put $v_P(0):=\infty$, and  
let $v_{1}(G), v_{2}(G), v(G)\in\ZZ$ be the integers
\begin{align*}
	v_{1}(G)&:=\min\big\{v_{P_x}(a), v_{P_x}(c)\big\}, \\
	v_{2}(G)&:=\min\big\{v_{P_x}(b), v_{P_x}(d) \big\}, \\
	v(G)&:=\min\big\{v_{P_x}(a),v_{P_x}(b),v_{P_x}(c),v_{P_x}(d)\big\}. 
\end{align*}
We define $e_{1}(G), e_{2}(G)\in\ZZ$ as
\begin{align*}
	e_{1}(G):=v_{P_x}(a'), \qquad e_{2}(G):=v_{P_x}(d'),
\end{align*}
where $a',d'$ are in (\ref{eq:euclid_1}). 
Note that $v_{1}(G)=e_{1}(G)$ and $v_{2}(G)\leq e_{2}(G)$. 
Since $G\in\GL(2,\mcO_{U_0\cap U_1})$, we have $\det(G)=x^r$ for some $r\in\ZZ$ and $\gd_x(G)=x^{v(G)}$. 
Note that $2v(G)\leq r=v_{P_x}(\det(G))$. 
Let $b(G)\in\CC[x]$ be the polynomial such that $x^{v(G)}b(G)$ is the $(1,2)$ entry of the triangular matrix $\Eu_x(G)G$. 
Note that, if $G\in\GL(2,\mcO_{U_0\cap U_1})$, then the $(j,j)$ entry of $\Eu_x(G)G$ is $x^{e_j(G)}$ for each $j=1,2$, and $y^{e_2(G)}b(G)\in\CC[y]$ by the definition of $\Eu_x(G)$. 
For $h,p\in\CC[y]$ ($p\ne 0$), let $\quot_y(h,p)$ be the quotient of the division of $h$ by $p$ in $\CC[y]$, i.e., $\deg_y(h-p\quot_y(h,p))<\deg p$. 
By Algorithm~\ref{algo2} below, we can compute matrices $A_x\in\GL(2,\CC[x])$ and $A_y\in\GL(2,\CC[y])$ such that $A_x^{-1}GA_y$ is a diagonal matrix. 

%
%

\begin{algorithm}[H]
\caption{Computation of direct summands of a $2$-bundle on $\PP^1$}
\label{algo2}
\begin{algorithmic}[1]
\REQUIRE A transition function $G$ of a $2$-bundle $\mcE$ on $\PP^1$. 
\ENSURE Matrices $A_x,A_y$ such that $A_x^{-1}GA_y$ is diagonal, and $e_1,e_2\in\ZZ$ such that $\mcE\cong\mcO_{\PP^1}(e_1)\oplus\mcO_{\PP^1}(e_2)$. 
\STATE $G':=G$; \ $A_x:=E$; \ $A_y:=E$; 
\WHILE{$v_{1}(G')< v_{2}(G')$ or $e_{1}(G')< e_{2}(G')$}
	\IF{$v_{1}(G')< v_{2}(G')$}
	\STATE $A_x:=A_x$; \ $A_y:=A_yJ$; \ $G':=A_x^{-1}GA_y$; 
	\ELSE
	\STATE $s_y:=\quot_y\left(y^{e_{2}(G')}b(G'),\ y^{e_{2}(G')-e_{1}(G')}\right)$; 
	\STATE $A_x:=A_x\Eu_x(G')^{-1}$; \ $A_y:=A_y\begin{pmatrix} 1 & -s_y \\ 0 & 1 \end{pmatrix}$; \ $G':=A_x^{-1}GA_y$; 
	\ENDIF
\ENDWHILE
\STATE $A_x:=A_x\Eu_x(G')^{-1}$; \ $A_y:=A_y\begin{pmatrix} 1 & -y^{e_{1}(G')}b(G') \\ 0 & 1 \end{pmatrix}$; \ $G':=A_x^{-1}GA_y$; 
\STATE $e_1:=v(G')+e_{1}(G')$; $e_2:=v(G')+e_{2}(G')$; 
\RETURN $A_x,A_y$, $e_1,e_2$. 
\end{algorithmic}
\end{algorithm}


\begin{prop}\label{prop:P1_summand}
Let $\mcE$ be a $2$-bundle on $\PP^1$ with a transition function $G\in\GL(2,\mcO_{U_0\cap U_1})$, and let $A_x,A_y,e_1,e_2$ be the result of Algorithm~\ref{algo2} for $G$. 
\begin{enumerate}[label={\rm (\roman{enumi})}]
	\item Algorithm~\ref{algo2} halts.  
	\item $A_x\in\GL(2,\CC[x])$ and $A_y\in\GL(2,\CC[y])$;
	\item $A_x^{-1}GA_y=\begin{pmatrix} x^{e_1} & 0 \\ 0 & x^{e_2} \end{pmatrix}$. 
\end{enumerate}
In particular, $\mcE\cong\mcO_{\PP^1}(e_1)\oplus\mcO_{\PP^1}(e_2)$. 
\end{prop}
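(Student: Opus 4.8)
The plan is to view Algorithm~\ref{algo2} as preserving the loop invariant $G'=A_x^{-1}GA_y$ (each update of $A_x$ or $A_y$ is immediately compensated by recomputing $G'$), and to extract (i)--(iii) from the way $v(G'),v_1(G'),v_2(G'),e_1(G'),e_2(G'),b(G')$ transform under the two branches and the closing step. Throughout I would use the two facts recorded before the statement, namely $e_1(G')=v_1(G')$ and $v_2(G')\le e_2(G')$, together with the observation that for a transition matrix $G'\in\GL(2,\mcO_{U_0\cap U_1})$ the triangularization reads $\Eu_x(G')G'=x^{v(G')}\left(\begin{smallmatrix}x^{e_1(G')}&b(G')\\0&x^{e_2(G')}\end{smallmatrix}\right)$, the diagonal entries being pure powers of $x$ because $a'd'$ in (\ref{eq:euclid_1}) is a scalar multiple of a power of $x$ while $a',d'$ are monic.

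For (ii) I would simply inspect the factors. Every matrix appended to $A_x$ is some $\Eu_x(G')^{-1}$, and by its construction $\Eu_x(G')$ is a product of $J$, lower unipotent shears over $\CC[x]$, and the matrix $B_m$, all lying in $\GL(2,\CC[x])$ with determinant in $\CC^\times$; hence $A_x\in\GL(2,\CC[x])$. Every matrix appended to $A_y$ is either $J$ or an upper unipotent shear with entry in $\CC[y]$ --- for the closing shear one has $y^{e_1(G')}b(G')\in\CC[y]$ since at that moment $e_1(G')\ge e_2(G')$ while $y^{e_2(G')}b(G')\in\CC[y]$ --- so $A_y\in\GL(2,\CC[y])$.

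For (iii), granting termination, I would first note that $e_1(G')=v_1(G')$ and $v_2(G')\le e_2(G')$ force the loop condition to be equivalent to $e_1(G')<e_2(G')$, so at exit $e_1(G')\ge e_2(G')$. The closing step left-multiplies $G'$ by $\Eu_x(G')$ to produce the triangular form above and then right-multiplies by the shear with top entry $-y^{e_1(G')}b(G')$; its new $(1,2)$ entry is
\[
x^{v(G')}b(G')-x^{v(G')+e_1(G')}\,y^{e_1(G')}b(G')=0,
\]
using $y=x^{-1}$ on $U_0\cap U_1$. Thus $G'$ becomes $\mathrm{diag}\!\left(x^{v(G')+e_1(G')},x^{v(G')+e_2(G')}\right)=\mathrm{diag}(x^{e_1},x^{e_2})$ for the returned $e_1,e_2$, which is (iii); since $A_x$ and $A_y$ trivialize $\mcE$ over $U_0$ and $U_1$, the diagonal monomial transition matrix identifies $\mcE$ with $\mcO_{\PP^1}(e_1)\oplus\mcO_{\PP^1}(e_2)$.

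The main obstacle is termination (i), for which I would exhibit a monovariant built from the conserved quantity $2v(G')+e_1(G')+e_2(G')=v_{P_x}(\det G)$; this is invariant because $\det A_x,\det A_y\in\CC^\times$, so every operation multiplies $\det G'$ by a unit. As $e_1(G'),e_2(G')\ge0$, it bounds $v(G')\le\tfrac12 v_{P_x}(\det G)$ from above. I would then track the transforms to show that $v(G')$ never decreases and strictly increases at least once in every two iterations: a column swap (the $v_1<v_2$ branch) keeps $v(G')$ fixed but raises $e_1(G')$ from $v_1(G')$ to the strictly larger $v_2(G')$, while a reduction raises $v(G')$ by $e_1(G')$ and leaves the matrix in a reduced form with $v_1<v_2$, so that reductions and swaps strictly alternate. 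Hence every reduction after the first is preceded by a swap and has $e_1(G')=v_2^{\mathrm{old}}>0$, so it strictly increases $v(G')$; since $v(G')$ is integer-valued and bounded above, the loop halts. I expect the delicate point to be exactly this bookkeeping --- verifying how the reduced form $x^{v(G')}\left(\begin{smallmatrix}x^{e_1}&b(G')\\0&x^{e_2}\end{smallmatrix}\right)$ and its invariants change under the shear of the reduction branch and the swap of the other branch --- rather than any conceptual difficulty.
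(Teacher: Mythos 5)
Your proposal is correct and follows essentially the same route as the paper's proof: (ii) by inspecting that every factor appended to $A_x$ lies in $\GL(2,\CC[x])$ and every factor appended to $A_y$ in $\GL(2,\CC[y])$, (iii) by the closing computation $x^{v(G')}b(G')-x^{v(G')+e_1(G')}y^{e_1(G')}b(G')=0$, and (i) by the same monovariant: $v(G')$ is non-decreasing, strictly increases on each reduction following a swap (since there $e_1(G')=v_2^{\mathrm{old}}\geq 1$), and is bounded above via $2v(G')\leq v_{P_x}(\det G)$. Your alternation bookkeeping is just a repackaging of the paper's observation that a reduction leaves $0=v_1(G^{(i+1)})<v_2(G^{(i+1)})$, forcing a swap and then a $v$-increasing reduction, so that $v(G^{(i)})<v(G^{(i+3)})<\cdots$ yields the same contradiction.
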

\begin{proof}
Put $A_x^{(0)}=A_y^{(0)}:=E$ and $G^{(0)}:=G$. 
Let $A_x^{(i)}, A_y^{(i)}$ and $G^{(i)}$ be the matrices $A_x,A_y$ and $G'$ in the $i$th step of Algorithm~\ref{algo2}. 
Put $e_j^{(i)}:=e_j(G^{(i)})$ for $j=1,2$. 
It is clear that $A_x^{(i)}\in\GL(2,\CC[x])$ and $A_y^{(i)}\in\GL(2,\CC[y])$ for each $i$. 
Hence (ii) holds true. 
If  $v_{1}(G^{(i)})\geq v_{2}(G^{(i)})$ and $e_1^{(i)}\geq e_2^{(i)}$, then we have 
\begin{align*}
	G^{(i+1)}
	&=
	x^{v(G^{(i)})}\begin{pmatrix} x^{e_1^{(i)}} & b(G^{(i)}) \\ 0 & x^{e_2^{(i)}} \end{pmatrix}\begin{pmatrix} 1 & -y^{e_1^{(i)}}b(G^{(i)}) \\ 0 & 1 \end{pmatrix}
	\\&=
	x^{v(G^{(i)})}\begin{pmatrix} x^{e_1^{(i)}} & 0 \\ 0 & x^{e_2^{(i)}} \end{pmatrix}. 
\end{align*}
On the other hand, $e_1:=v(G^{(i)})+e_1^{(i)}$ and $e_2:=v(G^{(i)})+e_2^{(i)}$. 
Thus (iii) holds true if (i) is also true. 

Suppose that $v_{1}(G^{(i)})\geq v_{2}(G^{(i)})$ and $e_1^{(i)}<e_2^{(i)}$. 
Then we obtain
\begin{align*} 
G^{(i+1)}
&=
x^{v(G^{(i)})}\begin{pmatrix} x^{e_1^{(i)}} & b(G^{(i)})-x^{e_1^{(i)}}s_y \\ 0 & x^{e_2^{(i)}} \end{pmatrix}
\\&=
x^{v(G^{(i)})+e_1^{(i)}}\begin{pmatrix} 1 & x^{e_2^{(i)}-e_1^{(i)}}\left(y^{e_2^{(i)}}b(G^{(i)})-y^{e_2^{(i)}-e_1^{(i)}}s_y\right) \\ 0 & x^{e_2^{(i)}-e_1^{(i)}} \end{pmatrix}.
 \end{align*}
By the definition of $s_y$, we have $x^{e_2^{(i)}-e_1^{(i)}}(y^{e_2^{(i)}-e_1^{(i)}}s_y+y^{e_2^{(i)}}b(G^{(i)}))\in\CC[x]$. 
Hence $v(G^{(i+1)})=v(G^{(i)})+e_1^{(i)}$ and $0=v_{1}(G^{(i+1)})<v_{2}(G^{(i+1)})$. 
Then we obtain $v(G^{(i+2)})=v(G^{(i)})+e_1^{(i)}$ and $e_1^{(i+2)}=v_{1}(G^{(i+2)})>v_{2}(G^{(i+2)})= 0$. 
If $e_1^{(i+2)}<e_2^{(i+2)}$, then $v(G^{(i+3)})=v(G^{(i)})+e_1^{(i)}+e_1^{(i+2)}>v(G^{(i)})$ by the above argument. 
Thus if the while loop in Algorithm~\ref{algo2} does not halt, then we obtain a sequence of integers 
\[ v(G^{(i)})<v(G^{(i+3)})<\dots<v(G^{(i+3n)})<\cdots, \]
which is a contradiction to $2v(G^{(i)})\leq v_{P_x}(\det(G))$ for any $i$. 
Therefore (i) holds.
\end{proof}

\section{Example}\label{sec:example}

In this section, we prove the result \cite[Proposition~8]{schwarzenberger1961} about jumping lines of the push-forward of a line bundle on a non-singular double cover by using our method. 
Our method is somewhat complicated. However it may be extended for non-singular double covers with branch curves of higher degree. 
In the last of this section, we compute global sections of a line bundle on the non-singular double cover $X$. 
Lemmas~\ref{lem:rest_push}, \ref{lem:str_sh} and \ref{lem:p1} below are useful for our aim.  

\begin{lem}\label{lem:rest_push}
Let $\phi:X\to Y$ be a flat cover with $X$ and $Y$ smooth, and let $Y'\subset Y$ be subvariety with the inclusion $i:Y'\to Y$. 
Let $X'$ be the fiber product $X\times_YY'$, and let $\phi':X'\to Y'$ and $j:X'\to X$ be the projections. 
Then $i^\ast(\phi_\ast\mcL)\cong\phi'_\ast(j^\ast\mcL)$ for a line bundle $\mcL$ on $X$. 
\end{lem}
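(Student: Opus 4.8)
The plan is to reduce the statement to a purely local, module-theoretic identity by exploiting that a cover is finite, hence affine. Since $\phi$ is finite it is affine, so $\phi_\ast$ is exact on quasi-coherent sheaves and the assertion is local on $Y$; I may therefore replace $Y$ by an affine open $U=\mathrm{Spec}\,A$ over which $X=\mathrm{Spec}\,B$ with $B$ a finite (flat) $A$-algebra, and over which the line bundle $\mcL$ corresponds to a $B$-module $M$ that is locally free of rank one. There is, for any quasi-coherent $\mcL$, a canonical base-change morphism $\theta:i^\ast(\phi_\ast\mcL)\to\phi'_\ast(j^\ast\mcL)$ attached to the Cartesian square in the statement, and it suffices to prove that $\theta$ is an isomorphism after this localization. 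Note that $j^\ast\mcL$ is automatically a line bundle on $X'$, since the pullback of an invertible sheaf is invertible.

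Shrinking $U$ if necessary, I may assume $i^{-1}(U)=\mathrm{Spec}\,A'$ for some $A$-algebra $A'$. Then the fibre product gives $j^{-1}(U)=\mathrm{Spec}(B\otimes_A A')$, the morphism $\phi'$ corresponds to the ring map $A'\to B\otimes_A A'$, and $j^\ast\mcL$ corresponds to the $(B\otimes_A A')$-module $M\otimes_B(B\otimes_A A')$. I would then compute both sides as $A'$-modules: the left-hand side $i^\ast(\phi_\ast\mcL)$ is $M\otimes_A A'$, while the right-hand side $\phi'_\ast(j^\ast\mcL)$ is $M\otimes_B(B\otimes_A A')$ viewed by restriction of scalars as an $A'$-module. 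Associativity of the tensor product, together with $M\otimes_B B\cong M$, furnishes a canonical isomorphism
\[
M\otimes_B(B\otimes_A A')\cong M\otimes_A A'
\]
of $A'$-modules, valid for any $A$-algebra $A'$ with no flatness hypothesis; and one checks directly that this is precisely the map induced by $\theta$.

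Finally I would observe that, because the displayed isomorphism is the canonical base-change map, it is independent of the chosen affine chart and compatible on overlaps $U\cap U''$, so the local isomorphisms glue to the desired global isomorphism $i^\ast(\phi_\ast\mcL)\cong\phi'_\ast(j^\ast\mcL)$. The nearest thing to an obstacle is exactly this naturality point: one must work throughout with the canonical morphism $\theta$ rather than produce unrelated isomorphisms chart by chart, since otherwise the gluing could fail. It is worth remarking that the base-change isomorphism itself uses only that $\phi$ is affine; the hypotheses that $\phi$ is flat and that $X,Y$ are smooth are not needed for the conclusion as stated, but serve to keep $X'$ and $j^\ast\mcL$ well behaved in the situations where the lemma will be applied.
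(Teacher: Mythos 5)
Your proposal is correct, and at its core it is the same argument as the paper's: use finiteness of $\phi$ to reduce to an affine picture and verify the base-change identity at the level of modules. The differences are worth recording. The paper takes $Y'$ closed, writes $X=\mathrm{Spec}\,\gtB$, $Y=\mathrm{Spec}\,\gtA$, $Y'=\mathrm{Spec}\,\gtA/\gtI$, trivializes $\mcL$ so that it becomes $\gtB^\sim$ (implicitly shrinking so that $\mcL$ is trivial on $\phi^{-1}(U)$, which is possible because $\phi$ is finite and line bundles over semilocal rings are trivial --- a point the paper leaves unstated), and then invokes flatness of $\phi$ to get the exact sequence $0\to\gtB\otimes_\gtA\gtI\to\gtB\to\gtB\otimes_\gtA\gtA/\gtI\to0$, concluding $i^\ast(\phi_\ast\mcL)\cong(\gtB/\gtI\gtB)^\sim\cong\phi'_\ast(j^\ast\mcL)$. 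You instead keep the module $M$ representing $\mcL$ arbitrary and use associativity of tensor, $M\otimes_B(B\otimes_A A')\cong M\otimes_A A'$, which sidesteps the trivialization step entirely; and you route everything through the canonical base-change morphism $\theta$, which cleanly handles the gluing that the paper compresses into ``the question is local.'' Your closing remark that flatness is superfluous is also accurate and sharper than the paper's proof: the isomorphism $\gtB\otimes_\gtA\gtA/\gtI\cong\gtB/\gtI\gtB$ follows from right-exactness of the tensor product alone, whereas flatness only adds injectivity of $\gtB\otimes_\gtA\gtI\to\gtB$, which the conclusion never uses; in the paper flatness and smoothness are hypotheses inherited from the intended application (restricting $\phi_\ast\mcL$ to lines), not logical necessities. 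In short: same skeleton, but your version is slightly more general and fills in two small points (trivialization and naturality of the local isomorphisms) that the paper's proof glosses over.
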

\begin{proof}
The question is local on $Y$ and $Y'$. 
Since $\phi$ is finite, we may assume that $X=\mathrm{Spec}\,\gtB$, $Y=\mathrm{Spec}\,\gtA$ and $Y'=\mathrm{Spec}\,\gtA/\gtI$, where $\gtA,\gtB$ are rings and $\gtI\subset \gtA$ is an ideal.  
Then $\mcL$ is the sheafification $\gtB^\sim$ of $\gtB$. 
Since $\phi$ is flat, we have the exact sequence
\begin{align*}
	0\to \gtB\otimes_\gtA \gtI \to \gtB \to \gtB\otimes_\gtA \gtA/\gtI \to 0.
\end{align*}
Hence we have $i^\ast(\phi_\ast\mcL)=(\gtB\otimes_\gtA \gtA/\gtI)^\sim\cong(\gtB/\gtI\gtB)^\sim=\phi'_\ast(j^\ast\mcL)$. 
\end{proof}

\begin{lem}\label{lem:str_sh}
	Let $\phi:X\to\PP^n$ be a non-singular double cover branched at a smooth hypersurface of degree $2l$ on $\PP^n$, and let $\mcL$ be a line bundle on $X$. 
	Then $\phi_\ast\mcL\cong\mcO_{\PP^n}\oplus\mcO_{\PP^n}(-l)$ if and only if $\mcL\cong\mcO_X$. 
\end{lem}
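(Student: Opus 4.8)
The plan is to deduce the statement from the bijection $\Upsilon$ of Proposition~\ref{prop:corr}. The ``if'' direction is immediate: if $\mcL\cong\mcO_X$ then $\phi_\ast\mcL\cong\phi_\ast\mcO_X\cong\mcO_{\PP^n}\oplus\mcO_{\PP^n}(-L)$, and since the branch locus has degree $2l$ we have $2L\sim 2lH$, whence $\mcO_{\PP^n}(-L)\cong\mcO_{\PP^n}(-l)$ and the required splitting holds. For the ``only if'' direction I would translate the hypothesis into the language of admissible pairs: by Proposition~\ref{prop:corr} it suffices to show that $\Upsilon([\mcL])=[(\phi_\ast\mcL,M_\mcL)]$ coincides with $\Upsilon([\mcO_X])$, which by the discussion at the start of Section~\ref{sec:group_structure} is represented by $(\mcO_Y\oplus\mcO_Y(-L),M^{(0)})$ with $M^{(0)}=\left(\begin{smallmatrix}0&F\\1&0\end{smallmatrix}\right)$.

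So, assuming $\phi_\ast\mcL\cong\mcO_{\PP^n}\oplus\mcO_{\PP^n}(-l)\cong\mcO_Y\oplus\mcO_Y(-L)=:\mcM$, I would first describe the morphism $M:=M_\mcL$. Since $\mcM$ splits, $M$ has the shape (\ref{eq:mor_split}) with $D_1=0$ and $D_2=-L$, so it is determined by global sections $a_0\in\HH^0(\mcO_{\PP^n}(l))$, $a_1\in\HH^0(\mcO_{\PP^n})=\CC$ and $a_2\in\HH^0(\mcO_{\PP^n}(2l))$ subject to $a_0^2+a_1a_2=F$; in particular $a_1$ is a \emph{constant}, using $\HH^0(\mcO_{\PP^n})=\CC$. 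Thus the whole datum is encoded by the scalar $a_1$ together with the forms $a_0,a_2$.

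The crux is to show $a_1\neq0$. If $a_1=0$, the constraint forces $F=a_0^2$; but then the branch divisor $B_\phi=\{F=0\}$ would be non-reduced (it is twice $\{a_0=0\}$, which is nonempty as $l\geq1$), contradicting its smoothness. Granting $a_1\in\CC^\times$, I would exhibit the explicit isomorphism $\Psi:=\left(\begin{smallmatrix}a_1&-a_0\\0&1\end{smallmatrix}\right)$ of $\mcM$: every endomorphism of $\mcM=\mcO_Y\oplus\mcO_Y(-L)$ is automatically upper triangular since $\HH^0(\mcO_{\PP^n}(-l))=0$ for $l\geq1$, its determinant $a_1$ is a unit, and a direct $2\times2$ computation using $a_0^2+a_1a_2=F$ gives $\Psi\circ M=M^{(0)}\circ\Psi(-L)$. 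Hence $(\mcM,M)\sim(\mcM,M^{(0)})$, so $\Upsilon([\mcL])=\Upsilon([\mcO_X])$, and the injectivity of $\Upsilon$ yields $\mcL\cong\mcO_X$. The only real subtlety is the nonvanishing of $a_1$, which is precisely where the smoothness (hence reducedness) of the branch hypersurface is used; once that is secured the equivalence is a mechanical verification.
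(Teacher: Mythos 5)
Your proof is correct and follows essentially the same route as the paper's: both translate the hypothesis into an admissible pair $(\mcM,M)$ with $M=\left(\begin{smallmatrix}a_0&a_2\\a_1&-a_0\end{smallmatrix}\right)$, observe $a_1\in\CC^\times$, and conjugate $M$ to the normal form $\left(\begin{smallmatrix}0&F\\1&0\end{smallmatrix}\right)$ by an explicit automorphism before invoking the injectivity of $\Upsilon$ (your $\Psi$ is just $a_1A^{-1}$ for the paper's matrix $A=\left(\begin{smallmatrix}1&a_0\\0&a_1\end{smallmatrix}\right)$). In fact you spell out the one point the paper leaves implicit, namely that $a_1=0$ would force $F=a_0^2$ and hence a non-reduced branch divisor, contradicting smoothness.
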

\begin{proof}
Put $\mcM:=\mcO_{\PP^n}\oplus\mcO_{\PP^n}(-l)$. 
Suppose that $\phi_\ast\mcL\cong\mcM$. 
Let $M:\mcM(-l)\to\mcM$ be a morphism such that $(\mcM,M)$ is an admissible pair for $\phi$ and $\mcL\cong\mcL_{(\mcM,M)}$. 
Then $M$ can be represented by a matrix 
\begin{align*}
	M=
	\begin{pmatrix}
		a_0 & a_2 \\ a_1 & -a_0
	\end{pmatrix}:\mcO_{\PP^n}(-l)\oplus\mcO_{\PP^n}(-2l)\to\mcO_{\PP^n}\oplus\mcO_{\PP^n}(-l),
\end{align*}
where $a_0$, $a_1$ and $a_2$ are global sections of $\mcO_{\PP^n}(l)$, $\mcO_{\PP^n}$ and $\mcO_{\PP^n}(2l)$, respectively. 
In particular, $F:=a_0^2+a_1a_2$ defines the branch locus, and $a_1\in\CC^\times$. 
Let $A:\mcM\to\mcM$ be the isomorphism represented as
\begin{align*}
	A=
	\begin{pmatrix}
		1 & a_0 \\ 0 & a_1
	\end{pmatrix},
	\qquad \mbox{then} \qquad 
	A^{-1}MA=
	\begin{pmatrix}
		0 & F \\ 1 & 0
	\end{pmatrix}. 
\end{align*}
Hence $\mcL\cong\mcO_X$ by Proposition~\ref{prop:corr}. 
It is known that $\phi_\ast\mcO_X\cong\mcO_{\PP^n}\oplus\mcO_{\PP^n}(-l)$, and the assertion has proved. 
\end{proof}

\begin{lem}\label{lem:p1}
Let $\phi:X\to\PP^1$ be a non-singular double cover of $\PP^1$ branched at two points $P_0,P_1\in\PP^1$. 
Note that $X\cong\PP^1$ and $\phi_\ast\mcO_X\cong\mcO_{\PP^1}\oplus\mcO_{\PP^1}(-1)$. 
Then $\phi_\ast\mcO_X(2k+1)\cong\mcO_{\PP^1}(k)^{\oplus 2}$ and $\phi_\ast\mcO_X(2k)\cong\mcO_{\PP^1}(k)\oplus\mcO_{\PP^1}(k-1)$ for $k\in\ZZ$. 
\end{lem}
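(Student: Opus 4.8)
The plan is to reduce the computation of all $\phi_\ast\mcO_X(m)$ to the two residue classes $m=0$ and $m=1$ modulo $2$ by a twisting argument, and then to pin down the odd class by a single cohomological computation. Since $X\cong\PP^1$, I write $\mcO_X(m)$ for the unique line bundle of degree $m$ on $X$. Because $\phi$ has degree $2$, the pullback $\phi^\ast\mcO_{\PP^1}(1)$ has degree $2$ on $X$, hence $\phi^\ast\mcO_{\PP^1}(1)\cong\mcO_X(2)$. The projection formula then gives
$$\phi_\ast\mcO_X(m+2j)\cong\phi_\ast\big(\mcO_X(m)\otimes\phi^\ast\mcO_{\PP^1}(j)\big)\cong\phi_\ast\mcO_X(m)\otimes\mcO_{\PP^1}(j)$$
for all $m,j\in\ZZ$, so it suffices to treat $m\in\{0,1\}$.

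The even case is then immediate: taking $m=0$ and $j=k$ above, and using the isomorphism $\phi_\ast\mcO_X\cong\mcO_{\PP^1}\oplus\mcO_{\PP^1}(-1)$ recorded in the statement, I obtain $\phi_\ast\mcO_X(2k)\cong\mcO_{\PP^1}(k)\oplus\mcO_{\PP^1}(k-1)$.

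For the odd case I must compute $\phi_\ast\mcO_X(1)$. Since every $2$-bundle on $\PP^1$ splits (Grothendieck; see Section~\ref{sec:P1}), write $\phi_\ast\mcO_X(1)\cong\mcO_{\PP^1}(a)\oplus\mcO_{\PP^1}(b)$ with $a\geq b$. As $\phi$ is finite, $R^q\phi_\ast=0$ for $q>0$, so $\chi(\PP^1,\phi_\ast\mcO_X(1))=\chi(X,\mcO_X(1))=2$; Riemann--Roch on $\PP^1$ gives $a+b+2=2$, i.e.\ $b=-a$ with $a\geq0$. Here $h^0$ of $\mcO_X(1)$ itself is not enough, since both $a=0$ and $a=1$ give $h^0=2$. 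The key step is to twist down instead: by the projection formula $\phi_\ast\mcO_X(-1)\cong\mcO_{\PP^1}(a-1)\oplus\mcO_{\PP^1}(-a-1)$, whose space of global sections has dimension $h^0(X,\mcO_X(-1))=0$. A direct count shows that the dimension of the right-hand side equals $a$ for every $a\geq0$, forcing $a=0$. Hence $\phi_\ast\mcO_X(1)\cong\mcO_{\PP^1}^{\oplus2}$, and twisting by $\mcO_{\PP^1}(k)$ yields $\phi_\ast\mcO_X(2k+1)\cong\mcO_{\PP^1}(k)^{\oplus2}$.

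I expect the only real obstacle to be this last point: distinguishing the balanced splitting $\mcO_{\PP^1}^{\oplus2}$ from the unbalanced $\mcO_{\PP^1}(1)\oplus\mcO_{\PP^1}(-1)$, which share the same Euler characteristic and the same number of sections for $\mcO_X(1)$. Passing to the negative twist $\mcO_X(-1)$, where the two candidates yield different section counts, is what resolves the ambiguity; everything else is formal manipulation with the projection formula and Riemann--Roch.
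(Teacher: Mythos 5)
Your proof is correct, and it takes a genuinely different route from the paper's. The paper proves this lemma as a worked illustration of its transition-function machinery: it realizes $\mcO_X(1)$ as the line bundle $\mcL_{(\mcM,M)}$ associated to the explicit admissible pair with $\mcM=\mcO_{\PP^1}^{\oplus2}$ and $M=\begin{pmatrix} 0 & x_1 \\ x_0 & 0 \end{pmatrix}$ (identified via $\dim\HH^0(X,\mcL)=2$, which for a \emph{line} bundle on $\PP^1$ does pin down the degree), passes to a good representation, and then invokes Theorem~\ref{thm:group_law} to write down the transition matrix $G_{01}^{[n]}$ of $\phi_\ast\mcL^{n}$ for every $n\in\ZZ$; these come out diagonal or antidiagonal up to a power of $x_{01}$, so all splitting types, for positive and negative $n$ alike, are read off at once. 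You instead argue abstractly: the projection formula together with $\phi^\ast\mcO_{\PP^1}(1)\cong\mcO_X(2)$ reduces everything to $m\in\{0,1\}$; the even case is the given decomposition of $\phi_\ast\mcO_X$; and in the odd case you combine Grothendieck splitting with $\chi$-invariance under finite push-forward to get $\phi_\ast\mcO_X(1)\cong\mcO_{\PP^1}(a)\oplus\mcO_{\PP^1}(-a)$, and then — correctly observing that $h^0$ of $\mcO_X(1)$ itself cannot separate $a=0$ from $a=1$ — you use the section count of the negative twist, $h^0\big(\mcO_{\PP^1}(a-1)\oplus\mcO_{\PP^1}(-a-1)\big)=a$ against $h^0(X,\mcO_X(-1))=0$, forcing $a=0$. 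Every step checks out. What your argument buys is brevity and independence from the paper's apparatus (admissible pairs, good representations, the matrices $K_{ij}^{\pm}$); what the paper's computation buys is the explicit transition matrices themselves and a demonstration of Theorem~\ref{thm:group_law}, which is the stated purpose of the section. Since the paper's subsequent application of this lemma in Proposition~\ref{prop:schw} uses only the splitting types via Lemma~\ref{lem:rest_push}, your proof would serve that purpose equally well.
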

\begin{proof}
We may assume that $P_i\in\PP^1$ is defined by $x_i=0$ for each $i=0,1$, where $x_0,x_1$ are homogeneous coordinates of $\PP^1$. 
Let $U_i$ be the affine open set $\PP^1\setminus\{P_i\}$, and put $x_{ij}:=x_j/x_i$ for $i,j=0,1$. 
Put $\mcM:=\mcO_{\PP^1}^{\oplus2}$, and let $G_{ij}$ be the identity matrix for $i,j=0,1$ as the gluing map $\mcO_{U_j}^{\oplus2}\to\mcO_{U_i}^{\oplus2}$ of $\mcM$. Let $M:\mcM(-1)\to\mcM$ be the morphism represented by 
\begin{align*}
	M=
	\begin{pmatrix}
		0 & x_1 \\ x_0 & 0
	\end{pmatrix}
	:\mcO_{\PP^1}(-1)\oplus\mcO_{\PP^1}(-1)\to\mcO_{\PP^1}\oplus\mcO_{\PP^1}. 
\end{align*}
Then $(\mcM,M)$ is an admissible pair for $\phi$. 
Put $\mcL:=\mcL_{(\mcM,M)}$. 
Since $\dim \HH^0(X,\mcL)=2$ and $X\cong\PP^1$, we have $\mcL\cong\mcO_X(1)$. 
Let $M_i:=M|_{U_i}$ for $i=0,1$, and put $M_i^\natu:=A_i^{-1}M_iA_i$ and $G_{ij}^\natu:=A_i^{-1}G_{ij}A_j$, where 
\begin{align*}
	A_0&:=
	\begin{pmatrix}
		1 & 0 \\ 0 & 1
	\end{pmatrix},
	&
	A_1&:=
	\begin{pmatrix}
		0 & 1 \\ 1 & 0
	\end{pmatrix}
\end{align*}
Then $(\{G_{ij}^\natu\},\{M_i^\natu\})$ is a good representation of $(\mcM,M)$. 
Let $K_{ij}^+$ and ${K}_{ij}^-$ be the matrices $K_{ij}^{(k)+}$ and ${K}_{ij}^{(k)-}$ in (\ref{eq:Kij^k}) and (\ref{eq:-Kij^k}) for $(\{G_{ij}^\natu\},\{M_i^\natu\})$, respectively:
\begin{align*}
	K_{01}^+&=
	\begin{pmatrix}
		0 & x_{01} \\ 1 & 0
	\end{pmatrix},
	&
	{K}_{01}^-&=-\frac{1}{x_{01}}
	\begin{pmatrix}
		0 & x_{01} \\ 1 & 0
	\end{pmatrix}.
\end{align*}
By Theorem~\ref{thm:group_law}, the transition function $G_{01}^{[n]}$ of $\phi_\ast\mcL^{n}$ is given by
\begin{align*}
	G_{01}^{[n]}=\left\{
	\begin{array}{ll}
	(K_{01}^+)^nG_{01}^{(0)}=x_{01}^k
	\begin{pmatrix}
		0 & 1 \\ 1 & 0
	\end{pmatrix} & \mbox{(if $n=2k+1$ for $k\geq0$)}
	\\[1.em]
	(K_{01}^+)^nG_{01}^{(0)}=x_{01}^{k-1}
	\begin{pmatrix}
		x_{01} & 0 \\ 0 & 1
	\end{pmatrix} & \mbox{(if $n=2k$ for $k\geq0$)}
	\\[1.em]
	({K}_{01}^-)^{-n}G_{01}^{(0)}=(-1)^n x_{01}^{k}
	\begin{pmatrix}
		0 & 1 \\ 1 & 0
	\end{pmatrix} & \mbox{(if $n=2k+1$ for $k<0$)}
	\\[1.em]
	({K}_{01}^-)^{-n}G_{01}^{(0)}=(-1)^n x_{01}^{k-1}
	\begin{pmatrix}
		x_{01} & 0 \\ 0 & 1
	\end{pmatrix} & \mbox{(if $n=2k$ for $k<0$)}
	\end{array}
	\right.
\end{align*}
This implies that $\phi_\ast\mcL^{2k+1}\cong\mcO_{\PP^1}(k)^{\oplus2}$ and $\phi_\ast\mcL^{2k}\cong\mcO_{\PP^1}(k)\oplus\mcO_{\PP^1}(k-1)$. 
\end{proof}

From now, let $\phi:X\to\PP^2$ be a double cover branched along smooth conic $B_\phi$, and let $\iota:X\to X$ be the covering transformation of $\phi$. 
Then $X\cong\PP^1\times\PP^1$ and $\pic(X)\cong\ZZ^{\oplus 2}$. 
Hence a line bundle $\mcL$ on $X$ can be represented by bidegree as $\mcL=\mcO_X(k_1,k_2)$. 
If $k_1\leq k_2$, then $\phi_\ast\mcO_X(k_1,k_2)\cong\phi_\ast\mcO_X(0,k_2-k_1)\otimes\mcO_{\PP^2}(k_1)$. 
Thus it is enough to compute the direct summands of $\phi_\ast\mcO_{X}(0,k)|_L$ for $k\geq0$ and lines $L\subset\PP^2$.

\begin{prop}[{\cite[Proposition 8]{schwarzenberger1961}}]\label{prop:schw}
Let $\phi:X\to\PP^2$ be as above. 
\begin{enumerate}[label={\rm (\roman{enumi})}]
	\item If a line $L\subset\PP^2$ intersects transversally with $B_\phi$, then $\phi_\ast\mcO_X(0,2k+1)|_L\cong\mcO_L(k)^{\oplus2}$ and $\phi_\ast\mcO_X(0,2k)|_L\cong\mcO_{L}(k)\oplus\mcO_L(k-1)$ for $k\geq0$. 
	\item If $L\subset\PP^2$ is a tangent line of $B_\phi$, then $\phi_\ast\mcO_X(0,n)|_L\cong\mcO_L(n-1)\oplus\mcO_L$ for $n\geq0$. 
\end{enumerate}
In particular, tangent lines of $B_\phi$ are jumping lines of $\phi_\ast\mcO_X(k_1,k_2)$ if the integers $k_1$ and $k_2$ satisfy $|k_1-k_2|> 2$. 
\end{prop}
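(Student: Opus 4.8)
The plan is to push every computation down to a single line by means of the base-change Lemma~\ref{lem:rest_push}. For a line $L\subset\PP^2$ I would form the fibre product $X':=\phi^{-1}(L)=X\times_{\PP^2}L$ with projections $\phi':X'\to L$ and $j:X'\hookrightarrow X$; since $\phi$ is finite between smooth varieties it is flat, so Lemma~\ref{lem:rest_push} gives $\phi_\ast\mcO_X(0,k)|_L\cong\phi'_\ast(j^\ast\mcO_X(0,k))$. Because $\phi^\ast\mcO_{\PP^2}(1)\cong\mcO_X(1,1)$ (compare self-intersections), the fibre $X'$ belongs to $|\mcO_X(1,1)|$, and its geometry records how $L$ meets $B_\phi$: a transverse line gives a smooth $(1,1)$-curve, while a tangent line pulls back to $D^+ + \iota^\ast D^+$, two rulings from opposite families (recall $\iota$ swaps the rulings) meeting at the single point $q$ lying over the tangency point $p$. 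I would then treat the two cases separately.

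For transverse $L$ the fibre $X'$ is a smooth curve of genus $0$, hence $X'\cong\PP^1$, and $\phi'$ is a double cover of $L$ branched at the two points of $L\cap B_\phi$. Computing $\mcO_X(0,k)\cdot\mcO_X(1,1)=k$, I get $j^\ast\mcO_X(0,k)\cong\mcO_{\PP^1}(k)$ on $X'$, and Lemma~\ref{lem:p1} immediately yields $\phi'_\ast\mcO_{X'}(2k+1)\cong\mcO_L(k)^{\oplus2}$ and $\phi'_\ast\mcO_{X'}(2k)\cong\mcO_L(k)\oplus\mcO_L(k-1)$, which is (i).

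For a tangent line the fibre $X'=D^+\cup\iota^\ast D^+$ is nodal, and this is where the real work lies. Taking $D^+\in|\mcO_X(1,0)|$ and $\iota^\ast D^+\in|\mcO_X(0,1)|$, the bundle $\mcG:=j^\ast\mcO_X(0,k)$ has degree $k$ on $D^+$ and degree $0$ on $\iota^\ast D^+$, and each component maps isomorphically to $L$. I would tensor the conductor sequence $0\to\mcO_{X'}\to\mcO_{D^+}\oplus\mcO_{\iota^\ast D^+}\to\CC_q\to0$ with the line bundle $\mcG$ and push forward by the finite map $\phi'$ to obtain
\[ 0\to\phi'_\ast\mcG\to\mcO_L(k)\oplus\mcO_L\to\CC_p\to0, \]
so $\phi'_\ast\mcG$ is a rank-two bundle of degree $k-1$ on $L\cong\PP^1$. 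To pin down the splitting type I would exhibit the subsheaf $\{(s,0):s(p)=0\}\cong\mcO_L(k-1)$; projecting onto the second factor shows its quotient is $\mcO_L$, so this subsheaf is saturated and fits in $0\to\mcO_L(k-1)\to\phi'_\ast\mcG\to\mcO_L\to0$. Since $\HH^1(L,\mcO_L(k-1))=0$ for $k\geq0$, the sequence splits and $\phi'_\ast\mcG\cong\mcO_L(k-1)\oplus\mcO_L$, giving (ii). The main obstacle is precisely this step: for the reducible fibre the degree of the push-forward is automatic, but determining the splitting type requires producing a saturated sub-line-bundle of exactly the right degree.

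For the final clause I would reduce a general $\mcO_X(k_1,k_2)$ to the computed cases. Using the projection formula together with $\phi^\ast\mcO_{\PP^2}(1)\cong\mcO_X(1,1)$, one has $\phi_\ast\mcO_X(k_1,k_2)\cong\phi_\ast\mcO_X(0,|k_1-k_2|)\otimes\mcO_{\PP^2}(\min(k_1,k_2))$, and restricting to any line only twists both summands by $\mcO_L(\min(k_1,k_2))$, leaving the gap between the two degrees unchanged. By (i) a transverse line has gap at most $1$, and such lines form a dense open subset of $\Gr_1(2)$; by (ii) a tangent line has gap $|k_1-k_2|-1$. Hence if $|k_1-k_2|>2$ the tangent lines have gap $\geq2$, so they cannot lie in the generic open set $U_{\phi_\ast\mcO_X(k_1,k_2)}$ and are therefore jumping lines.
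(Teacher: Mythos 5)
Your proof is correct, and while part (i) coincides with the paper's argument (base change via Lemma~\ref{lem:rest_push}, the degree count $j^\ast\mcO_X(0,k)\cong\mcO_{\PP^1}(k)$ on the smooth $(1,1)$-fibre, then Lemma~\ref{lem:p1}), your part (ii) takes a genuinely different route. The paper stays inside its own transition-function machinery: it produces a good representation of the admissible pair, computes $(K_{12}^nG_{12}^{(0)})|_L$ in closed form via Theorem~\ref{thm:group_law}, exhibits explicit matrices $Q_1\in\GL(2,\CC[x_{12}])$, $Q_2\in\GL(2,\CC[x_{21}])$ diagonalizing the restricted transition function, and handles the two tangent lines $\{x_1=0\}$, $\{x_2=0\}$ missed by its chart by a separate projective transformation. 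You instead work on the scheme-theoretic fibre $X'=D^+\cup\iota^\ast D^+$ (correctly identified: a singular member of $|\mcO_X(1,1)|$ is a union of rulings from opposite families, swapped by $\iota$, meeting at the single point over the tangency), tensor the conductor sequence $0\to\mcO_{X'}\to\mcO_{D^+}\oplus\mcO_{\iota^\ast D^+}\to\CC_q\to0$ with $\mcG$, use exactness of push-forward along the finite $\phi'$, and split the resulting extension $0\to\mcO_L(k-1)\to\phi'_\ast\mcG\to\mcO_L\to0$ by $\HH^1\big(L,\mcO_L(k-1)\big)=0$; your identification of the saturated subsheaf $\{(s,0)\mid s(p)=0\}$ is exactly the crux and is carried out correctly. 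Your argument is shorter, coordinate-free, and uniform over all tangent lines, whereas the paper's computation is deliberate — Section~\ref{sec:example} exists to demonstrate the method of Sections~\ref{sec:group_structure} and \ref{sec:P1}, and its explicit $Q_1,Q_2$ are reused in Example~\ref{ex:global_section} to write down actual global sections, which your abstract splitting does not provide. One small point you (like the paper) leave implicit in the final clause: reducing $k_1>k_2$ to the computed case uses $\phi_\ast\iota^\ast\mcL\cong\phi_\ast\mcL$ (immediate from $\phi\circ\iota=\phi$, cf.\ Proposition~\ref{prop:inverse}); with that remark your deduction that tangent lines have splitting gap $|k_1-k_2|-1\geq 2$ against a generic gap of at most $1$ is complete.
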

\begin{proof}
After a certain projective transformation, we may assume that $B_\phi$ is defined by $F:=x_0^2+x_1x_2=0$, where $[x_0{:}x_1{:}x_2]$ is a system of homogeneous coordinates. 
Then we have a line bundle $\mcL$ on $X$ given by the admissible pair $(\mcM,M)$, where $\mcM=\mcO_{\PP^2}^{\oplus2}$ and $M$ is the morphism represented by the matrix
\begin{align*}
	M:=\begin{pmatrix}
		x_0 & x_2 \\ x_1 & -x_0
	\end{pmatrix}
	:\mcM(-1)\to\mcM.
\end{align*}
Since $\dim\HH^0(X,\mcL)=\dim\HH^0(\PP^2,\mcM)=2$, we obtain either $\mcL\cong\mcO_X(0,1)$ or $\mcL\cong\mcO_X(1,0)$. 
Without loss of generality, we may assume that $\mcL\cong\mcO_X(0,1)$. 
If $L\subset\PP^2$ intersects transversally with $B_\phi$, then $L':=\phi^{-1}(L)\cong \PP^1$ and $\mcL|_{L'}\cong\mcO_{L'}(1)$. 
By Lemmas~\ref{lem:rest_push} and \ref{lem:p1}, we obtain (i). 

To prove (ii), we take a good representation of $(\mcM,M)$. 
Let $U_i$ be the affine open subset $\{x_i\ne0\}$ of $\PP^2$ for each $i=0,1,2$. 
Put $x_{ij}:=x_j/x_i$, and 
\begin{align*}
	G_{ij}&:=\begin{pmatrix} 1 & 0 \\ 0 & 1 \end{pmatrix}, \qquad
	M_i:=M|_{U_i}=\begin{pmatrix} x_{i0} & x_{i2} \\ x_{i1} & -x_{i0} \end{pmatrix} \quad (i,j=0,1,2), 
	\\[0.5em]
	A_0&:=\begin{pmatrix} 1 & 0 \\ 1 & 1 \end{pmatrix}, \qquad 
	A_1:=\begin{pmatrix} 1 & 0 \\ 0 & 1 \end{pmatrix}, \qquad 	
	A_2:=\begin{pmatrix} 0 & 1 \\ 1 & 0 \end{pmatrix}.
\end{align*}
Note that $\{G_{ij}\}$ is the set of transition functions of $\mcM$. 
Let $U_0^\natu$ be the affine open subset $U_0\cap\{-2x_0+x_1-x_2\ne0\}$, and put $U_i^\natu:=U_i$ for $i=1,2$. 
Put $G_{ij}^\natu:=A_i^{-1}G_{ij}A_j$, and $M_i^\natu:=A_i^{-1}M_iA_i$. 
Then $(\{G_{ij}^\natu\},\{M_i^\natu\})_{\{U_i^\natu\}}$ is a good representation of $(\mcM,M)$. 
Let $K_{ij}$ be the matrix $K_{ij}^{(1)+}$ in (\ref{eq:Kij^k}) for $(\{G_{ij}^{(1)}\},\{M_i^{(1)}\})_\gtU=(\{G_{ij}^\natu\},\{M_i^\natu\})_{\{U_i^\natu\}}$. 
We have 
\begin{align*}
	G_{12}^{(0)}=
	\begin{pmatrix}
		1 & 0 \\ 0 & x_{12}^{-1}
	\end{pmatrix},
	\qquad
	K_{12}=
	\begin{pmatrix}
		-x_{10} & x_{10}^2+x_{12} \\ 1 & -x_{10}
	\end{pmatrix}.
\end{align*}
Let $L$ be the tangent line at $[1{:}2b{:}2c]$, where $b,c$ satisfy $4bc=-1$. 
In this case, $L$ is defined by $x_0+cx_1+bx_2=0$. 
Then $L\subset U_1^\natu\cap U_2^\natu$, $L\cap U_1^\natu=\mathrm{Spec}\,\CC[x_{12}]$ and $L\cap U_2^\natu=\mathrm{Spec}\,\CC[x_{21}]$. 
By direct computation, we obtain the matrix $(K_{12}^nG_{12}^{(0)})|_L$ for $n\geq 0$ as follows:
\begin{align*}
	\frac{2^{n-1}}{x_{12}(bx_{12}-c)}
	\begin{pmatrix}
		x_{12}(bx_{12}-c)(b^nx_{12}^n+c^n) & (bx_{12}-c)^2(b^nx_{12}^n-c^n) \\ x_{12}(b^nx_{12}^n-c^n) & (bx_{12}-c)(b^nx_{12}^n+c^n)
	\end{pmatrix}. 
\end{align*}
Thus we have
\begin{align*}
	&\quad Q_1^{-1}\left(K_{12}^nG_{12}^{(0)}\right)|_L\,Q_2=2^n\begin{pmatrix} c^n & 0 \\ 0 & b^nx_{12}^{n-1} \end{pmatrix}, 
	\mbox{ where } 
	\\[0.5em]
	&Q_1^{-1}:=
	\begin{pmatrix}
		1 & -(bx_{12}-c) \\[0.5em] -\dfrac{b^nx_{12}^n-c^n}{2c^n(bx_{12}-c)} & \dfrac{b^nx_{12}^n+c^n}{2c^n}
	\end{pmatrix},
	\quad
	Q_2:=\begin{pmatrix} 1 & b-cx_{21} \\ 0 & 1 \end{pmatrix}. 
\end{align*}
Since $Q_1\in\GL(2,\CC[x_{12}])$ and $Q_2\in\GL(2,\CC[x_{21}])$, this implies that $\phi_\ast\mcL^{\otimes n}|_L\cong\mcO_L(n-1)\oplus\mcO_L$. 

For remaining lines $L_1:=\{x_1=0\}$ and $L_2:=\{x_2=0\}$, take the projective transformation $p:\PP^2\to\PP^2$ given by $[y_0{:}y_1{:}y_2]\mapsto [x_0{:}x_1{:}x_2]=[(y_1+y_2)/2{:}y_0-(y_1-y_2)/2{:}y_0+(y_1-y_2)/2]$. 
Then $p^\ast(x_0^2+x_1x_2)=y_0^2+y_1y_2$, $p^\ast L_1=\{2y_0-y_1+y_2=0\}$ and $p^\ast L_2=\{2y_0+y_1-y_2=0\}$. 
By the above argument, we obtain $\phi_\ast\mcL^{\otimes n}|_{L_i}\cong\mcO_{L_i}(n-1)\oplus\mcO_{L_i}$. 
\end{proof}

\begin{rem}
Let $\phi:X\to\PP^2$ be a non-singular double cover branched along smooth curve of degree $2r$. 
In \cite{schwarzenberger1961}, jumping lines of $\phi_\ast\mcL$ was computed for several line bundles $\mcL$ on $X$ in the case of $r=2$. 
Ottaviani \cite{ottaviani1984} and Vall\`es \cite{valles2009} studied jumping lines of the direct images of line bundles on $X$. 
\end{rem}

For a line bundle $\mcL$ on a non-singular double cover $X$ over $Y$, we obtain global sections of $\mcL$ by computing those of $\phi_\ast\mcL$ since $\Gamma(X,\mcL)\cong\Gamma(Y,\phi_\ast\mcL)$ under the isomorphism of Corollary~\ref{cor:corr}~\ref{cor:corr_morphism}. 

\begin{ex}\label{ex:global_section}
Let $\phi:X\to\PP^2$ be the non-singular double cover in Proposition~\ref{prop:schw}. 
Put $\mcL:=\mcO_X(4,2)$. 
Since $\phi^\ast\mcO_{\PP^2}(1)\cong\mcO_X(1,1)$, we have $\mcL\cong\mcO_X(2,0)\otimes\phi^\ast\mcO_{\PP^2}(2)$. 
Since $\phi_\ast\mcL$ is reflexive, the restriction $\Gamma(\PP^2,\phi_\ast\mcL)\to\Gamma(U_1\cup U_2,\phi_\ast\mcL)$ is isomorphism (cf. \cite{hartshorne1980}). 
Hence it is enough to compute the sections over $U_1\cup U_2$. 
By our proof of Proposition~\ref{prop:schw}, a transition function $G_{12}^\sim$ of $\phi_\ast\mcL$ between $U_1$ and $U_2$ is 
\begin{align*}
	G_{12}^\sim=x_{12}^2K_{12}^2G_{12}^{(0)}=
	\begin{pmatrix}
		-x_{10} & x_{10}^2+x_{12} \\ 1 & -x_{10}
	\end{pmatrix}^2
	\begin{pmatrix}
		x_{12}^2 & 0 \\ 0 & x_{12}
	\end{pmatrix}
\end{align*}
If a section ${}^t(s_1, s_2)\in\Gamma(U_2,\mcO_{\PP^2}^{\oplus2})$ satisfies $G_{12}^\sim{}^t(s_1,s_2)\in\Gamma(U_1,\mcO_{\PP^2}^{\oplus 2})$, then the section ${}^t(s_1,s_2)$ is of the form
\begin{align*}
	s_1&= 2c_1x_{20}^4+2c_2x_{20}^3x_{21}+c_3x_{20}^3+2c_4x_{20}^2x_{21}^2+(c_1+c_5)x_{20}^2x_{21} 
\\ &\quad
+(c_6+c_7)x_{20}^2 +(c_2+c_8)x_{20}x_{21}^2+(c_{9}+c_{10})x_{20}x_{21}
\\ &\quad
+(c_{11}+c_{12})x_{20}+c_{4}x_{21}^3+c_{13}x_{21}^2+c_{14}x_{21}+c_{15}, 
\\[0.5em]
s_2&=2c_1x_{20}^3+2c_2x_{20}^2x_{21}+c_3x_{20}^2+2c_4x_{20}x_{21}^2+c_5x_{20}x_{21}
\\ &\quad
+c_6x_{20}+c_8x_{21}^2+c_9x_{21}+c_{11}
\end{align*}
for $c_1,\dots,c_{15} \in\CC$. 
Thus $s_1,s_2$ as above give a global section of $\phi_\ast\mcL$, and the global section $v$ of $\mcL$ corresponding to it. 
Note that $\deg(s_1)=4$ and $\deg(s_2)=3$. 
On the other hand, the degree of $s_1^2-s_2^2F$ is $6$, which defines the image of the curve on $X$ defined by $v=0$. 
\end{ex}




\end{document}